 \numberwithin{equation}{section}
\theoremstyle{plain}
\newtheorem{thm}{Theorem}[section]
\newtheorem{cor}[thm]{Corollary}
\newtheorem{lem}[thm]{Lemma}
\newtheorem{prop}[thm]{Proposition}
\newtheorem{conj}[thm]{Conjecture}
\theoremstyle{definition}
\newtheorem{defn}[thm]{Definition}
\theoremstyle{remark}
\newtheorem{rem}[thm]{Remark}
\newcommand{\N}{\mathbb{N}}
\newcommand{\R}{\mathbb{R}}
\newcommand{\E}{\mathbb{E}}
\newcommand{\I}{\infty}
\newcommand{\tr}{\text{tr}}
\newcommand{\bp}{\begin{proof}[\ensuremath{\mathbf{Proof}}]}
\newcommand{\bs}{\begin{proof}[\ensuremath{\mathbf{Solution}}]}
\newcommand{\ep}{\end{proof}}
\begin{document}

\title{The eigenvalue problem of singular ergodic control}

\author{Ryan Hynd\thanks{This material is based upon work supported by the National Science Foundation under Grant No. DMS-1004733.}\\
Courant Institute of Mathematical Sciences\\
New York University\\
251 Mercer Street\\
New York, NY 10012-1185 USA} 

\maketitle

\begin{abstract}
We consider the problem of finding a real number $\lambda$ and a function $u$ satisfying the PDE
$$
\max\{\lambda -\Delta u -f,|Du|-1 \}=0, \quad x\in \R^n.
$$
Here $f$ is a convex, superlinear function.  We prove that there is a unique $\lambda^*$ such that the above PDE has a viscosity
solution $u$ satisfying $\lim_{|x|\rightarrow \infty}u(x)/|x|=1$. Moreover, we show that associated to $\lambda^*$ is a convex solution $u^*$ with 
$D^2u^*\in L^\infty(\R^n)$ and give a min-max formula for $\lambda^*$.  $\lambda^*$ has a probabilistic interpretation as being the least, long-time averaged (``ergodic") cost for a singular
control problem involving $f$. 
\end{abstract}

\tableofcontents

\section{Introduction}
In this paper, we address the following problem: find $\lambda\in \R$ such that the PDE
\begin{equation}\label{lamPDE}
\max\left\{\lambda  - \Delta u  - f, |Du| -1 \right\}=0, \quad x\in\R^n
\end{equation}
has a solution $u:\R^n\rightarrow \R$.  We call any such $\lambda$ an {\it eigenvalue}.  
Here is it assumed that $f\in C^\I(\R^n)$ is convex and satisfies the growth condition
\begin{equation}\label{hgrowth}
\lim_{|x|\rightarrow \infty}\frac{f(x)}{|x|}=+\infty. 
\end{equation}
These assumptions imply that $f$ is bounded from below and without any loss of generality it will also be assumed that $f$ is non-negative.

\par Our main result is

\begin{thm}\label{mainthm}
There is a unique $\lambda^*\in\R$ such that \eqref{lamPDE} has a viscosity solution $u\in C(\R^n)$ satisfying 
\begin{equation}\label{ugrowth}
\lim_{|x|\rightarrow \infty}\frac{u(x)}{|x|}=1. 
\end{equation}
Moreover, associated to this eigenvalue $\lambda^*\in\R$ is a convex solution $u^*$ of \eqref{lamPDE} satisfying \eqref{ugrowth}
such that $D^{2}u^*\in L^\I(\R^n)$. 
\end{thm}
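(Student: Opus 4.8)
The plan is to use the vanishing--discount method. For $\delta\in(0,\delta_0)$ I would study the discounted equation $\max\{\delta v_\delta-\Delta v_\delta-f,\ |Dv_\delta|-1\}=0$ in $\R^n$; its solvability in the class of linearly growing functions, together with a comparison principle, is standard once barriers are available. After normalizing $f$ so that $f\ge 1$ (which merely shifts $\lambda$ by a constant and forces $f(x)\ge c_0|x|$ globally for some $c_0>0$), one checks that $\sqrt{|x|^2+1}$ is a subsolution and that $\Phi+(n+\max_{\overline{B_1}}f)/\delta$ is a supersolution, where $\Phi$ is a fixed convex $C^{1,1}$ function with $\Phi(x)=|x|$ for $|x|\ge 1$. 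Comparison then yields, uniformly in $\delta$, $\sqrt{|x|^2+1}\le v_\delta(x)$ and $0\le\delta v_\delta(x)\le\delta|x|+C_1$ with $C_1=n+\max_{\overline{B_1}}f$. I would obtain convexity of $v_\delta$ from the stochastic--control representation of $v_\delta$ (or from a convexity--preserving approximation), and this has two consequences crucial for what follows: the free set $\{|Dv_\delta|<1\}$ lies in $\{\delta v_\delta\ge f\}\subseteq\{f(x)\le|x|+C_1\}$, which by \eqref{hgrowth} is a \emph{fixed} bounded set $B_{R_1}$ independent of $\delta$; and $m_\delta:=\min v_\delta$ is attained at some $x^\delta_*\in B_{R_1}$ (where $Dv_\delta=0$).

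Set $w_\delta:=v_\delta-m_\delta\ge 0$. Since $|Dv_\delta|\equiv 1$ on $\R^n\setminus B_{R_1}$, an integral curve $\dot\gamma=-Dv_\delta(\gamma)$ issued from any $x$ with $|x|\ge R_1$ moves at unit speed and decreases $v_\delta$ at unit rate until it first enters $B_{R_1}$, at some time $S$; then $S\le v_\delta(x)-m_\delta$ because $v_\delta\ge m_\delta$, while $S\ge|x|-R_1$ because the endpoint lies in $B_{R_1}$. Together with $v_\delta(x)-m_\delta\le|x-x^\delta_*|\le|x|+R_1$ this gives $|x|-R_1\le w_\delta(x)\le|x|+R_1$ for $|x|\ge R_1$, uniformly in $\delta$. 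Hence $\{w_\delta\}$ is convex, Lipschitz with constant $1$, and locally equibounded, and $\{\delta m_\delta\}\subseteq[0,C_1]$; I would extract $\delta_k\to 0$ with $\delta_k m_{\delta_k}\to\lambda^*$ and $w_{\delta_k}\to u^*$ locally uniformly. Passing to the limit in $\max\{\delta_k w_{\delta_k}+\delta_k m_{\delta_k}-\Delta w_{\delta_k}-f,\ |Dw_{\delta_k}|-1\}=0$ by stability of viscosity solutions (using $\delta_k w_{\delta_k}\to 0$ locally uniformly) shows $u^*$ is a convex viscosity solution of \eqref{lamPDE} with $\lambda=\lambda^*$, and the two--sided bound above forces \eqref{ugrowth}.

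The point I expect to be the real obstacle is the uniform semiconcavity estimate $0\le D^2v_\delta\le C\,I$ with $C$ independent of $\delta$. The lower bound is convexity; for the upper bound I would run the maximum principle on $x\mapsto\lambda_{\max}(D^2v_\delta(x))$ (equivalently on second difference quotients of $v_\delta$). On the open free set, convexity of $f$ gives $\delta v_{ee}-\Delta v_{ee}=f_{ee}\ge 0$, and interior elliptic estimates bound $D^2v_\delta$ there in terms of $\delta\|v_\delta\|_{L^\infty(B_{R_1})}+\|f\|_{C^{0,\alpha}(B_{R_1})}\le C$; on the contact set, differentiating $|Dv_\delta|^2=1$ shows $Dv_\delta$ is a null eigenvector of $D^2v_\delta$ and that at an interior maximum of $\lambda_{\max}(D^2v_\delta)$ the relevant directional derivative is nonpositive. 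The delicate step is matching these across the free boundary $\partial\{|Dv_\delta|=1\}$, which is confined to the fixed bounded region $B_{R_1}$; this localization — itself a consequence of convexity and \eqref{hgrowth} — is what makes the estimate uniform in $\delta$. Granting it, $D^2v_{\delta_k}$ is bounded, so by Arzel\`a--Ascoli $w_{\delta_k}\to u^*$ in $C^1_{\mathrm{loc}}$ with $D^2u^*\in L^\infty(\R^n)$, which finishes the existence part.

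For uniqueness of $\lambda^*$: if $(\lambda,u)$ is any eigenpair with $u$ satisfying \eqref{ugrowth}, then \eqref{ugrowth} forces $u$ to be coercive, so $m_u:=\min u$ is finite; since $\lambda-\Delta u-f\le 0$ and $|Du|\le 1$, the shifted function $u+\lambda/\delta-m_u$ is a supersolution of the $\delta$--equation, so $v_\delta\le u+\lambda/\delta-m_u$ by comparison, and evaluating at a minimum point of $u$ gives $\delta m_\delta\le\lambda$; letting $\delta=\delta_k\to 0$ yields $\lambda^*\le\lambda$. Thus $\lambda^*$ is the smallest eigenvalue admitting a solution with the prescribed growth. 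The reverse inequality $\lambda\le\lambda^*$ — the more delicate direction — I would deduce from the min--max formula that the construction produces: $\lambda^*$ equals the supremum over convex $\phi$ with $|D\phi|\le 1$ of $\inf_{\R^n}(\Delta\phi+f)$, a value attained by $u^*$ and bounded below by $\lambda$ for every such eigenpair, whence $\lambda=\lambda^*$.
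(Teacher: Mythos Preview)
Your vanishing-discount scheme, the barriers, convexity of $v_\delta$, and the key observation that the free set $\{|Dv_\delta|<1\}$ sits in a fixed ball independent of $\delta$ all match the paper's route. There are, however, two genuine gaps.

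\textbf{The uniform $D^2$ bound.} You rightly flag this as the obstacle, but the argument you sketch does not close. Interior Schauder estimates on the open free set $\Omega_\delta=\{|Dv_\delta|<1\}$ degenerate as one approaches $\partial\Omega_\delta$; differentiating $|Dv_\delta|^2=1$ on the contact set presupposes the very $C^2$ regularity you are after; and ``matching across the free boundary'' is not a maximum-principle step one can execute without first knowing that a maximum of $\lambda_{\max}(D^2v_\delta)$ is attained at a point where the equation is usable. The paper gets the bound by a different device altogether: it penalizes the constraint, studying the smooth semilinear problem $\delta v-\Delta v+\beta_\epsilon(|Dv|^2-1)=f$ on a large ball, and runs Bernstein-type computations (after Evans and Wiegner) to bound $|D^2v_\epsilon|$ locally by a constant depending only on $\delta\|v_\epsilon\|_{L^\infty}+\|Dv_\epsilon\|_{L^\infty}^2$, hence uniform in both $\epsilon$ and $\delta$. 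Letting $\epsilon\to 0$ then yields the bound for $u_\delta$. Your outline contains no substitute for this mechanism.

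\textbf{Uniqueness.} Your proof that $\lambda^*\le\lambda$ for any eigenpair $(\lambda,u)$ is correct (and is not the paper's argument). But the reverse inequality via the min-max formula is unjustified: you claim the supremum $\sup_\phi\inf_x(\Delta\phi+f)$ is ``bounded below by $\lambda$ for every such eigenpair,'' yet the admissible $\phi$ are $C^2$ (and in your version convex), whereas a general viscosity solution $u$ with eigenvalue $\lambda$ is merely continuous---there is no competitor $\phi$ certifying $\lambda\le\lambda_-$. The paper instead proves a direct comparison of eigenvalues: if $u$ is a subsolution with eigenvalue $\lambda$ and $v$ a supersolution with eigenvalue $\mu$, both with growth \eqref{ugrowth}, one doubles variables and maximizes $\epsilon u(x)-v(y)-\frac{1}{2\eta}|x-y|^2$; the factor $\epsilon<1$ forces $|Dv|<1$ at the maximum, activating the elliptic alternative for $v$, and one obtains $\epsilon\lambda\le\mu$, hence $\lambda\le\mu$. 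Applied with $v=u^*$ this gives $\lambda\le\lambda^*$ for \emph{any} continuous solution $u$, with no regularity or min-max input.
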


\par  Our approach to proving this theorem is as follows.  We use an idea inspired from the maximum principle for solutions of second order elliptic PDE to establish a comparison principle among eigenvalues.  Then we present a simple technique for approximating the value of an eigenvalue. It is based on studying the related elliptic PDE

\begin{equation}\label{deltaPDE}
\max\left\{\delta u  - \Delta u  - f, |Du| -1 \right\}=0, \quad x\in\R^n
\end{equation}
for $\delta$ small and positive. In particular, we prove the following

\begin{thm}\label{deltathm}
(i) For each $\delta>0$, there is a unique viscosity solution $u_\delta$ of \eqref{deltaPDE} satisfying \eqref{ugrowth}.  \\
(ii) There is a universal constant $L$ such that
$$
0\le D^2u_\delta(x)\le L, \quad \text{a.e.}\; x\in \R^n
$$
for $0<\delta\le 1$. \\
(iii) For every $x_0\in \R^n$, there is a sequence $(\delta_k)_{k\in\N}$ of positive numbers tending to $0$ such that, as $k\rightarrow \I$,
$$
\delta_k u_{\delta_k}(x_0)\rightarrow \lambda^*
$$
and $u_{\delta_k}-u_{\delta_k}(x_0)$ converges to a solution $u^*$ of \eqref{lamPDE} in $C^1_{\text{loc}}(\R^n)$ that satisfies \eqref{ugrowth}.
\end{thm}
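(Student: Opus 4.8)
The plan is to establish the three assertions in order, with the Hessian estimate in (ii) as the technical core. For (i) I would first prove a comparison principle for \eqref{deltaPDE} in the class of continuous functions of at most linear growth, by the standard viscosity doubling-of-variables argument with a penalization $\varepsilon\sqrt{1+|x|^2}$ localizing the maximum; the zeroth-order term $\delta u$ is strictly increasing in $u$, so no strict sub/supersolutions are needed, and the term $|Du|-1$ is handled by the Jensen--Ishii lemma since the $\max$ preserves degenerate ellipticity. Existence then follows by Perron's method: $\underline u\equiv 0$ is a subsolution because $f\ge 0$, and a vertically shifted smoothed cone $\overline u_\delta(x):=\psi_\delta(x)+C_0/\delta$ — with $\psi_\delta(x)=|x-x_0|$ for $|x-x_0|\ge1$, smoothed inside $B_1(x_0)$, and $C_0$ a fixed constant bounding $\Delta\psi_\delta+f$ on $B_1(x_0)$ — is a supersolution; both lie in the growth class \eqref{ugrowth}, so Perron produces the unique $u_\delta$ and moreover $0\le u_\delta\le\overline u_\delta$. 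Comparing additionally with the subsolution $(|x|-R_0)^+$ — which, thanks to \eqref{hgrowth}, really is a subsolution of \eqref{deltaPDE} for $\delta\le1$ once $R_0$ is large, no smooth function being able to touch its convex corner from above — gives $u_\delta(x)\ge(|x|-R_0)^+$, so $u_\delta$ satisfies \eqref{ugrowth} uniformly in $\delta$, and $0\le\delta u_\delta(x_0)\le\delta\overline u_\delta(x_0)=C_0$ independently of $\delta\in(0,1]$; this will feed part (iii).

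For the convexity half of (ii) I would compare $u_\delta$ with its lower convex envelope $\hat u_\delta$, which again has linear growth since $0\le\inf u_\delta\le\hat u_\delta\le u_\delta\le\overline u_\delta$. The key point is that $\hat u_\delta$ is a supersolution of \eqref{deltaPDE}: writing $\hat u_\delta(x^*)=\sum_i\lambda_iu_\delta(z_i)$ with $\sum_i\lambda_iz_i=x^*$ and $\hat u_\delta$ affine of slope $p$ on the simplex spanned by the $z_i$, one is done immediately if $|p|\ge1$, while if $|p|<1$ the supersolution property of $u_\delta$ at each $z_i$ (where the affine function of slope $p$ touches $u_\delta$ from below) gives $\delta u_\delta(z_i)-f(z_i)\ge0$; averaging with the weights $\lambda_i$ and using convexity of $f$ yields $\delta\hat u_\delta(x^*)-f(x^*)\ge0$, which suffices because any test function touching $\hat u_\delta$ from below at $x^*$ has nonpositive Laplacian there. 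Comparison then forces $\hat u_\delta=u_\delta$, i.e. $D^2u_\delta\ge0$.

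The semiconcavity bound is the step I expect to be the main obstacle. Where the first constraint in \eqref{deltaPDE} holds with equality one has $\Delta u_\delta=\delta u_\delta-f$; convexity gives $\Delta u_\delta\ge0$, hence $f\le\delta u_\delta\le\delta|x-x_0|+C_0\le|x|+C$ for $\delta\le1$, and superlinearity of $f$ then confines this set — in particular $\{|Du_\delta|<1\}$ — to a fixed ball $B_R$, on which $0\le D^2u_\delta\le(\Delta u_\delta)\,\mathrm{Id}\le(\delta u_\delta)\,\mathrm{Id}\le L_0\,\mathrm{Id}$ uniformly in $\delta\le1$, the bound $D^2u_\delta\ge0$ upgrading the Laplacian bound to a full Hessian bound. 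It remains to propagate this into the set $\{|Du_\delta|=1\}$: on its interior, differentiating $|Du_\delta|^2\equiv1$ twice gives the Bochner-type identity $Du_\delta\cdot D(\Delta u_\delta)=-|D^2u_\delta|^2\le0$, so $\Delta u_\delta$ is nonincreasing along the gradient flow $\dot X=Du_\delta(X)$; convexity forces $u_\delta$ to increase at unit rate along that flow, so its orbits escape to infinity and, run backwards, re-enter $\bar B_R$, whence a maximum-principle argument gives $\sup_{\R^n}\Delta u_\delta=\sup_{\bar B_R}\Delta u_\delta\le L_0$ and thus $0\le D^2u_\delta\le L_0\,\mathrm{Id}$ a.e.\ with $L:=L_0$ independent of $\delta\le1$. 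Making this last step rigorous — $u_\delta$ being a priori only $C^{1,1}$ off the contact set — is the real work; I would carry it out via second difference quotients or a regularization respecting the $\max$ structure, or else read the semiconcavity off the singular-control representation of $u_\delta$, at the cost of controlling the tail of the optimally controlled diffusion against the superlinear $f$.

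Finally, for (iii), set $v_\delta:=u_\delta-u_\delta(x_0)$. By (ii) and $|Dv_\delta|\le1$ the family $\{v_\delta:0<\delta\le1\}$ is bounded in $C^{1,1}_{\text{loc}}$ with $|v_\delta(x)|\le|x-x_0|$ and $v_\delta(x_0)=0$, so along some $\delta_k\to0$ one has $v_{\delta_k}\to u^*$ in $C^1_{\text{loc}}$ with $u^*$ convex, $|Du^*|\le1$ and $D^2u^*\le L$; passing to a further subsequence, $\delta_ku_{\delta_k}(x_0)\to\lambda^*\in[0,C_0]$. Since $\delta_ku_{\delta_k}(x)=\delta_kv_{\delta_k}(x)+\delta_ku_{\delta_k}(x_0)\to\lambda^*$ locally uniformly, stability of viscosity solutions gives that $u^*$ solves \eqref{lamPDE} with $\lambda=\lambda^*$. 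The bound $|v_{\delta_k}(x)|\le|x-x_0|$ yields $\limsup_{|x|\to\infty}u^*(x)/|x|\le1$, while the matching lower bound follows from $u^*$ being convex and non-affine (non-affine because $f$ is non-constant) with $\{|Du^*|<1\}$ bounded — again by superlinearity, since there $\Delta u^*=\lambda^*-f\ge0$ — which pins the linear growth rate at $1$, so $u^*$ satisfies \eqref{ugrowth}. By the comparison among eigenvalues underlying Theorem \ref{mainthm}, $\lambda^*$ is the same number for every such subsequence and every choice of $x_0$, namely the unique eigenvalue admitting a solution with growth \eqref{ugrowth}.
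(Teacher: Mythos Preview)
Your outline for parts (i) and (iii) is essentially the paper's: comparison via doubling plus Perron with the barriers $(|x|-K)^+$ and $K/\delta+\text{(smoothed cone)}$, and then Arzel\`a--Ascoli compactness and viscosity stability to pass to the limit. The differences, and the one genuine gap, are all in part (ii).

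\medskip
\textbf{Convexity.} The paper proves convexity by a Korevaar-type convexity maximum principle: it studies $\epsilon u\bigl(\tfrac{x+y}{2}\bigr)-\tfrac12(u(x)+u(y))$, locates a maximum, and uses a variable-\emph{quadrupling} version of the Jensen--Ishii lemma to make the heuristic rigorous. Your convex-envelope approach (in the spirit of Alvarez--Lasry--Lions) is a legitimate alternative, but your justification contains an error: the sentence ``any test function touching $\hat u_\delta$ from below at $x^*$ has nonpositive Laplacian there'' is false --- a $C^2$ function touching a convex function from below can have arbitrarily large positive Hessian in directions transverse to the face on which $\hat u_\delta$ is affine. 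The correct argument does not use this; instead one \emph{slides} the test paraboloid along the affine face to each extreme point $z_i$ (where $\hat u_\delta(z_i)=u_\delta(z_i)$), obtaining $(p,D^2\varphi(x^*))\in\overline J^{2,-}u_\delta(z_i)$, applies the supersolution inequality for $u_\delta$ there, and averages with weights $\lambda_i$; convexity of $f$ makes the averaged inequality go the right way. With that repair your route is fine and arguably cleaner than the paper's quadrupling argument.

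\medskip
\textbf{Uniform upper Hessian bound.} This is where your proposal has a real gap. Your plan is to use $\Delta u_\delta=\delta u_\delta-f$ plus convexity on $\{|Du_\delta|<1\}$ (this part is correct, and the paper does the same), and then to propagate the bound into $\{|Du_\delta|=1\}$ via the Bochner identity $Du_\delta\cdot D(\Delta u_\delta)=-|D^2u_\delta|^2$ and a gradient-flow argument. As you yourself note, this is purely formal: on the eikonal set $u_\delta$ is only known to be $C^{1,1}$ (and that with a $\delta$-dependent constant), so differentiating $|Du_\delta|^2\equiv 1$ twice and running a flow along $Du_\delta$ are not justified. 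None of the alternatives you list (second differences, a regularization respecting the $\max$, the control representation) is worked out, and each hides real difficulties. The paper takes a completely different route here, and it is the technical heart of the argument: for $|x|$ large it proves a Lipschitz-extension formula $u_\delta(x)=\min_{y\in\overline{\Omega_\delta}}\{u_\delta(y)+|x-y|\}$ and reads off $D^2u_\delta(x)\le 1/(|x|-C)$; for bounded $x$ --- in particular across the transition region near $\partial\Omega_\delta$, where both your gradient-flow heuristic and the Lipschitz-extension bound degenerate --- it runs a \emph{penalty method}, solving $\delta v-\Delta v+\beta_\epsilon(|Dv|^2-1)=f$ on a large ball and deriving interior $W^{2,\infty}$ estimates of the form
\[
|D^2v_\epsilon|\le C\bigl(1+\|\delta v_\epsilon\|_{L^\infty}+\|Dv_\epsilon\|_{L^\infty}^2\bigr)
\]
uniform in $\epsilon\in(0,1)$ via Bernstein-type maximum-principle computations (after Evans and Wiegner). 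Passing $\epsilon\to0$ gives a local $C^{1,1}$ bound on $u_\delta$ depending only on $\|\delta u_\delta\|_{L^\infty}$ and the Lipschitz constant, hence uniform in $\delta\in(0,1]$. Without this step (or a rigorous substitute), part (ii) is incomplete, and the $C^1_{\text{loc}}$ convergence asserted in (iii) --- which is exactly what requires the uniform Hessian bound --- is unjustified.
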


\par Along the way to proving Theorem \ref{mainthm}, we will make a few interesting observations.  The first, which was already known, is that when $f$ is rotational $u_\delta$ (and thus $u^*)$ are also rotational
and in particular of class $C^2(\R^n)$.  This result was first proved in \cite{Lu}.  However, our argument is elegant and does not require an explicit construction of the solution. 

\begin{thm}\label{RotReg}
Suppose that $f$ is rotationally symmetric i.e. $f(x)=f_0(|x|)$ for some non-decreasing, convex, superlinear function $f_0: [0,\I)\rightarrow \R.$  Then $u_\delta$ is rotationally symmetric
and belongs to the space $C^2(\R^n)$.
\end{thm}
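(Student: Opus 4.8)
The strategy is to exploit uniqueness from Theorem 1.3(i). Suppose $f(x) = f_0(|x|)$. For any rotation $R \in O(n)$, define $u_\delta^R(x) := u_\delta(Rx)$. Since $f$ is rotationally symmetric, $f(Rx) = f(x)$, and the operators $\Delta$ and $|D\cdot|$ are invariant under orthogonal changes of variable, so $u_\delta^R$ is again a viscosity solution of \eqref{deltaPDE}. Moreover $u_\delta^R$ satisfies the growth condition \eqref{ugrowth} because $|Rx| = |x|$. By the uniqueness in Theorem 1.3(i), $u_\delta^R = u_\delta$ for every $R \in O(n)$, which is exactly the statement that $u_\delta$ is rotationally symmetric; write $u_\delta(x) = v(|x|)$ for some function $v : [0,\infty) \to \R$.

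The second half is to upgrade to $C^2(\R^n)$ regularity. Write $r = |x|$. From Theorem 1.3(ii) we already know $u_\delta$ is globally Lipschitz with $|Du_\delta|\le 1$, semiconvex, and $D^2 u_\delta \in L^\infty$, so $v$ is Lipschitz, convex and $v'' \in L^\infty$ away from $0$; in particular $u_\delta \in C^{1,1}_{\mathrm{loc}}$ and the PDE holds a.e. In radial coordinates, at points where $v' = v'(r) < 1$ one has $|Du_\delta| < 1$, so the gradient constraint is inactive and the equation reduces to the linear elliptic ODE
\begin{equation}\label{radialODE}
\delta v - v'' - \tfrac{n-1}{r} v' = f_0(r),
\end{equation}
on the open set $\{v' < 1\}$, while on the (relatively closed) set $\{v' = 1\}$ the function $v$ is affine, $v(r) = r + \text{const}$, which is smooth. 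On $\{v'<1\}$, bootstrapping \eqref{radialODE} with $f_0 \in C^\infty$ gives $v \in C^\infty$ there. The only place regularity can fail is the free boundary $\partial\{v' = 1\}$ and the origin; at such a transition point $r_0$ one has $v'(r_0) = 1$ from one side and $v' < 1$ from the other, and since $v'' \in L^\infty$ forces $v \in C^1$, the two pieces match in value and first derivative. The key point is that the ODE \eqref{radialODE} then determines $v''$ one-sidedly as $\delta v(r_0) - f_0(r_0) - \frac{n-1}{r_0}$, while on the affine side $v'' = 0$; to get $v \in C^2$ across $r_0$ one must show these agree, i.e. $\delta v(r_0) - f_0(r_0) - \frac{n-1}{r_0} = 0$ at the free boundary. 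This is precisely the content of the viscosity solution property at $r_0$: testing with smooth functions from above and below forces $\max\{\delta v(r_0) - v''(r_0^{\pm}) - \frac{n-1}{r_0}, 0\} = 0$ and, because $v$ is convex with $v'$ non-decreasing, $\delta v(r_0) - f_0(r_0) - \frac{n-1}{r_0}\le 0$; combined with the affine side giving $v'' = 0$ one concludes the second derivatives match. At $r = 0$, symmetry forces $v'(0) = 0 < 1$, so a neighborhood of the origin lies in $\{v' < 1\}$, and one checks via the standard radial-ODE analysis (or by noting $u_\delta$ solves the linear equation $\delta u_\delta - \Delta u_\delta = f$ near $0$ and invoking elliptic regularity) that $u_\delta \in C^2$ there.

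The main obstacle is the $C^2$ matching at the free boundary $\partial\{v'=1\}$: one has to rule out a corner in $v'$, i.e. a jump in $v''$, and the argument must genuinely use the viscosity (sub/super-solution) inequalities rather than just the a.e. equation, since an a.e. solution could in principle have a kink. The clean way is to observe that on the free-boundary set $\{v'=1\}$, a point $r_0$ in its interior already has $u_\delta$ affine, hence $\Delta u_\delta = \frac{n-1}{r_0}$ there, and the subsolution inequality $\delta v(r_0) - \frac{n-1}{r_0} - f_0(r_0) \le 0$ combined with continuity of all the terms extends this inequality up to $\partial\{v'=1\}$; on the other side \eqref{radialODE} holds up to the boundary by continuity, giving the same value of $v''$ in the limit. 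I expect the remaining details — regularity at the origin and the elementary ODE bootstrap — to be routine.
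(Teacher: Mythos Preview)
Your proposal is correct and follows essentially the same route as the paper: radial symmetry via uniqueness plus rotation-invariance of the equation, then a matching argument for the one-sided limits of $v''$ at the free boundary using the linear ODE from the $\{v'<1\}$ side and the subsolution inequality $\delta v - \tfrac{n-1}{r} - f_0 \le 0$ from the affine side, together with convexity to force $v''(r_0^-)=0$. The paper's write-up is slightly cleaner in that it observes at the outset that convexity of $v$ makes $v'$ non-decreasing, so $\{v'<1\}=[0,r_0)$ for a \emph{single} $r_0$ and there is exactly one transition point to analyze; you leave this implicit, but your argument uses the same ingredients.
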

Our second observation, which is original, is that the eigenvalue $\lambda^*$ can be characterized via a min-max formula.

\begin{thm}\label{MinMaxThm}Define 
$$
\lambda_-=\sup\left\{\inf_{x\in\R^n}\left\{\Delta \phi(x) + f(x)\right\} :  \phi\in C^2(\R^n), |D\phi|\le 1 \right\}
$$
and 
$$
\lambda_+=\inf\left\{\sup_{|D\psi(x)|<1}\left\{\Delta \psi(x) + f(x)\right\} :  \psi\in C^2(\R^n), \liminf_{|x|\rightarrow \infty}\frac{\psi(x)}{|x|}\ge 1 \right\}.
$$
(i) Then
$$
\lambda_-=\lambda^*\le \lambda_+.
$$
(ii) Moreover, if there is a $C^2(\R^n)$ supersolution $\psi^*$ of \eqref{lamPDE} with eigenvalue $\lambda^*$, such that $$\liminf_{|x|\rightarrow \infty}\frac{\psi^*(x)}{|x|}\ge 1,$$ then $\lambda^*=\lambda_+.$
\end{thm}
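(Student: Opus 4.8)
The plan is to prove (i) by separately establishing $\lambda_-=\lambda^*$ and $\lambda^*\le\lambda_+$, and then to read off (ii) in one line. The organizing observation is that a function $\phi\in C^2(\R^n)$ with $|D\phi|\le1$ is precisely a classical subsolution of \eqref{lamPDE} with eigenvalue $\mu_\phi:=\inf_{x}(\Delta\phi(x)+f(x))$, while an admissible competitor $\psi$ in the definition of $\lambda_+$ is precisely a classical supersolution of \eqref{lamPDE} with eigenvalue $\nu_\psi:=\sup_{|D\psi(x)|<1}(\Delta\psi(x)+f(x))$: at points where $|D\psi|<1$ the supersolution inequality forces $\Delta\psi+f\le\nu_\psi$, and where $|D\psi|\ge1$ it holds automatically. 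Thus $\lambda_-$ is the supremum of the eigenvalues of bounded-gradient subsolutions and $\lambda_+$ the infimum of the eigenvalues of supersolutions that grow at least linearly with unit slope.

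For $\lambda_-\le\lambda^*$ and $\lambda^*\le\lambda_+$ I would invoke the comparison principle among eigenvalues that underlies Theorem \ref{mainthm}: a subsolution $u$ of eigenvalue $\mu$ and a supersolution $v$ of eigenvalue $\lambda$ with $\liminf_{|x|\to\infty}(v(x)-u(x))/|x|\ge0$ satisfy $\mu\le\lambda$. Applied with the subsolution $\phi$ (which has $\limsup_{|x|\to\infty}\phi(x)/|x|\le1$ because $|D\phi|\le1$) and the supersolution $u^*$ (which satisfies \eqref{ugrowth}), this gives $\mu_\phi\le\lambda^*$ for every such $\phi$, hence $\lambda_-\le\lambda^*$ upon taking the supremum. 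Applied the other way with $u^*$ now viewed as a subsolution (it is a solution, and satisfies \eqref{ugrowth}) and an admissible $\psi$ as supersolution (it has $\liminf_{|x|\to\infty}\psi(x)/|x|\ge1$, so $\psi-u^*$ grows like $-o(|x|)$), it gives $\lambda^*\le\nu_\psi$ for every such $\psi$, hence $\lambda^*\le\lambda_+$ upon taking the infimum. The only thing to check is that the comparison principle is available in this generality, namely for sub- and supersolutions of possibly different eigenvalues and with merely one-sided control of growth (a $\phi$ with only $|D\phi|\le1$, or a $\psi$ growing faster than linearly); the standard device of adding a penalization $\eta\sqrt{1+|x|^2}$ and letting $\eta\to0$ localizes the relevant extremum and reduces matters to the situation handled there.

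For the reverse inequality $\lambda^*\le\lambda_-$ I would produce near-optimal competitors by mollifying the convex solution $u^*$ supplied by Theorem \ref{mainthm}. Set $u^*_\varepsilon:=u^*\ast\rho_\varepsilon$. Since $|Du^*|\le1$ we get $|Du^*_\varepsilon|\le1$; since $u^*$ is convex and, being $C^{1,1}$, solves \eqref{lamPDE} classically almost everywhere, we have $\Delta u^*\ge0$ and $\Delta u^*\ge\lambda^*-f$ a.e., hence $\Delta u^*_\varepsilon\ge0$ and $\Delta u^*_\varepsilon\ge\lambda^*-f\ast\rho_\varepsilon$. The key is that the mollification error in $f$ is harmless away from a bounded set: on $B_R$ one has $f\ast\rho_\varepsilon-f\le C(R)\varepsilon^2$ by the smoothness of $f$, so $\Delta u^*_\varepsilon+f\ge\lambda^*-C(R)\varepsilon^2$ there, whereas for $|x|\ge R$ one uses only $\Delta u^*_\varepsilon\ge0$ together with $f(x)\ge m_R:=\inf_{|y|\ge R}f\to\infty$ (by \eqref{hgrowth}), so that $\Delta u^*_\varepsilon+f\ge m_R\ge\lambda^*$ once $R$ is fixed large enough. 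Therefore $\inf_{\R^n}(\Delta u^*_\varepsilon+f)\ge\lambda^*-C(R)\varepsilon^2$, so $\lambda_-\ge\lambda^*-C(R)\varepsilon^2\to\lambda^*$ as $\varepsilon\to0$. Together with the previous step this yields $\lambda_-=\lambda^*$.

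Part (ii) is then immediate: if $\psi^*\in C^2(\R^n)$ is a supersolution of \eqref{lamPDE} with eigenvalue $\lambda^*$, then by definition $\Delta\psi^*+f\le\lambda^*$ wherever $|D\psi^*|<1$, so $\sup_{|D\psi^*(x)|<1}(\Delta\psi^*(x)+f(x))\le\lambda^*$; if in addition $\liminf_{|x|\to\infty}\psi^*(x)/|x|\ge1$, then $\psi^*$ is an admissible competitor in the definition of $\lambda_+$ with value $\le\lambda^*$, whence $\lambda_+\le\lambda^*$, and combined with (i) this forces $\lambda_+=\lambda^*$. The step I expect to be most delicate is $\lambda^*\le\lambda_-$: a naive global mollification of $u^*$ fails because $f\ast\rho_\varepsilon-f$ need not be uniformly small when $f$ grows fast, and it is precisely the convexity of $u^*$ --- which makes $\Delta u^*_\varepsilon\ge0$, so that the unbounded region is controlled for free by the growth \eqref{hgrowth} of $f$ --- that rescues the argument; a secondary concern, as noted above, is to have the comparison principle among eigenvalues in a form covering the non-standard sub- and supersolutions that arise in the two applications.
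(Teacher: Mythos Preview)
Your proposal is correct and follows essentially the same strategy as the paper: both use the comparison principle for eigenvalues (packaged in the paper as formulas \eqref{maxform} and \eqref{minform}) to obtain $\lambda_-\le\lambda^*\le\lambda_+$, and then mollify the convex $C^{1,1}$ solution $u^*$ to produce admissible $C^2$ competitors showing $\lambda^*\le\lambda_-$. The only cosmetic difference is in the localization step for $\lambda^*\le\lambda_-$: you control the far field directly via $\Delta u^*_\varepsilon\ge0$ together with $f(x)\to\infty$, whereas the paper uses the uniform bound $0\le D^2u^\varepsilon\le L$ and the superlinearity of $f$ to argue that the infimum of $\Delta u^\varepsilon+f$ over $\R^n$ is attained inside a fixed ball $B_R$ (independent of $\varepsilon$), after which $|f^\varepsilon-f|=o(1)$ on $B_R$ finishes the job.
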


\par As far as we know, this work is the first to consider the eigenvalue problem as posed above.  However, a big part of our motivation was the work of Menaldi et. al. \cite{M} who studied a closely related problem arising in stochastic control theory.  
With regards to the framework we present,  they used probabilistic arguments to build an eigenvalue. In this paper, we establish the existence of a {\it unique} eigenvalue and obtain a better regularity result than what was obtained in \cite{M} 
as it does not require any special assumptions on $f$; we only require convexity and superlinear growth.  Moreover, we have employed methods that are entirely analytic and use nothing from probability theory.   Before, carrying out our analysis let us see how this eigenvalue problem arises in the theory of singular stochastic control.
\newline 
\newline
\noindent
{\bf Probabilistic interpretation of the eigenvalue}.  Let $(\Omega, {\mathcal F}, \mathbb{P})$ be 
a probability space with $n$-dimensional Brownian motion $(W(t),t\ge0)$. Also set
$$
X^\nu(t):=\sqrt{2} W(t) + \nu(t), \; t\ge 0
$$
where  $\nu$ is an $\R^n$ valued control process (adapted to 
the filtration generated by $W$) that satisfies

$$
\begin{cases}
\nu(0)=0\;\text{a.s.}\\
t\mapsto \nu(t) \; \text{is left continuous a.s.}\\
|\nu|(t):=TV_{\nu}[0,t)<\infty, \;\text{for all}\; t> 0\;\text{ a.s.} 
\end{cases}.\footnote{$TV_g[a,b)$ denotes the total variation of $g$ on the interval $[a,b)$.}
$$
We say $\nu$ is a {\it singular control} as it may have sample paths that may not be absolutely continuous with respect to Lebesgue measure on $[0,\infty)$. 

\par  An optimization problem of interest is to find a singular control $\nu$ that minimizes the quantity
\begin{equation}\label{ergQ}
\limsup_{t\rightarrow \infty}\frac{1}{t}\left\{\E\int^{t}_{0}f(X^\nu(s))ds + |\nu|(t)\right\}.
\end{equation}
As \eqref{ergQ} is a ``long-time" average, we interpret this problem as one of {\it singular ergodic control}.

\par To see how \eqref{lamPDE} is related to the control problem described above, we suppose that there is $\lambda\in\R$ such that
\eqref{lamPDE} has a convex solution $u\in C^2(\R^n)$.  Let $\nu$ be a singular control process. According to Ito's rule for semi-martingales (pp. 278-301 \cite{M}),
\begin{eqnarray}
\E u(X^\nu(t)) & = & u(x) + \E\int^{t}_{0}\Delta u(X^\nu(s))ds + \E\int^{t}_{0}Du(X^\nu(s))\cdot d\nu(s) \nonumber \\
                         &   & + \sum_{0\le s<t}\E\int^{t}_{0}\left[u(X^\nu(s+))-u(X^\nu(s)) - Du(X^\nu(s))\cdot (X^\nu(s+) - X^\nu(s)) \right] \nonumber\\
                         &\ge & u(x) +  t\lambda - \E\int^{t}_{0}f(X^\nu(s))ds - \E\int^{t}_{0}|Du(X^\nu(s))| d|\nu|(s) \nonumber \\
                          &\ge & u(x) + t\lambda - \E\int^{t}_{0}f(X^\nu(s))ds - |\nu|(t) \nonumber.
\end{eqnarray}
Thus
\begin{equation}\label{passlimit?}
\lambda\le \frac{1}{t}\left\{\E\int^{t}_{0}f(X^\nu(s))ds + |\nu|(t)\right\} + \frac{\E u(X^\nu(t)) -u(x) }{t}, \quad t>0.
\end{equation}
\par We would like to conclude that
\begin{equation}\label{lamlessthan}
\lambda\le \limsup_{t\rightarrow \infty}\frac{1}{t}\left\{\E\int^{t}_{0}f(X^\nu(s))ds + |\nu|(t)\right\}.
\end{equation}
Suppose that the right hand side of the inequality \eqref{lamlessthan} is finite or else \eqref{lamlessthan} clearly holds. In this case,
$$
\limsup_{t\rightarrow \infty}\frac{1}{t}\int^{t}_{0}\E f(X^\nu(s))ds<\infty
$$
which implies that there is a sequence of positive numbers $t_k\rightarrow \infty$ as $k\rightarrow \infty$ such that 
$$
\limsup_{k\rightarrow \infty}\E f(X^\nu(t_k))<\infty.
$$
As $f$ grows superlinearly and as $u$ grows at most as fast as $|x|$, as $|x|\rightarrow \infty$,
$$
\limsup_{k\rightarrow \infty}\E u(X^\nu(t_k))<\infty.
$$
Choosing $t=t_k$ in \eqref{passlimit?} and sending $k\rightarrow \infty$ establishes \eqref{lamlessthan}.  In particular,
\begin{equation}\label{lamlessthanlamstar}
\lambda\le \lambda^*:=\inf_{\nu}\limsup_{t\rightarrow \infty}\frac{1}{t}\left\{\E\int^{t}_{0}f(X^\nu(s))ds + |\nu|(t)\right\}.
\end{equation}

\par If there is a control $\nu^*$ such that 
\begin{equation}\label{nu1}
\lambda - \Delta u(X^{\nu^*}(t)) - f(X^{\nu^*}(t))=0, \; \text{for $t\in(0,\infty)$ \text{a.s.}},
\end{equation}

\begin{equation}\label{nu2}
\int^{t}_{0}Du(X^{\nu^*}(s))\cdot d\nu^*(s)=-\nu^*(t),\; \text{for $t\in(0,\infty)$ \text{a.s.}},
\end{equation}
and

\begin{equation}\label{nu3}
u(X^{\nu^*}(t+))-u(X^{\nu^*}(t)) - Du(X^{\nu^*}(t))\cdot (X^{\nu^*}(t+) - X^{\nu^*}(t))=0,\; \text{for $t\in(0,\infty)$ \text{a.s.}},
\end{equation}
then equality holds in \eqref{lamlessthanlamstar}.  In this case, we have $\lambda^*$ as a probabilistic formula for the eigenvalue appearing 
in \eqref{lamPDE}.   
\begin{rem}
$\nu^*$ satisfying \eqref{nu1}, \eqref{nu2}, and \eqref{nu3} is a good candidate for an optimal control. Designing such an optimal control can be done via reflected diffusions if enough regularity 
is assumed on $u$ and on the boundary of the set of points $x$ such that $|Du(x)|<1$. This procedure is discussed in detail in section 5 of \cite{M} and in section 12 of \cite{SS}.
\end{rem}

\section{Comparison of eigenvalues}\label{CompEig}

The purpose of this section is to prove that there can be at most one eigenvalue for which the PDE \eqref{lamPDE} has a viscosity solution satisfying 
\eqref{ugrowth}.  We will not motivate viscosity solutions except to say that it is a weak notion of 
solutions of PDE and that it provides a very flexible approach to establishing comparison
principles for sub- and supersolutions of scalar, non-linear, elliptic and parabolic PDE. We refer the interested reader to a few of the well known references for
the theory of viscosity solutions such as \cite{CIL,FS}.

\begin{defn}\label{ViscEigDef}
$u\in USC(\R^n)$ is a {\it viscosity subsolution} of $\eqref{lamPDE}$ with {\it eigenvalue $\lambda\in\R$}
if for each $x_0\in\R^n$, 
$$
\max\left\{ \lambda -\Delta \varphi(x_0) - f(x_0), |D\varphi(x_0)|-1\right\}\le 0
$$
whenever $u-\varphi$ has a local maximum at $x_0$ and $\varphi \in C^2(\R^n)$. $v\in LSC(\R^n)$ is a {\it viscosity supersolution}
of $\eqref{lamPDE}$ with {\it eigenvalue $\mu\in\R$} if for each $y_0\in\R^n$, 
$$
\max\left\{ \mu -\Delta \psi(y_0) - f(y_0), |D\psi(y_0)|-1\right\}\ge 0
$$
whenever $v-\psi$ has a local minimum at $y_0$ and $\psi \in C^2(\R^n)$.  $u\in C(\R^n)$ is a {\it viscosity 
solution} of \eqref{lamPDE} with {\it eigenvalue} $\lambda\in\R$ if it is both a viscosity sub- and supersolution of \eqref{lamPDE} with
eigenvalue $\lambda.$
\end{defn}
An equivalent definition can be given via second order sub- and super-jets. 

\begin{defn}
$(i)$ Let $x_0\in \R^n$. $(p,X)\in \R^n\times {\mathcal S(n)}$ belongs to the {\it second order superjet of $u$ at $x_0$} if 
\begin{equation}\label{superjet}
u(x)\le u(x_0) + p\cdot(x-x_0) + \frac{1}{2}X(x-x_0)\cdot (x-x_0)+ o(|x-x_0|^2)
\end{equation}
as $|x-x_0|\rightarrow 0.$\footnote{${\mathcal S}(n)$ is the set of $n\times n$ symmetric matrices. } The collection of all such pairs $(p,X)$ is denoted $J^{2,+}u(x_0)$. \\
$(ii)$ Let $x_0\in \R^n$. $(p,X)\in \R^n\times {\mathcal S(n)}$ belongs to the {\it second order subjet of $u$ at $x_0$} if 
\begin{equation}\label{subjet}
u(x)\ge u(x_0) + p\cdot(x-x_0) + \frac{1}{2}X(x-x_0)\cdot (x-x_0)+ o(|x-x_0|^2)
\end{equation}
as $|x-x_0|\rightarrow 0. $ The collection of all such pairs $(p,X)$ is denoted $J^{2,-}u(x_0)$. 
\end{defn}
\begin{rem}
First order sub- and super-jets  are defined similarly. 
\end{rem}
Notice that if $u-\varphi$ has a local maximum [minimum] at $x_0$ and $\varphi$ is smooth, then \eqref{superjet} [\eqref{subjet}] holds with 
\begin{equation}\label{jet2varphi}
p=D\varphi(x_0)\quad \text{and}\quad X=D^2\varphi(x_0).
\end{equation}
A converse of this fact is also true and we refer the reader to \cite{CIL} for a proof. 

\begin{lem}
Suppose that $(p,X)\in J^{2,+}u(x_0)$. Then there is an open set $U\ni x_0$ and $\varphi\in C^2(U)$ such that \eqref{jet2varphi} holds. 
\end{lem}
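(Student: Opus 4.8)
The plan is to turn the second-order superjet inequality at $x_0$ into the construction of a smooth radial upper barrier. First I would normalize by subtracting the quadratic
\[
q(x):=u(x_0)+p\cdot(x-x_0)+\tfrac12 X(x-x_0)\cdot(x-x_0),
\]
so that $w:=u-q$ has $w(x_0)=0$ and, by $(p,X)\in J^{2,+}u(x_0)$, $w(x)\le o(|x-x_0|^2)$ as $x\to x_0$; in particular $w$ is bounded above near $x_0$, so
\[
\rho(r):=\sup_{|x-x_0|\le r}\bigl(w(x)\bigr)^{+}
\]
is finite for small $r>0$, nondecreasing, and satisfies $\rho(0)=0$ and $\rho(r)=o(r^2)$ as $r\to0^+$. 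It then suffices to produce $\delta>0$ and $\sigma\in C^2([0,\delta))$ with $\sigma(0)=\sigma'(0)=\sigma''(0)=0$ and $\sigma\ge\rho$ on $[0,\delta)$. Indeed, because $\sigma'(0)=0$ the map $x\mapsto\sigma(|x-x_0|)$ is then $C^2$ on $U:=\{|x-x_0|<\delta\}$, with zero gradient and Hessian $\sigma''(0)I=0$ at $x_0$; hence $\varphi:=q+\sigma(|\cdot-x_0|)$ lies in $C^2(U)$, has $D\varphi(x_0)=p$ and $D^2\varphi(x_0)=X$, and satisfies $u=w+q\le\rho(|\cdot-x_0|)+q\le\varphi$ on $U$ with equality at $x_0$. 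Thus $u-\varphi$ has a local maximum at $x_0$, and \eqref{jet2varphi} holds.

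Everything then reduces to the one-variable barrier lemma: a nondecreasing $\rho\ge0$ with $\rho(0)=0$ and $\rho(r)=o(r^2)$ admits a $C^2$ majorant whose $2$-jet at $0$ vanishes. I would prove this by smoothing. The function $g(r):=\sup_{0<s\le r}\rho(s)/s^2$ is nondecreasing, has $g(0^+)=0$, and gives $\rho(r)\le r^2 g(r)$. After the substitution $r=e^{-t}$ this says $G(t):=g(e^{-t})$ is nonincreasing and tends to $0$ as $t\to+\infty$, so one can pick a $C^\infty$ nonincreasing ``staircase'' $\Gamma\ge G$ that descends between consecutive levels through fixed rescaled bump profiles, so that $|\Gamma'|$ and $|\Gamma''|$ are controlled by the (vanishing) step sizes; in particular $\Gamma\to0$ and $\Gamma',\Gamma''\to0$ as $t\to+\infty$. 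Setting $\sigma(r):=r^2\,\Gamma(\log(1/r))$ for $r>0$ and $\sigma(0):=0$, a short computation converts $\Gamma,\Gamma',\Gamma''\to0$ into $\sigma,\sigma',\sigma''\to0$ as $r\to0^+$, so $\sigma\in C^2([0,\delta))$ with $\sigma(0)=\sigma'(0)=\sigma''(0)=0$; and $\sigma(r)=r^2\Gamma(\log(1/r))\ge r^2 G(\log(1/r))=r^2 g(r)\ge\rho(r)$.

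The only point requiring (routine) care is the bookkeeping at the origin: verifying that the smoothed majorant is genuinely $C^2$ up to $r=0$ rather than merely $C^1$, and that the $2$-jet of $x\mapsto\sigma(|x-x_0|)$ at $x_0$ is $(0,\sigma''(0)I)=(0,0)$, so that $\varphi$ has the prescribed gradient $p$ and Hessian $X$. Both are elementary. Since this barrier device and this lemma are completely standard in viscosity solution theory, I would keep the write-up short and refer to \cite{CIL} for the remaining details.
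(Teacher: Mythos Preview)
The paper does not actually prove this lemma; it simply refers the reader to \cite{CIL}. Your proposal supplies precisely the standard construction behind that reference: subtract the test quadratic, dominate the nonnegative remainder $\rho(r)=o(r^2)$ by a $C^2$ radial function $\sigma$ with vanishing $2$-jet at $0$, and set $\varphi=q+\sigma(|\cdot-x_0|)$. The argument is correct, including the point you flag about $x\mapsto\sigma(|x-x_0|)$ being genuinely $C^2$ at the center (your computation $\sigma'(r)/r=2\Gamma-\Gamma'\to0$ handles the tangential part of the Hessian). Since the paper's own ``proof'' is just the citation, your sketch is in the same spirit but strictly more informative.
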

Now its clear that $u\in USC(\R^n)$ is a viscosity subsolution of $\eqref{lamPDE}$ with eigenvalue $\lambda\in\R$
if for each $(p,X)\in J^{2,+}u(x_0)$, 
\begin{equation}\label{jetSub}
\max\left\{ \lambda -\tr X - h(x_0), |p|-1\right\}\le 0;
\end{equation}
and $v\in LSC(\R^n)$ is a viscosity super-solution of $\eqref{lamPDE}$ with eigenvalue $\mu\in\R$
if for each $(q,Y)\in J^{2,-}v(y_0)$, 
\begin{equation}\label{jetSuper}
\max\left\{ \mu -\tr Y - h(y_0), |q|-1\right\}\ge 0.
\end{equation}

\begin{rem}
We can actually get away with less when determining whether or not a function is a sub- or supersolution.  Define
\begin{eqnarray}
\overline{J}^{2,+}u(x_0)&=&\{(p,X)\in \R^n \times {\mathcal S}(n) : \; \text{there exists} \; (x_n, p_n,X_n) \in \R^n \times J^{2,+}u(x_n),\text{ for } n\in\N, \nonumber \\
&& \text{such that} \; (x_n,u(x_n), p_n, X_n)\rightarrow (x_0,u(x_0),p,X), \; \text{as}\; n\rightarrow \infty\} \nonumber
\end{eqnarray}
and
\begin{eqnarray}
\overline{J}^{2,-}v(y_0)&=&\{(q,Y)\in \R^n \times {\mathcal S}(n) : \; \text{there exists} \; (y_n, q_n,Y_n) \in \R^n \times J^{2,-}v(y_n),\text{ for } n\in\N, \nonumber \\
&& \text{such that} \; (y_n,v(y_n), q_n, Y_n)\rightarrow (y_0,v(y_0),q,Y), \; \text{as}\; n\rightarrow \infty\}. \nonumber
\end{eqnarray}
It is straightforward to show that if $u$ is a viscosity subsolution of \eqref{lamPDE} with eigenvalue $\lambda$ and $(p,X)\in \overline{J}^{2,+}u(x_0)$, then \eqref{jetSub} still holds; and if $v$ is a viscosity supersolution of \eqref{lamPDE} with eigenvalue $\mu$ and $(q,Y)\in \overline{J}^{2,-}v(y_0)$, then \eqref{jetSuper} still holds.
\end{rem}
Towards establishing a uniqueness result, we first establish a comparison principle for eigenvalues with sub- and supersolutions.  As we shall do several times 
in this paper, we will first present a formal (heuristic) argument followed by a rigorous proof.  The purpose of doing this is to convey the motivating ideas.

\begin{prop}\label{comparison}
Suppose $u$ is a subsolution of \eqref{lamPDE} with eigenvalue $\lambda$ and that 
 $v$ is a supersolution of \eqref{lamPDE} with eigenvalue $\mu$. If in addition
\begin{equation}\label{BCcomp1}
\limsup_{|x|\rightarrow \infty}\frac{u(x)}{|x|}\le 1\le \liminf_{|x|\rightarrow \infty}\frac{v(x)}{|x|},
\end{equation}
then $\lambda\le \mu.$
\end{prop}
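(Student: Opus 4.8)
The plan is to prove $\lambda\le\mu$ directly (no contradiction needed), first via a heuristic that isolates the one nonstandard device, then rigorously by the standard doubling-of-variables method. For the heuristic, suppose momentarily that $u,v\in C^2(\R^n)$ and, for $\theta\in(0,1)$, set $u^\theta:=(1-\theta)u$. Since $f\ge 0$ and $0<\theta<1$, a direct computation shows $u^\theta$ is a smooth subsolution of $\max\{\lambda^\theta-\Delta w-f,\,|Dw|-(1-\theta)\}\le 0$ with $\lambda^\theta:=(1-\theta)\lambda$; crucially its gradient is bounded \emph{strictly} below the unit threshold. From \eqref{BCcomp1}, $\limsup_{|x|\to\infty}(u^\theta(x)-v(x))/|x|\le(1-\theta)-1=-\theta<0$, so $u^\theta-v\to-\infty$ and attains a maximum at some $x_\theta$. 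There $Du^\theta(x_\theta)=Dv(x_\theta)$ and $\Delta u^\theta(x_\theta)\le\Delta v(x_\theta)$. Since $|Dv(x_\theta)|=|Du^\theta(x_\theta)|\le 1-\theta<1$, the supersolution inequality for $v$ must hold through its first argument, $\mu-\Delta v(x_\theta)-f(x_\theta)\ge 0$, while the subsolution inequality for $u^\theta$ gives $\lambda^\theta-\Delta u^\theta(x_\theta)-f(x_\theta)\le 0$; chaining, $\lambda^\theta-f(x_\theta)\le\Delta u^\theta(x_\theta)\le\Delta v(x_\theta)\le\mu-f(x_\theta)$, so $\lambda^\theta\le\mu$, and $\theta\downarrow 0$ gives $\lambda\le\mu$.

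To make this rigorous I would run the doubling argument of \cite{CIL}. Fix $\theta\in(0,1)$ and set $u^\theta:=(1-\theta)u$; a short computation with second order super-jets (using $f\ge 0$) shows $u^\theta$ is a viscosity subsolution of $\max\{\lambda^\theta-\Delta w-f,\,|Dw|-(1-\theta)\}\le 0$ with $\lambda^\theta=(1-\theta)\lambda$. For $\alpha>0$ consider $\Phi_\alpha(x,y):=u^\theta(x)-v(y)-\tfrac{\alpha}{2}|x-y|^2$ on $\R^n\times\R^n$. By \eqref{BCcomp1} (now with a strict margin $\theta$), $\Phi_\alpha(x,y)\to-\infty$ as $|x|+|y|\to\infty$, uniformly in $\alpha\ge 1$, so $\Phi_\alpha$ attains a maximum at some $(x_\alpha,y_\alpha)$, with $\{(x_\alpha,y_\alpha)\}_{\alpha\ge1}$ bounded and $\alpha|x_\alpha-y_\alpha|^2\to 0$. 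The Crandall--Ishii lemma then produces, with $p_\alpha:=\alpha(x_\alpha-y_\alpha)$, matrices $X_\alpha,Y_\alpha\in\mathcal{S}(n)$ with $(p_\alpha,X_\alpha)\in\overline{J}^{2,+}u^\theta(x_\alpha)$, $(p_\alpha,Y_\alpha)\in\overline{J}^{2,-}v(y_\alpha)$ and $X_\alpha\le Y_\alpha$ (hence $\tr X_\alpha\le\tr Y_\alpha$). The subsolution property of $u^\theta$ gives $|p_\alpha|\le 1-\theta$ and $\tr X_\alpha\ge\lambda^\theta-f(x_\alpha)$; since $|p_\alpha|\le 1-\theta<1$, the supersolution property of $v$ forces $\tr Y_\alpha\le\mu-f(y_\alpha)$. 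Therefore $\lambda^\theta-f(x_\alpha)\le\tr X_\alpha\le\tr Y_\alpha\le\mu-f(y_\alpha)$, i.e.\ $\lambda^\theta\le\mu+\big(f(x_\alpha)-f(y_\alpha)\big)$. Letting $\alpha\to\infty$, continuity of $f$ together with boundedness of $\{x_\alpha,y_\alpha\}$ and $|x_\alpha-y_\alpha|\to 0$ gives $f(x_\alpha)-f(y_\alpha)\to 0$, so $\lambda^\theta\le\mu$; finally $\theta\downarrow 0$ yields $\lambda=\lim_{\theta\to 0}(1-\theta)\lambda\le\mu$.

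The routine ingredients are the doubling machinery and the matrix inequality $X_\alpha\le Y_\alpha$, both verbatim from \cite{CIL}; the real content---the ``maximum principle'' idea---is the rescaling $u\rightsquigarrow(1-\theta)u$. Without it, at a maximizing point the supersolution inequality for $v$ could be satisfied solely through its gradient branch $|Dv|\ge 1$, which is perfectly compatible with the subsolution constraint $|Du|\le 1$, and then no comparison of the Laplacian terms---hence none of $\lambda$ and $\mu$---can be extracted. Shrinking the gradient ceiling of $u$ to $1-\theta<1$ is exactly what rules out that branch; as a bonus, the resulting strict slack in \eqref{BCcomp1} supplies the coercivity of $\Phi_\alpha$, so no artificial penalty such as $\varepsilon\sqrt{1+|x|^2}$ is required. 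I expect this rescaling to be the only step needing a genuine idea; the rest is bookkeeping.
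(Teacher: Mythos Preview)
Your proof is correct and is essentially the paper's own argument: the paper fixes $\epsilon\in(0,1)$ and works with $\epsilon u(x)-v(y)-\tfrac{1}{2\delta}|x-y|^2$, which is exactly your $\Phi_\alpha$ with the relabeling $\epsilon=1-\theta$ and $\alpha=1/\delta$. The one genuine idea --- scaling $u$ by a factor strictly less than $1$ so that the gradient of the rescaled subsolution is bounded strictly below $1$, forcing the supersolution inequality for $v$ through its Laplacian branch --- is identical in both treatments, and the remaining steps (Theorem of Sums, $X\le Y$, boundedness of maximizers, passage to the limit) match as well.
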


\noindent 
{\it Formal proof}. Here we assume that $u,v\in C^2(\R^n)$. Fix $0<\epsilon<1$ and set 
$$
w^\epsilon(x)=\epsilon u(x) - v(x), \quad x\in\R^n.
$$
By \eqref{BCcomp1}, we have $\lim_{|x|\rightarrow \infty}w^\epsilon(x)=-\infty$, so there is $x_\epsilon\in\R^n$ such that
$$
w^\epsilon(x_\epsilon)=\sup_{x\in\R^n}w^\epsilon(x).
$$
Basic calculus gives
$$
\begin{cases}
0=Dw^\epsilon(x_\epsilon)=\epsilon Du(x_\epsilon)-Dv(x_\epsilon) \\
0\ge \Delta w^\epsilon(x_\epsilon)=\epsilon \Delta u(x_\epsilon) - \Delta v(x_\epsilon) \\
\end{cases}.
$$
Note in particular that
$$
|Dv(x_\epsilon)|=\epsilon |Du(x_\epsilon)|\le \epsilon <1,
$$
and since $v$ is a supersolution of \eqref{lamPDE} with eigenvalue $\mu$,
$$
0\le \mu - \Delta v(x_\epsilon)-f(x_\epsilon).
$$
As $u$ is a subsolution of  \eqref{lamPDE} with eigenvalue $\lambda$
\begin{eqnarray}
\epsilon \lambda - \mu&\le& \epsilon \Delta u(x_\epsilon) - \Delta v(x_\epsilon)-(1-\epsilon)f(x_\epsilon) \nonumber \\
 &\le & -(1-\epsilon)f(x_\epsilon) \nonumber \\
 &\le & 0 \nonumber.
\end{eqnarray}
Here we have used that $f$ is non-negative. Letting $\epsilon\rightarrow 1^-$, gives $\lambda\le \mu$.  
\begin{flushright}
$\Box$
\end{flushright}
We now make this rigorous by using a ``doubling the variables" type of argument.  This is a fairly standard approach in the 
theory of viscosity solutions. What makes this problem different is the fact that the domain is the whole space $\R^n$. However, we 
have the growth condition \eqref{ugrowth} which plays the role of a boundary condition.

\begin{proof} (of the proposition) 
1. Fix $0<\epsilon <1$ and set 
$$
w^\epsilon(x,y)=\epsilon u(x) - v(y), \quad x,y\in\R^n.
$$
For $\delta >0$, we also set 
$$
\varphi_\delta(x,y)=\frac{1}{2\delta}|x-y|^2, \quad x,y\in\R^n.
$$
The inequality 
\begin{eqnarray}\label{wepsIneq}
w^\epsilon(x,y)-\varphi_\delta(x,y) & = &\epsilon (u(x)-u(y)) - \frac{1}{2\delta}|x-y|^2 + \epsilon u(y)-v(y) \nonumber \\
                                                              &\le &\left(|x-y| - \frac{1}{2\delta}|x-y|^2\right) +  \epsilon u(y)-v(y) \nonumber
\end{eqnarray}
implies 
$$
\lim_{|(x,y)|\rightarrow \infty}\left\{w^\epsilon(x,y)-\varphi_\delta(x,y) \right\} = -\infty.
$$
Therefore, $w^\epsilon-\varphi_\delta$ achieves a global maximum at a point $(x_\delta, y_\delta)\in \R^n\times\R^n$.  

\par 2. According to the Theorem of Sums (Theorem 3.2 in \cite{CIL}), for each $\rho>0$, there are $X,Y\in {\mathcal S}(n)$ such that 

$$
\left(\frac{x_\delta - y_\delta}{\delta}, X\right)=\left(D_x\varphi_\delta(x_\delta,y_\delta), X\right)\in \overline{J}^{2,+}(\epsilon u)(x_\delta),
$$ 
\begin{equation}\label{vCalcCond}
\left(\frac{x_\delta - y_\delta}{\delta}, Y\right)=\left(-D_y\varphi_\delta(x_\delta,y_\delta), Y\right)\in \overline{J}^{2,-}v(y_\delta),
\end{equation}
and 
\begin{equation}\label{ImportantMatIneq}
\left(\begin{array}{cc}
X & 0 \\
0 & - Y
\end{array}\right)\le A +\rho A^2.
\end{equation}
Here 
$$
A=D^2\varphi_\delta(x_\delta,y_\delta)=\frac{1}{\delta}
\left(\begin{array}{cc}
I_n & -I_n \\
-I_n & I_n
\end{array}\right).
$$
Applying both sides of the matrix inequality \eqref{ImportantMatIneq} to the vector $(\xi, \xi)^t\in \R^{2n}$ and then taking the dot product with $(\xi, \xi)^t$ yields
$$
X\xi\cdot \xi - Y\xi\cdot \xi\le 0.
$$ 
As $\xi\in \R^n$ is arbitrary, $X\le Y$.

\par 3. We also have 
$$
\frac{1}{\epsilon}\frac{x_\delta - y_\delta}{\delta}\in \overline{J}^{1,+}u(x_\delta),
$$
and since $|Du|\le 1$ (in the sense of viscosity solutions),
$$
\left|\frac{x_\delta - y_\delta}{\delta}\right|\le \epsilon <1. 
$$
Since $v$ is a viscosity supersolution of \eqref{lamPDE} with eigenvalue $\mu$, we have
$$
0\le \mu -\tr Y - f(y_\delta)
$$
by  \eqref{vCalcCond}. As $u$ is a viscosity subsolution of \eqref{lamPDE} with eigenvalue $\lambda$,
$$
\lambda -\frac{\tr{X}}{\epsilon} - f(x_\delta)\le 0.
$$
Therefore,
\begin{equation}\label{lammuest}
\epsilon \lambda -\mu \le \tr[X-Y]+\epsilon f(x_\delta)-f(y_\delta)\le f(x_\delta)-f(y_\delta).
\end{equation}

\par 4. We now claim that $x_\delta\in\R^n$ is bounded for all small enough $\delta>0.$ If not, then there is a sequence of $\delta\rightarrow 0$, for which
$(w^{\epsilon}-\varphi_\delta)(x_\delta,y_\delta)$ tends to $-\infty$ as this sequence of $\delta$ tends to 0.  Indeed 
\begin{eqnarray}
(w^{\epsilon}-\varphi_\delta)(x_\delta,y_\delta)&=& (\epsilon u(x_\delta)-v(x_\delta) ) + v(x_\delta)-v(y_\delta) -\frac{|x_\delta - y_\delta|^2}{2\delta}\nonumber \\
&\le & (\epsilon u(x_\delta)-v(x_\delta) ) + |x_\delta-y_\delta|-\frac{|x_\delta - y_\delta|^2}{2\delta}\nonumber \\ 
&\le & \epsilon u(x_\delta)-v(x_\delta)  +\frac{\delta}{2} \nonumber
\end{eqnarray}
which tends to $-\infty$ as $\delta \rightarrow 0$ provided $\lim_{\delta\rightarrow 0^+}|x_\delta|=+\infty.$ This would be the case for some sequence of  $\delta\rightarrow 0$,
provided $x_\delta $ is unbounded. 

\par However,
\begin{eqnarray}
(w^{\epsilon}-\varphi_\delta)(x_\delta,y_\delta)&=&\max_{x,y\in\R^n}\left\{\epsilon u(x) - v(y) -\frac{|x-y|^2}{2\delta}\right\}\nonumber \\
&\ge & \epsilon u(0)-v(0) \nonumber \\
&>&-\infty, \nonumber
\end{eqnarray}
and thus $x_\delta$ lies in a bounded subset of $\R^n$.  Likewise, we conclude that $y_\delta$ is also bounded for all $\delta>0$ and small.  It then follows from Lemma 3.1 in \cite{CIL}, 
$$
\lim_{\delta\rightarrow 0^+}\frac{|x_\delta-y_\delta|^2}{2\delta}=0,
$$
and therefore the sequence $( (x_\delta,y_\delta) )_{\delta>0}$ has a cluster point $(x_\epsilon,x_\epsilon)$ for some sequence of $\delta\rightarrow 0^+.$ Passing to this limit in \eqref{lammuest} along this such a sequence gives
$$
\epsilon \lambda -\mu\le 0.
$$
We conclude by letting $\epsilon\rightarrow 1^-$.
\end{proof}
Uniqueness of eigenvalues with solutions having the appropriate growth for large values of $|x|$ is now immediate.
 
\begin{cor}\label{uniquenessoflam}
There can be at most one $\lambda \in \R$ such that \eqref{lamPDE} has a viscosity solution $u$ satisfying the growth condition
\eqref{ugrowth}.
\end{cor}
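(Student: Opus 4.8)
\textbf{Proof proposal for Corollary \ref{uniquenessoflam}.} The plan is to deduce this directly from the comparison principle of Proposition \ref{comparison}, applying it twice with the roles of the two solutions interchanged. Suppose $\lambda_1,\lambda_2\in\R$ are both eigenvalues and that $u_1,u_2\in C(\R^n)$ are viscosity solutions of \eqref{lamPDE} with eigenvalues $\lambda_1,\lambda_2$ respectively, each satisfying the growth condition \eqref{ugrowth}. First I would observe that a viscosity solution is in particular both a subsolution and a supersolution, so $u_1$ is a subsolution with eigenvalue $\lambda_1$ and $u_2$ is a supersolution with eigenvalue $\lambda_2$. Next, since \eqref{ugrowth} gives $\lim_{|x|\to\infty}u_1(x)/|x|=1$ and $\lim_{|x|\to\infty}u_2(x)/|x|=1$, the hypothesis \eqref{BCcomp1} of Proposition \ref{comparison} holds with $u=u_1$ and $v=u_2$ (indeed both limsup and liminf equal $1$). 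Hence Proposition \ref{comparison} yields $\lambda_1\le\lambda_2$.

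Then I would repeat the argument with the roles reversed: $u_2$ is a subsolution with eigenvalue $\lambda_2$, $u_1$ is a supersolution with eigenvalue $\lambda_1$, and \eqref{BCcomp1} again holds, so Proposition \ref{comparison} gives $\lambda_2\le\lambda_1$. Combining the two inequalities forces $\lambda_1=\lambda_2$, which is the asserted uniqueness. There is no real obstacle here; the only point to be careful about is that the growth condition \eqref{ugrowth} for each solution supplies exactly the one-sided bounds needed in \eqref{BCcomp1} in \emph{both} applications of the proposition, so no additional hypotheses on the $u_i$ are required.
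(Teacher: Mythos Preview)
Your argument is correct and is exactly the ``immediate'' deduction from Proposition~\ref{comparison} that the paper has in mind: apply the comparison principle once with $(u,v)=(u_1,u_2)$ and once with $(u,v)=(u_2,u_1)$, using that \eqref{ugrowth} supplies both inequalities in \eqref{BCcomp1}. There is nothing to add or change.
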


\section{Approximation scheme}\label{EllipEst}
Another interesting corollary of Proposition \ref{comparison} is

\begin{cor}\label{minmaxformula}
Suppose there exists an eigenvalue $\lambda^*$ as described in Theorem \ref{mainthm}. Then 
\begin{eqnarray}\label{maxform}
\lambda^*&=&\sup\text{{\huge\{} } \lambda \in\R: \text{there exists a subsolution $u$ of \eqref{lamPDE} with eigenvalue $\lambda$}, \nonumber \\
&& \hspace{2in}\left. \text{satisfying $\limsup_{|x|\rightarrow \infty}\frac{u(x)}{|x|}\le 1$.}  \right\}
\end{eqnarray}
and
\begin{eqnarray}\label{minform}
\lambda^*&=&\inf\text{{\huge\{} } \mu \in\R: \text{there exists a supersolution $v$ of \eqref{lamPDE} with eigenvalue $\mu$}, \nonumber \\
&& \hspace{2in}\left. \text{satisfying $\liminf_{|x|\rightarrow \infty}\frac{v(x)}{|x|}\ge 1$.}  \right\}
\end{eqnarray}
\end{cor}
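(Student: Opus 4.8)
This is a direct consequence of Proposition \ref{comparison}, obtained by playing the solution $u^*$ furnished by Theorem \ref{mainthm} against itself in two different roles. The key observation is that a viscosity solution of \eqref{lamPDE} with eigenvalue $\lambda^*$ is simultaneously a subsolution and a supersolution with that same eigenvalue, and that the two-sided growth condition \eqref{ugrowth} implies \emph{both} one-sided bounds $\limsup_{|x|\to\infty} u^*(x)/|x|\le 1$ and $\liminf_{|x|\to\infty} u^*(x)/|x|\ge 1$. So $u^*$ is an admissible competitor in each of the two variational problems.

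First I would establish the $\le$ direction in \eqref{maxform}. Let $\lambda\in\R$ be any value for which there is a subsolution $u$ of \eqref{lamPDE} with eigenvalue $\lambda$ satisfying $\limsup_{|x|\to\infty} u(x)/|x|\le 1$. Apply Proposition \ref{comparison} with this $u$ as the subsolution and $v:=u^*$ as the supersolution (eigenvalue $\mu=\lambda^*$), noting that \eqref{BCcomp1} holds because $\limsup u(x)/|x|\le 1\le \liminf u^*(x)/|x|$ by \eqref{ugrowth}; the conclusion is $\lambda\le\lambda^*$. Taking the supremum over all such $\lambda$ gives that the right-hand side of \eqref{maxform} is $\le\lambda^*$. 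For the reverse inequality, observe that $u^*$ itself is a subsolution with eigenvalue $\lambda^*$ and $\limsup u^*(x)/|x|=1\le 1$, so $\lambda^*$ belongs to the set on the right-hand side of \eqref{maxform}; hence that supremum is $\ge\lambda^*$. This proves \eqref{maxform}.

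The formula \eqref{minform} is proved symmetrically. For any $\mu$ admitting a supersolution $v$ with eigenvalue $\mu$ and $\liminf_{|x|\to\infty} v(x)/|x|\ge 1$, apply Proposition \ref{comparison} with $u:=u^*$ as the subsolution (eigenvalue $\lambda^*$, and $\limsup u^*(x)/|x|=1\le 1\le \liminf v(x)/|x|$) to get $\lambda^*\le\mu$; taking the infimum shows the right-hand side of \eqref{minform} is $\ge\lambda^*$. Conversely $u^*$ is a supersolution with eigenvalue $\lambda^*$ and $\liminf u^*(x)/|x|=1\ge 1$, so $\lambda^*$ lies in the relevant set and the infimum is $\le\lambda^*$. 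Combining the two inequalities yields \eqref{minform}.

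There is no real obstacle here beyond bookkeeping: the only point requiring a word of care is the verification that the two-sided limit \eqref{ugrowth} for $u^*$ supplies exactly the one-sided hypothesis \eqref{BCcomp1} needed to invoke Proposition \ref{comparison} in each direction, and that ``solution'' may be read as ``subsolution'' or ``supersolution'' as convenient. Everything else is an immediate application of the comparison principle already established.
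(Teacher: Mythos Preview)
Your proposal is correct and is exactly the argument the paper has in mind: the corollary is stated as an immediate consequence of Proposition~\ref{comparison}, and your two-inequality bookkeeping (using $u^*$ both as a competitor and as the fixed comparison function) is precisely how one unpacks that. There is nothing to add.
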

It would be of great interest to show both expressions on the right hand sides of \eqref{maxform} and \eqref{minform} are equal and that this number is the unique eigenvalue of
equation \eqref{lamPDE}. Such a procedure for producing eigenvalues would be reminiscent of Perron's method for exhibiting viscosity solutions of PDE enjoying a comparison principle. 
Unfortunately, this method does not work so directly in our context as it is not clear that if, say, the right hand side of \eqref{maxform} is not an eigenvalue we can find a strictly bigger number with 
a corresponding $u$ that is a subsolution of \eqref{lamPDE}. Therefore, we are led to an alternative procedure of approximating an eigenvalue.

\par The method we propose is a PDE version of the probabilistic approach used by Menaldi et.al \cite{M}. However, we believe the earliest application of this method appears in periodic homogenization of 
viscosity solutions of PDE in \cite{LPV,E}.  This approach essentially amounts to replacing $\lambda$ in \eqref{lamPDE} with ``$\delta u$'" and studying the resulting PDE for $\delta>0$ and small. 
If this resulting PDE has a unique solution $u_\delta$, the hope is that there is a sequence of $\delta$ tending to 0 such that  $\delta u_\delta$ tends to $\lambda.$

\par To this end, we will study solutions of the PDE \eqref{deltaPDE}

$$
\max\left\{\delta u  - \Delta u  - f, |Du| -1 \right\}=0, \quad x\in\R^n.
$$
In particular, we seek a viscosity solution $u$ satisfying \eqref{ugrowth}

$$
\lim_{|x|\rightarrow \infty}\frac{u(x)}{|x|}=1. \nonumber
$$

\begin{prop}\label{comparisonU}
Suppose $u$ is a subsolution of \eqref{deltaPDE} and that 
 $v$ is a supersolution of \eqref{deltaPDE}.  If in addition
$$
\limsup_{|x|\rightarrow \infty}\frac{u(x)}{|x|}\le 1\le \liminf_{|x|\rightarrow \infty}\frac{v(x)}{|x|},
$$
then $u\le v.$
\end{prop}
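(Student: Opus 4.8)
The plan is to follow the proof of Proposition~\ref{comparison} almost verbatim, now exploiting the fact that \eqref{deltaPDE} carries the genuinely monotone zeroth-order term $\delta u$; consequently the conclusion will be an inequality between the functions themselves rather than between (absent) eigenvalues.

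\emph{Heuristic.} Assume $u,v\in C^2(\R^n)$, fix $0<\epsilon<1$, and set $w^\epsilon=\epsilon u-v$. Since $\limsup_{|x|\to\infty}u(x)/|x|\le 1\le\liminf_{|x|\to\infty}v(x)/|x|$ and $\epsilon<1$, we have $w^\epsilon(x)\to-\infty$, so $w^\epsilon$ attains its maximum $M_\epsilon$ at some $x_\epsilon$. There $0=Dw^\epsilon(x_\epsilon)$, hence $|Dv(x_\epsilon)|=\epsilon|Du(x_\epsilon)|\le\epsilon<1$, so the supersolution property of $v$ must come from the second-order part, $\delta v(x_\epsilon)-\Delta v(x_\epsilon)-f(x_\epsilon)\ge 0$, while the subsolution property of $u$ gives $\delta u(x_\epsilon)-\Delta u(x_\epsilon)-f(x_\epsilon)\le 0$. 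Using $0\ge\Delta w^\epsilon(x_\epsilon)=\epsilon\Delta u(x_\epsilon)-\Delta v(x_\epsilon)$ and $f\ge 0$,
\[
\delta M_\epsilon=\delta\big(\epsilon u(x_\epsilon)-v(x_\epsilon)\big)\le \epsilon\Delta u(x_\epsilon)-\Delta v(x_\epsilon)+(\epsilon-1)f(x_\epsilon)\le 0,
\]
so $\epsilon u\le v$; letting $\epsilon\to 1^-$ gives $u\le v$.

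\emph{Rigorous proof.} Fix $0<\epsilon<1$, let $M_\epsilon:=\sup_{\R^n}(\epsilon u-v)$ (finite, by the growth hypotheses and upper semicontinuity of $\epsilon u-v$), and for $\alpha>0$ put $w^\epsilon(x,y)=\epsilon u(x)-v(y)-\tfrac{1}{2\alpha}|x-y|^2$. Exactly as in steps~1 and~4 of the proof of Proposition~\ref{comparison}, $w^\epsilon$ attains a global maximum at some $(x_\alpha,y_\alpha)$, the family $\{(x_\alpha,y_\alpha)\}$ remains in a fixed bounded set as $\alpha\to 0$ (here one also uses that $u$, being a subsolution, is $1$-Lipschitz), hence $|x_\alpha-y_\alpha|^2/\alpha\to 0$ and, along a subsequence, $x_\alpha,y_\alpha\to\hat x$ for a common $\hat x\in\R^n$. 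The Theorem of Sums supplies $X,Y\in{\mathcal S}(n)$ with $\big(\tfrac{x_\alpha-y_\alpha}{\alpha},X\big)\in\overline{J}^{2,+}(\epsilon u)(x_\alpha)$, $\big(\tfrac{x_\alpha-y_\alpha}{\alpha},Y\big)\in\overline{J}^{2,-}v(y_\alpha)$, and $X\le Y$. Since $\tfrac1\epsilon\tfrac{x_\alpha-y_\alpha}{\alpha}\in\overline{J}^{1,+}u(x_\alpha)$ and $|Du|\le 1$ in the viscosity sense, $|x_\alpha-y_\alpha|/\alpha\le\epsilon<1$; the subsolution property of $u$ then yields $\delta u(x_\alpha)-\tfrac{\tr X}{\epsilon}-f(x_\alpha)\le 0$, and since $|\tfrac{x_\alpha-y_\alpha}{\alpha}|<1$ the supersolution property of $v$ forces $\delta v(y_\alpha)-\tr Y-f(y_\alpha)\ge 0$. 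Multiplying the first inequality by $\epsilon$, subtracting, and using $\tr X\le\tr Y$, $0\le\epsilon\le 1$ and $f\ge 0$,
\[
\delta\big(\epsilon u(x_\alpha)-v(y_\alpha)\big)\le \tr[X-Y]+\epsilon f(x_\alpha)-f(y_\alpha)\le f(x_\alpha)-f(y_\alpha).
\]
Because $\epsilon u(x_\alpha)-v(y_\alpha)\ge w^\epsilon(x_\alpha,y_\alpha)\ge M_\epsilon$ and $f$ is continuous, letting $\alpha\to 0$ along the subsequence with $x_\alpha,y_\alpha\to\hat x$ gives $\delta M_\epsilon\le 0$, i.e.\ $\epsilon u\le v$ on $\R^n$; letting $\epsilon\to 1^-$ finishes the proof.

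\emph{Main obstacle.} I do not anticipate a genuine difficulty, since this is essentially a transcription of Proposition~\ref{comparison}. The two points needing a little care are: (a) carrying the maximum value $M_\epsilon$ through the doubling procedure — it is the inequality $\epsilon u(x_\alpha)-v(y_\alpha)\ge M_\epsilon$, not merely extraction of a limit point, that upgrades the limiting estimate to $\epsilon u\le v$; and (b) the localization $x_\alpha,y_\alpha\to\hat x$ in an unbounded domain, which is exactly where the growth hypotheses on $u$ and $v$ (together with the $1$-Lipschitz continuity forced by $|Du|\le 1$) enter.
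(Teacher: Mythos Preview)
Your proposal is correct and follows essentially the same approach as the paper, which explicitly omits the proof as being ``almost identical to the proof of Proposition~\ref{comparison}.'' Your adaptation---replacing the eigenvalue comparison $\epsilon\lambda-\mu$ by the function comparison $\delta(\epsilon u(x_\alpha)-v(y_\alpha))$ and tracking the maximum value $M_\epsilon$ through the doubling procedure---is exactly the intended transcription; the renaming of the doubling parameter to $\alpha$ (to avoid clashing with the $\delta$ in \eqref{deltaPDE}) is a sensible cosmetic choice.
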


\begin{proof}
We omit the proof as it is almost identical to the proof of Proposition \ref{comparison}. 
\end{proof}

\begin{cor}\label{uniquenessofU}
There can be at most one viscosity solution $u$ \eqref{deltaPDE} satisfying the growth condition \eqref{ugrowth}.
\end{cor}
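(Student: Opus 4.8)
The plan is to obtain this as an immediate consequence of the comparison principle established in Proposition \ref{comparisonU}. Suppose $u_1,u_2\in C(\R^n)$ are both viscosity solutions of \eqref{deltaPDE} satisfying \eqref{ugrowth}. By the definition of a viscosity solution, each $u_i$ is simultaneously a viscosity subsolution and a viscosity supersolution of \eqref{deltaPDE}.

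First I would apply Proposition \ref{comparisonU} with $u_1$ playing the role of the subsolution and $u_2$ playing the role of the supersolution. The hypothesis needed is $\limsup_{|x|\rightarrow\infty}u_1(x)/|x|\le 1\le\liminf_{|x|\rightarrow\infty}u_2(x)/|x|$; but since both functions satisfy \eqref{ugrowth} we actually have $\lim_{|x|\rightarrow\infty}u_i(x)/|x|=1$ for $i=1,2$, so the hypothesis holds (with equality on both sides). Proposition \ref{comparisonU} then gives $u_1\le u_2$ on $\R^n$. Next I would reverse the roles, taking $u_2$ as the subsolution and $u_1$ as the supersolution; the growth hypothesis is met for exactly the same reason, and we conclude $u_2\le u_1$. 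Combining the two inequalities yields $u_1=u_2$.

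There is essentially no obstacle here: the only thing worth noting is that the single condition \eqref{ugrowth}, being an honest limit equal to $1$, supplies both the upper bound required of the subsolution and the lower bound required of the supersolution in Proposition \ref{comparisonU}, which is what allows the proposition to be invoked in both directions.
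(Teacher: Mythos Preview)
Your argument is correct and is precisely the intended one: the paper states the corollary immediately after Proposition~\ref{comparisonU} with no separate proof, because it follows from that comparison principle by exactly the symmetric application you describe (just as Corollary~\ref{uniquenessoflam} follows from Proposition~\ref{comparison}).
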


With a comparison principle in hand, we can now employ a routine application of Perron's method to obtain existence of solutions once we have appropriate sub and supersolutions. 

\begin{lem}\label{sarm} Fix $0<\delta\le 1$.\\
$(i)$ There is a universal constant $K>0$ such that  

\begin{equation}\label{viscsub}
\underline{u}(x)=\left(|x|- K\right)^+,\; x\in\R^n
\end{equation}
is a viscosity subsolution of \eqref{deltaPDE} satisfying the growth condition \eqref{ugrowth}. \\
$(ii)$ There is a universal constant $K>0$ such that 

\begin{equation}\label{viscsuper}
\overline{u}(x)=\frac{K}{\delta} + \begin{cases}\frac{1}{2}|x|^2,\;\;\; |x|\le 1\\  |x|-\frac{1}{2},|x|\ge 1\end{cases} ,\; x\in\R^n
\end{equation}
is a viscosity supersolution of  \eqref{deltaPDE} satisfying the growth condition \eqref{ugrowth}.
\end{lem}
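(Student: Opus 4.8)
The plan is to verify directly that the explicit functions $\underline{u}$ and $\overline{u}$ satisfy the required viscosity inequalities, treating the smooth points by a classical computation and the isolated non-smooth points (the unit sphere $|x|=1$ and, for $\underline u$, the sphere $|x|=K$) by checking the jet conditions by hand. The key observation is that $f$ is non-negative and, by \eqref{hgrowth}, superlinear; in particular, for any constant $c$ there is a radius $R(c)$ so that $f(x)\ge c$ for $|x|\ge R(c)$, while on the ball $\{|x|\le R(c)\}$ the continuous function $f$ is bounded. This is what forces the ``universal constant'' $K$ to depend only on $f$ (and dimension $n$) but not on $\delta$ in part (i), and to appear divided by $\delta$ in part (ii).

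For part (i): away from the two spheres $|x|=0$ and $|x|=K$, the function $\underline u$ is either identically $0$ (where $|D\underline u|=0\le 1$ and $\delta\underline u-\Delta\underline u-f=-f\le 0$, so the subsolution inequality $\max\{\cdots\}\le 0$ holds trivially) or equals $|x|-K$ on $\{|x|>K\}$, where $|D\underline u|=1$, hence $\max\{\delta\underline u-\Delta\underline u-f,\ |D\underline u|-1\}=\max\{\cdots,0\}$ and we only need the first entry $\le 0$, i.e. $\delta(|x|-K)-\tfrac{n-1}{|x|}-f(x)\le 0$. Since $\delta\le 1$ and $f\ge 0$, it suffices to have $|x|-K-\tfrac{n-1}{|x|}\le 0$ on $|x|\ge K$ — wait, that is false for large $|x|$, so here I instead use superlinearity: choose $K$ with $f(x)\ge |x|$ for $|x|\ge K$, and then $\delta(|x|-K)-\tfrac{n-1}{|x|}-f(x)\le |x|-f(x)\le 0$. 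At the non-smooth sphere $|x|=K$, any $C^2$ function $\varphi$ touching $\underline u$ from above at a point $x_0$ with $|x_0|=K$ has $|D\varphi(x_0)|\le 1$ (the one-sided slopes of $\underline u$ are $0$ and $1$), so again only the first entry must be checked, and there one uses that $\Delta\varphi(x_0)$ can be taken $\ge$ any value consistent with the touching — more carefully, from above at a convex corner the superjet is empty in the relevant directions, so there is nothing to check; at $|x_0|=0$, $\underline u\equiv 0$ near $x_0$ so $\varphi$ has a min-type structure and the inequality is immediate. Finally $\underline u(x)/|x|\to 1$ is clear.

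For part (ii): write $\overline u(x)=\tfrac{K}{\delta}+g(x)$ with $g(x)=\tfrac12|x|^2$ for $|x|\le 1$ and $g(x)=|x|-\tfrac12$ for $|x|\ge 1$; note $g$ is $C^1$ and convex, with $|Dg|\le 1$ everywhere, so $|D\overline u|-1\le 0$ and the supersolution inequality $\max\{\delta\overline u-\Delta\overline u-f,\ |D\overline u|-1\}\ge 0$ reduces to showing $\delta\overline u-\Delta\overline u-f\ge 0$ in the viscosity sense. On $|x|<1$, $\Delta g=n$ and $\delta\overline u-\Delta\overline u-f=K+\delta g-n-f$; choosing $K\ge n+\sup_{|x|\le 1}f(x)$ and using $g\ge 0$ makes this $\ge 0$. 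On $|x|>1$, $\Delta g=\tfrac{n-1}{|x|}\le n-1$, and $\delta\overline u-\Delta\overline u-f=K+\delta(|x|-\tfrac12)-\tfrac{n-1}{|x|}-f\ge K-(n-1)-f$ only if we can dispense with the $\delta(|x|-\tfrac12)$ term — which we can, since it is $\ge -\tfrac12$, so increasing $K$ by $n$ suffices provided $f$ is bounded, but $f$ is not bounded. The fix: for $|x|\ge 1$ we need $K+\delta(|x|-\tfrac12)-\tfrac{n-1}{|x|}\ge f(x)$, and since $\overline u$ is a supersolution we are free to also use the second branch $|D\overline u|-1=0\ge 0$ wherever $|x|\ge 1$, so in fact on $\{|x|\ge 1\}$ the supersolution inequality holds automatically with no condition on $f$ at all. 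Hence the only real constraint is $K\ge n+\sup_{|x|\le 1}f$, a universal constant, giving $\overline u$ with the $K/\delta$ prefactor; and $\overline u(x)/|x|\to 1$ is immediate. At the sphere $|x|=1$, $g$ is $C^1$ but not $C^2$; a $C^2$ function touching $\overline u$ from below there has Laplacian bounded above by a controlled quantity (the corner is convex, so from below the subjet Hessians are $\le$ those of the outer paraboloid-free linear piece), which only helps the inequality, so no difficulty arises.

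The main obstacle I expect is purely bookkeeping at the non-smooth spheres: one must be careful to argue via $J^{2,\pm}$ (or test functions) rather than pointwise second derivatives, and to exploit that at a \emph{convex} corner the superjet from above (resp. subjet from below, on the concave side) is constrained in a way that either makes the gradient constraint $|Dp|\le 1$ automatic or empties the relevant jet — so that in each case only \emph{one} of the two terms in the $\max$ needs to be checked, and it is the easy one. Everything else is the elementary estimate that superlinearity of $f$ lets a single affine barrier $|x|-K$ dominate $f-\delta\cdot(\text{bounded})$ for $\delta\le 1$.
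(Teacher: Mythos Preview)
Your proposal is correct, and for part $(ii)$ it is essentially the paper's argument: choose $K=n+\max_{|x|\le 1}f(x)$, check the elliptic branch on $|x|<1$, and on $|x|\ge 1$ use that $|D\overline u|=1$ so the gradient branch already gives $\max\{\cdots\}\ge 0$. One simplification you miss at $|x_0|=1$: since $\overline u\in C^1(\R^n)$, any $(p,X)\in J^{2,-}\overline u(x_0)$ has $p=D\overline u(x_0)=x_0/|x_0|$, hence $|p|=1$ and the supersolution inequality is immediate; there is no need to bound $\tr X$ via the subjet Hessian as you sketch.

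For part $(i)$ your case-by-case argument works (and you are right that $J^{2,+}\underline u(x_0)=\emptyset$ at $|x_0|=K$, by the radial touching obstruction), but the paper takes a cleaner route that avoids all case analysis. It observes that $\underline u$ is convex with Lipschitz constant $1$; hence for \emph{every} $(p,X)\in J^{2,+}\underline u(x_0)$ one automatically has $|p|\le 1$ and $X\ge 0$. Choosing $K$ so that $\underline u\le f$ on all of $\R^n$ (possible by superlinearity of $f$), one gets in one stroke
\[
\delta\,\underline u(x_0)-\tr X - f(x_0)\;\le\;\underline u(x_0)-f(x_0)\;\le\;0,
\]
with no separate treatment of the smooth region, the corner, or the origin. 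Your approach buys nothing extra here and costs the corner analysis; the paper's convexity observation is the tidier way to package the same content.
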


\begin{proof} $(i)$ Choose $K>0$ such that 
$$
\underline{u}(x)\le f(x), \quad x\in \R^n.
$$
Such a $K$ can be chosen due to the superlinear growth of $f$. 

\par As $\overline{u}$ is convex and as Lip($\underline{u}$)=1, if $(p,X)\in J^{2,+}\underline{u}(x_0)$
$$
|p|\le 1\quad \text{and}\quad X\ge 0.
$$
Hence, 
$$
\max\left\{\delta \underline{u}(x_0)-\tr X -f(x_0), |p|-1\right\}\le \max\left\{\underline{u}(x_0) - f(x_0), |p|-1\right\}\le 0.
$$
Thus $\underline{u}$ is a viscosity subsolution. 

\par $(ii)$ Choose 
$$
K:=n+\max_{|x|\le 1}f(x)
$$
and assume that $(p,X)\in J^{2,-}\underline{u}(x_0)$. If $|x_0|<1$, $\bar{u}$ is smooth in a neighborhood of $x_0$ and

$$
\begin{cases}
\bar{u}(x_0)=\frac{K}{\delta} +\frac{|x_0|^2}{2}\\
D\bar{u}(x_0)=x_0=p\\
\Delta\bar{u}(x_0)=n=\tr X
\end{cases}.
$$
Therefore,
$$
\delta \bar{u}(x_0) - \Delta \bar{u}(x_0) - h(x_0)\ge K - n - f(x_0)\ge 0,
$$
which implies
\begin{equation}\label{baruSupersoln}
\max\left\{\delta \bar{u}(x_0) - \Delta \bar{u}(x_0) - f(x_0), |D\bar{u}(x_0)|- 1\right\}\ge 0.
\end{equation}
Now suppose $|x_0|\ge 1$. $\bar{u}\in C^{1}(\R^n)$, so $p=D\bar{u}(x_0)=x_0/|x_0|$ and in particular $|D\bar{u}(x_0)|=1$. Thus
\eqref{baruSupersoln} still holds, and consequently,  $\bar{u}$ is a viscosity supersolution.
\end{proof}
The following proposition, which implies part $(ii)$ of Theorem \ref{deltathm}, follows directly from Theorem 4.1 in \cite{CIL} using $\underline{u}$ and $\bar{u}$ above. 
As the proof is a routine application of Perron's method, we omit it.

\begin{prop}
Fix $0<\delta \le 1$. There is a unique viscosity solution $u=u_\delta$ of the \eqref{deltaPDE} satisfying \eqref{ugrowth}.
\end{prop}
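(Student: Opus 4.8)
The plan is to apply Perron's method in the form of Theorem 4.1 in \cite{CIL}, using the explicit sub- and supersolutions $\underline u$ and $\bar u$ produced in Lemma \ref{sarm}. First I would verify the structural hypotheses of that theorem: the operator $F(x,r,p,X) = \max\{\delta r - \tr X - f(x), |p| - 1\}$ is continuous in all its arguments (since $f\in C^\infty$), degenerate elliptic (monotone nonincreasing in $X$, because $-\tr X$ is), and \emph{proper} in the sense that it is nondecreasing in $r$ (here the term $\delta r$ with $\delta > 0$ is exactly what supplies the needed monotonicity in the zeroth-order variable — this is the whole reason the $\delta$-problem is better behaved than \eqref{lamPDE}). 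With $\delta$ fixed and positive, Proposition \ref{comparisonU} gives the comparison principle: any subsolution with growth $\le |x|$ at infinity lies below any supersolution with growth $\ge |x|$ at infinity.

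Next I would assemble the ingredients. By Lemma \ref{sarm}, $\underline u(x) = (|x| - K)^+$ is a viscosity subsolution and $\bar u(x) = \frac{K}{\delta} + (\tfrac12|x|^2$ for $|x|\le 1$, $|x|-\tfrac12$ for $|x|\ge1)$ is a viscosity supersolution, and both satisfy the growth condition \eqref{ugrowth} since each is asymptotic to $|x|$ as $|x|\to\infty$. Moreover $\underline u \le \bar u$ everywhere (both behave like $|x|$ at infinity up to additive constants, and the constant $K/\delta \ge K$ on $\bar u$ dominates). Perron's method then produces
$$
u_\delta(x) = \sup\{\, w(x) : w \text{ is a subsolution of \eqref{deltaPDE}},\ \underline u \le w \le \bar u \,\},
$$
and the standard argument (sup of subsolutions is a subsolution after upper-semicontinuous envelope; if the lower-semicontinuous envelope failed to be a supersolution one could bump it up, contradicting maximality) shows $u_\delta$ is a viscosity solution. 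The sandwiching $\underline u \le u_\delta \le \bar u$ forces $u_\delta$ to satisfy \eqref{ugrowth} by the squeeze, since both bounds are $|x| + o(|x|)$. Uniqueness is then exactly Corollary \ref{uniquenessofU}.

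The one point requiring a little care — and the place I would expect any friction — is that Perron's method as usually stated operates on bounded domains or with bounded barriers, whereas here the domain is all of $\R^n$ and $\underline u, \bar u$ are unbounded. The resolution is that comparison (Proposition \ref{comparisonU}) does the work of the boundary condition: the growth control \eqref{ugrowth} on the barriers plays the role of the boundary data, and one checks that the Perron solution inherits it via the sandwich $\underline u \le u_\delta \le \bar u$. Since the paper explicitly says this is ``a routine application of Perron's method'' and invokes Theorem 4.1 of \cite{CIL}, I would not belabor these verifications; the substantive content has already been discharged in Proposition \ref{comparisonU} and Lemma \ref{sarm}. Accordingly the proof is essentially: cite \cite[Theorem 4.1]{CIL} with the barriers of Lemma \ref{sarm} for existence, and Corollary \ref{uniquenessofU} for uniqueness.
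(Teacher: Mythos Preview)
Your proposal is correct and matches the paper's approach exactly: the paper omits the proof, stating only that it ``follows directly from Theorem 4.1 in \cite{CIL} using $\underline{u}$ and $\bar{u}$ above'' and that it is ``a routine application of Perron's method.'' Your outline---verifying the structural hypotheses on $F$, invoking the barriers of Lemma \ref{sarm}, applying Perron's method for existence, and citing Corollary \ref{uniquenessofU} for uniqueness---is precisely the intended argument, and your remark about the growth condition \eqref{ugrowth} playing the role of boundary data via the sandwich $\underline u \le u_\delta \le \bar u$ is the right way to handle the unbounded domain.
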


\subsection{Basic estimates}\label{RegQuote}
\par Before we attempt to pass to the limit as $\delta\rightarrow 0$, we will obtain some better estimates on $u_\delta$ that will help us build an eigenvalue $\lambda^*$ and establish
estimates on a solution $u^*$ of \eqref{lamPDE} corresponding to this eigenvalue.   So far we have shown that \eqref{deltaPDE} has a unique solution $u_\delta$ that satisfies the growth condition \eqref{ugrowth}. Moreover, from the sub- and supersolutions \eqref{viscsub} and \eqref{viscsuper} above, we have for each $0<\delta\le 1$

$$
(|x|-K)^+\le u_\delta(x)\le \frac{K}{\delta}+|x|, \; x\in\R^n
$$
and 
$$
|u_\delta(x)-u_\delta(y)|\le |x-y|,\; x,y\in\R^n.
$$
Our goal now is to obtain {\it second derivative estimates} on $u_\delta$.  We first prove

\begin{prop}\label{Basic2ndDer}
There is a constant $C>0$ such that for all $0<\delta\le 1$ and (Lebesgue) almost every $x\in\R^n$, the following estimate holds
\begin{equation}\label{Basic2ndDerEst}
0\le D^2u_\delta(x)\le \frac{1}{\delta}\max_{|y|\le C}|D^2f(y)|.
\end{equation}
\end{prop}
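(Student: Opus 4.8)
The plan is to exploit the convexity of $f$ together with the comparison principle (Proposition \ref{comparisonU}) to get second-derivative bounds on $u_\delta$, by a Bernstein-type "second difference" argument carried out at the level of viscosity solutions. First note the lower bound $D^2 u_\delta \ge 0$: since $f$ is convex, the PDE \eqref{deltaPDE} is preserved under the operation of taking convex envelopes of translates, and a standard convexity argument (e.g. comparing $u_\delta$ with $\tfrac12(u_\delta(\cdot+h)+u_\delta(\cdot-h))$, which is a supersolution because $f$ is convex and the other terms are linear or convex in $u$) shows $u_\delta$ is convex; hence $D^2u_\delta\ge0$ a.e. This also justifies that $u_\delta$ is twice differentiable a.e. by Alexandrov's theorem, so pointwise a.e. statements about $D^2u_\delta$ make sense.

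For the upper bound, fix a unit vector $e$ and a step $h>0$, and consider the finite second difference
$$
\Delta_h^2 u_\delta(x):=\frac{u_\delta(x+he)+u_\delta(x-he)-2u_\delta(x)}{h^2}.
$$
I would show this is a subsolution of a linearized inequality: wherever $u_\delta$ is smooth, differencing \eqref{deltaPDE} twice in the direction $e$ gives, at points where the gradient constraint is inactive,
$$
\delta\,\Delta_h^2 u_\delta-\Delta\big(\Delta_h^2 u_\delta\big)\le \frac{f(x+he)+f(x-he)-2f(x)}{h^2}\le \max_{|y|\le C}|D^2 f(y)|,
$$
where $C$ absorbs the support of the relevant translates (here the uniform Lipschitz bound $\mathrm{Lip}(u_\delta)=1$ and the growth bounds on $u_\delta$ confine everything to a bounded region once one localizes, but in fact the estimate is local so $C$ need only bound $|x|$ plus $h\le1$). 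The gradient-constraint branch $|Du_\delta|=1$ is handled separately: on its (closure of) active set one argues that the constraint is a linear, hence convex, restriction and the second difference cannot exceed the bound coming from the interior branch — this is where one invokes the semiconvexity/doubling machinery rather than classical calculus. Making this rigorous in the viscosity framework is the step I expect to be the main obstacle: one cannot simply "difference the equation," so instead I would double variables, apply the Theorem of Sums to the function $(x,y)\mapsto u_\delta(x)+u_\delta(y)-2u_\delta(\tfrac{x+y}{2})-$ (penalty), extract matrices $X,Y$ at the maximum, and combine the sub/supersolution inequalities — exactly the pattern used in the proof of Proposition \ref{comparison}, but now applied to a "second-difference" combination of $u_\delta$ with itself.

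Once $\Delta_h^2 u_\delta$ is shown to satisfy (in the viscosity sense) the inequality $\delta w-\Delta w\le M_\delta$ with $M_\delta:=\max_{|y|\le C}|D^2 f(y)|$, the comparison principle of Proposition \ref{comparisonU} — applied to $w=\Delta_h^2 u_\delta$ versus the constant supersolution $M_\delta/\delta$, after checking the growth hypothesis (the second difference of a function with linear growth is bounded) — yields $\Delta_h^2 u_\delta(x)\le M_\delta/\delta$ for all $x$ and all $h>0$. Letting $h\to0$ at a point of twice-differentiability of $u_\delta$ gives $D^2u_\delta(x)e\cdot e\le M_\delta/\delta$ for every unit $e$, which is \eqref{Basic2ndDerEst}. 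The constant $C$ is universal because the Lipschitz and linear-growth bounds on $u_\delta$ are universal (Lemma \ref{sarm}), so the only $\delta$-dependence is the explicit $1/\delta$ factor. I would remark at the end that this crude $1/\delta$ bound is exactly what part (ii) of Theorem \ref{deltathm} will improve to a $\delta$-independent bound via a separate argument.
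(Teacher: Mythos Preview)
Your overall architecture --- convexity gives the lower bound, then a second-difference maximum principle gives the upper bound --- matches the paper's, but both halves share the same genuine gap: the handling of the gradient-constraint branch.

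For convexity, your claim that $v:=\tfrac12\bigl(u_\delta(\cdot+h)+u_\delta(\cdot-h)\bigr)$ is a supersolution of \eqref{deltaPDE} is not correct. The operator $(r,p,X)\mapsto\max\{\delta r-\tr X-f,\ |p|-1\}$ is \emph{convex}, so averages of subsolutions are subsolutions, but averages of supersolutions need not be supersolutions. Concretely, at a point where $|Du_\delta(x+h)|=|Du_\delta(x-h)|=1$ with the two gradients pointing in different directions, one has $|Dv(x)|<1$, while the supersolution property of $u_\delta$ at $x\pm h$ gives no information about $\delta u_\delta-\Delta u_\delta$ there (the $\max$ may vanish purely because of the gradient term). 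Neither branch of the supersolution inequality for $v$ is then available, and comparison cannot be invoked. The paper's fix is an $\epsilon$-trick: for $0<\epsilon<1$ one studies
\[
\mathcal{C}^\epsilon(x,y)=\epsilon\,u_\delta\!\left(\tfrac{x+y}{2}\right)-\tfrac{u_\delta(x)+u_\delta(y)}{2},
\]
which (by \eqref{ugrowth}) attains a global maximum; the first-order condition there forces $|Du_\delta(x_\epsilon)|=|Du_\delta(y_\epsilon)|=\epsilon\,\bigl|Du_\delta\bigl(\tfrac{x_\epsilon+y_\epsilon}{2}\bigr)\bigr|\le\epsilon<1$, so the gradient constraint is \emph{strictly} inactive and only the elliptic equality is in play. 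One then sends $\epsilon\to1^-$. Your doubled-variables function $u_\delta(x)+u_\delta(y)-2u_\delta\bigl(\tfrac{x+y}{2}\bigr)$ is the right object but lacks exactly this $\epsilon$-scaling, without which you cannot force strict inactivity at the maximum.

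For the upper bound the same issue recurs, and there is a second gap: you do not explain why the constant $C$ in $\max_{|y|\le C}|D^2f(y)|$ is universal. Your remark that ``the estimate is local so $C$ need only bound $|x|$ plus $h$'' is off --- the bound is asserted for a.e.\ $x\in\R^n$ with $C$ independent of $x$. The paper again maximizes an $\epsilon$-tilted quantity, $\mathcal{C}(x)=\epsilon u_\delta(x+z)-2u_\delta(x)+\epsilon u_\delta(x-z)$; at the maximizer $\hat x$ the first-order condition gives $|Du_\delta(\hat x)|\le\epsilon<1$, and then Corollary~\ref{technicalOmega} (which crucially uses the convexity just established) yields $|\hat x|\le C$ for a universal $C$. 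Only then does $f(\hat x+z)-2f(\hat x)+f(\hat x-z)\le\max_{|y|\le C_1}|D^2f(y)|\,|z|^2$ give the claimed bound. Finally, your appeal to Proposition~\ref{comparisonU} is a mismatch: that proposition is about \eqref{deltaPDE}, not about the linear inequality $\delta w-\Delta w\le M_\delta$, and $w=\Delta_h^2 u_\delta$ neither satisfies the gradient constraint nor the growth hypothesis there.
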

The above proposition follows from the following two lemmas. In the first lemma, we show that $u_\delta$ is convex by adapting the classical ``convexity maximum principle" 
argument of Korevaar \cite{KO}; in the second lemma, we estimate the second-order difference quotient of $u_\delta$ from above to obtain the upper bound in \eqref{Basic2ndDerEst}.  

\begin{lem}\label{UDeltaConvex}
$u_\delta$ is convex. 
\end{lem}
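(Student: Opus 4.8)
The plan is to run Korevaar's convexity maximum principle on the concavity function of $u_\delta$, first formally (assuming $u_\delta\in C^2$) and then rigorously in the viscosity framework by the doubling‑of‑variables technique used in Section~\ref{CompEig}. Since $u_\delta$ is continuous, it suffices to establish midpoint convexity: $u_\delta\!\left(\tfrac{x+y}{2}\right)\le\tfrac12 u_\delta(x)+\tfrac12 u_\delta(y)$ for all $x,y\in\R^n$.

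\textbf{Formal argument.} Assume $u_\delta\in C^2(\R^n)$ and set $c(x,y)=u_\delta\!\left(\tfrac{x+y}{2}\right)-\tfrac12 u_\delta(x)-\tfrac12 u_\delta(y)$. Suppose, for contradiction, that $M:=\sup_{x,y}c(x,y)>0$. The bounds $(|x|-K)^+\le u_\delta(x)\le K/\delta+|x|$ and $\mathrm{Lip}(u_\delta)\le1$ from Section~\ref{RegQuote} give $c(x,y)\le K/\delta+K$, so $c(x,y)-\epsilon(|x|^2+|y|^2)\to-\infty$ as $|(x,y)|\to\infty$; hence for each small $\epsilon>0$ this function has a maximum at some $(x_\epsilon,y_\epsilon)$, and for $\epsilon$ small the maximum is positive, whence $x_\epsilon\ne y_\epsilon$. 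Put $m_\epsilon=\tfrac{x_\epsilon+y_\epsilon}{2}$. The first‑order conditions force $Du_\delta(x_\epsilon)$ and $Du_\delta(y_\epsilon)$ to equal $Du_\delta(m_\epsilon)=:p_\epsilon$ up to a correction vanishing as $\epsilon\to0$, with $|p_\epsilon|\le1$; restricting $c$ to $t\mapsto(x_\epsilon+te,y_\epsilon+te)$ and summing over a basis gives $\Delta u_\delta(m_\epsilon)\le\tfrac12\big(\Delta u_\delta(x_\epsilon)+\Delta u_\delta(y_\epsilon)\big)+O(\epsilon)$. If the gradient constraint is inactive at these points, i.e. $|p_\epsilon|<1$, then $u_\delta$ solves $\delta u_\delta-\Delta u_\delta-f=0$ near $m_\epsilon,x_\epsilon,y_\epsilon$, so substituting $\Delta u_\delta=\delta u_\delta-f$ and rearranging,
$$
\delta\, c(x_\epsilon,y_\epsilon)\le\Big(\Delta u_\delta(m_\epsilon)-\tfrac12\Delta u_\delta(x_\epsilon)-\tfrac12\Delta u_\delta(y_\epsilon)\Big)+\Big(f(m_\epsilon)-\tfrac12 f(x_\epsilon)-\tfrac12 f(y_\epsilon)\Big)+O(\epsilon)\le O(\epsilon),
$$
where the first bracket is $\le O(\epsilon)$ and the second is $\le0$ by convexity of $f$. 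Letting $\epsilon\to0$ yields $\delta M\le0$, contradicting $M>0$ and $\delta>0$.

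\textbf{Rigorous argument.} Drop the hypothesis $u_\delta\in C^2$ and replace the calculus above by the Theorem of Sums, exactly as in the proof of Proposition~\ref{comparison}: maximize
$$
u_\delta(z)-\tfrac12 u_\delta(x)-\tfrac12 u_\delta(y)-\tfrac{1}{2\eta}\Big|z-\tfrac{x+y}{2}\Big|^2-\epsilon(|x|^2+|y|^2)
$$
over $(x,y,z)\in(\R^n)^3$. The growth bounds guarantee the maximum is attained and, as $\eta\downarrow0$, that the maximizing points cluster with $z\to\tfrac{x+y}{2}$; the Theorem of Sums then produces $(q,X)\in\overline J^{2,+}u_\delta(z_*)$ and $(q_1,X_1)\in\overline J^{2,-}u_\delta(x_*)$, $(q_2,X_2)\in\overline J^{2,-}u_\delta(y_*)$ with $q_1,q_2\to q$ as $\epsilon\to0$, and, applying the associated matrix inequality to the vector $(\xi,\xi,\xi)$, the trace bound $\tr X\le\tfrac12(\tr X_1+\tr X_2)$. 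Feeding these into the sub/supersolution inequalities for $u_\delta$ and repeating the computation of the formal argument gives the same contradiction.

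\textbf{Main obstacle.} The delicate point — already visible in the formal argument — is the degenerate first‑order term $|Du|-1$: the supersolution inequality it provides at $x_*$ (resp. $y_*$) is vacuous precisely when $|Du_\delta|=1$ there, and then one cannot substitute the equation. There are two ways around this. The cleanest is to run the whole scheme first for the regularized equation $\delta w-\Delta w+\beta(|Dw|-1)^+=f$ with $\beta$ large: since $p\mapsto\beta(|p|-1)^+$ is convex, in Korevaar's computation it is evaluated at the common gradient $p_\epsilon$ at $m_\epsilon,x_\epsilon,y_\epsilon$ and cancels, so each solution $w_\beta$ is convex with no gradient‑constraint case to handle; then $u_\delta=\lim_{\beta\to\infty}w_\beta$ locally uniformly, and a locally uniform limit of convex functions is convex. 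Alternatively one argues directly that at a maximizer of the penalized concavity function the constraint must be inactive. I expect the only real work to be the uniform estimates for, and convergence of, $w_\beta$ (equivalently, the bookkeeping that renders the degenerate term harmless); the remainder is the standard Korevaar/doubling machinery already used in Section~\ref{CompEig}.
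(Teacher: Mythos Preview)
Your overall strategy—Korevaar's convexity maximum principle, made rigorous via doubling of variables and the Theorem of Sums—is exactly the paper's, and you correctly isolate the one real difficulty: at the supersolution points the gradient constraint may be active, so the supersolution inequality is vacuous and you cannot substitute the equation. But you miss the simple device that resolves this, and your proposed detours (PDE penalization, or an unspecified direct argument that the constraint is inactive) are either much heavier or not carried out.

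The paper's trick is to scale the \emph{midpoint} term by $\epsilon\in(0,1)$: set
\[
\mathcal C^\epsilon(x,y)=\epsilon\,u_\delta\!\left(\tfrac{x+y}{2}\right)-\tfrac12 u_\delta(x)-\tfrac12 u_\delta(y).
\]
This accomplishes two things at once. First, since $u_\delta(x)/|x|\to1$, one gets $\mathcal C^\epsilon(x,y)\to-\infty$ as $|(x,y)|\to\infty$ with no extra quadratic penalty. Second, and decisively, the first-order condition at a maximizer reads $\epsilon\,Du_\delta(m_\epsilon)=Du_\delta(x_\epsilon)=Du_\delta(y_\epsilon)$, whence
\[
|Du_\delta(x_\epsilon)|=|Du_\delta(y_\epsilon)|=\epsilon\,\bigl|Du_\delta(m_\epsilon)\bigr|\le\epsilon<1,
\]
so the gradient constraint is \emph{automatically} strictly inactive at $x_\epsilon,y_\epsilon$, and the supersolution inequality there yields $\delta u_\delta-\Delta u_\delta-f=0$. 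Your computation then goes through verbatim and gives $\delta\mathcal C^\epsilon\le0$; send $\epsilon\to1^-$. In the rigorous version one quadruples the variables (penalize $|x-x'|^2+|y-y'|^2$), and the same $\epsilon$-scaling again forces $|p_\eta|\le\epsilon/2<1/2$ in the sub-jet of $\tfrac12 u_\delta$ at $x'_\eta,y'_\eta$.

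By contrast, your $-\epsilon(|x|^2+|y|^2)$ penalty spoils the exact gradient matching, so you cannot conclude $|Du_\delta(x_\epsilon)|<1$; and in the penalized-PDE route the same mismatch makes the $\beta$-terms differ by an error of order $\beta\cdot\epsilon|x_\epsilon|$, which is not obviously small. Building $w_\beta$ on all of $\R^n$ with the right growth and proving $w_\beta\to u_\delta$ is doable but is precisely the work you hoped to avoid. The $\epsilon$-scaling trick is the missing idea.
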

\begin{proof}
1.  We first assume $u\in C^2(\R^n)$ and for ease of notation, we write $u$ for $u_\delta $. Fix  $0< \epsilon <1$ and set 
$$
{\mathcal C}^\epsilon(x,y)=\epsilon u\left(\frac{x+y}{2}\right)-\frac{u(x) + u(y)}{2}, \quad x,y\in\R^n.
$$
We aim to bound ${\mathcal C}^\epsilon$ from above and later send $\epsilon\rightarrow 1^-.$ 

\par We first claim that ${\mathcal C}^\epsilon$ achieves its maximum value at some point $(x_\epsilon, y_\epsilon)\in \R^n\times\R^n$; it suffices
to show 
\begin{equation}\label{ConVexUClaim}
\lim_{|(x,y)|\rightarrow \infty}{\mathcal C}^\epsilon(x,y)=-\infty.
\end{equation}
Let $(x_k,y_k)\in \R^n\times \R^n$ be such that 

$$
|x_k|+|y_k|\rightarrow \infty
$$
as $k\rightarrow \infty$. 
Let $N$ be large enough so that $|x_k|+|y_k|>0$ for $k\ge N$. Note that for $k\ge N$

\begin{eqnarray}
\frac{{\mathcal C}^\epsilon(x_k,y_k)}{|x_k|+|y_k|} &=& \epsilon \frac{u\left(\frac{x_k+y_k}{2}\right)}{|x_k|+|y_k|} -\frac{1}{2}\left\{\left(\frac{|x_k|}{|x_k|+|y_k|}\right)\frac{u(x_k)}{|x_k|}+ 
\left(\frac{|y_k|}{|x_k|+|y_k|}\right)\frac{u(y_k)}{|y_k|}\right\} \nonumber \\
&\le & \frac{\epsilon}{2} \frac{u\left(\frac{x_k+y_k}{2}\right)}{\left|\frac{x_k+y_k}{2}\right|} -\frac{1}{2}\left\{\left(\frac{|x_k|}{|x_k|+|y_k|}\right)\frac{u(x_k)}{|x_k|}+ 
\left(\frac{|y_k|}{|x_k|+|y_k|}\right)\frac{u(y_k)}{|y_k|}\right\} \nonumber
\end{eqnarray}
when of course  $|x_k+y_k|>0.$

\par If $|x_k+y_k|$ happens to be bounded, then

$$
\limsup_{k\rightarrow \infty }\frac{{\mathcal C}^\epsilon(x_k,y_k)}{|x_k|+|y_k|} \le -\frac{1}{2}<0.
$$
While if  $|x_k+y_k|\rightarrow \infty$, as $k\rightarrow \infty$, we still have
$$
\limsup_{k\rightarrow \I}\frac{{\mathcal C}^\epsilon(x_k,y_k)}{|x_k|+|y_k|} \le \frac{-(\epsilon -1)}{2}<0.
$$
Consequently, $\limsup_{k\rightarrow\infty }{\mathcal C}^\epsilon(x_k,y_k)=-\infty$. The claim \eqref{ConVexUClaim} follows since $(x_k,y_k)$ was an arbitrary unbounded sequence.

\par 2. As $(x_\epsilon,y_\epsilon)$ is a maximizing point for ${\mathcal C}^\epsilon$,
$$
0=D_x{\mathcal C}^\epsilon(x_\epsilon, y_\epsilon)=\frac{\epsilon}{2}Du\left(\frac{x_\epsilon+y_\epsilon}{2}\right)-\frac{1}{2}Du(x_\epsilon)
$$
and
$$
0=D_y{\mathcal C}^\epsilon(x_\epsilon, y_\epsilon)=\frac{\epsilon}{2}Du\left(\frac{x_\epsilon+y_\epsilon}{2}\right)-\frac{1}{2}Du(y_\epsilon).
$$
Thus,
\begin{equation}\label{firstorderC}
\epsilon Du\left(\frac{x_\epsilon+y_\epsilon}{2}\right)=Du(x_\epsilon)=Du(y_\epsilon). \nonumber
\end{equation}
The function $v\mapsto {\mathcal C}^\epsilon(x_\epsilon + v, y_\epsilon + v)$ has a maximum at $v=0$ which implies

\begin{equation}\label{secondorderC}
0\ge \epsilon \Delta u\left(\frac{x_\epsilon+y_\epsilon}{2}\right)- \frac{\Delta u(x_\epsilon)+\Delta u(y_\epsilon)}{2}. \nonumber
\end{equation}
Since, 
$$
|Du(x_\epsilon)|=|Du(y_\epsilon)|= \epsilon\left|Du\left(\frac{x_\epsilon+y_\epsilon}{2}\right)\right|\le \epsilon <1,
$$
we have
$$
\delta u(z) - \Delta u(z) - f(z)=0,\quad z=x_\epsilon, y_\epsilon.
$$
Combining the above inequalities gives
\begin{eqnarray}
\delta {\mathcal C}^\epsilon(x,y)&\le &\delta {\mathcal C}^\epsilon(x_\epsilon, y_\epsilon)  \nonumber\\
&= &\epsilon \delta u\left(\frac{x_\epsilon+y_\epsilon}{2}\right)-\frac{\delta u(x_\epsilon) + \delta u(y_\epsilon)}{2} \nonumber \\
&\le &\epsilon \Delta u\left(\frac{x_\epsilon+y_\epsilon}{2}\right)-\frac{\Delta u(x_\epsilon) + \Delta u(y_\epsilon)}{2} \nonumber \\
&& \epsilon f\left(\frac{x_\epsilon + y_\epsilon}{2}\right)-\frac{f(x_\epsilon) + f(y_\epsilon)}{2} \nonumber \\
&\le &f\left(\frac{x_\epsilon + y_\epsilon}{2}\right)-\frac{f(x_\epsilon) + f(y_\epsilon)}{2} \nonumber \\
&\le& 0 \nonumber
\end{eqnarray}
by the convexity of $f$, for each $x,y\in\R^n.$ Sending $\epsilon\rightarrow 1^-$, we conclude that $u$ is convex.

\par 3. To make this formal argument rigorous, we employ a doubling the variables type of argument. Moreover, since ${\mathcal C}^\epsilon$ 
above is a type of doubling the variables function, it is appropriate to ``quadruple the variables." This can be done by fixing $0< \epsilon  <1$ and setting
$$
w^\epsilon(x,y,x',y')=\epsilon u\left(\frac{x+y}{2}\right)-\frac{u(x') + u(y')}{2}, \quad x,y,x', y'\in\R^n,
$$
and for $\eta>0$, setting

$$
\varphi_\eta(x,y,x',y')=\frac{1}{2\eta}\left\{ |x-x'|^2 + |y-y'|^2\right\}, \quad x,y,x', y'\in\R^n.
$$
Notice that
\begin{eqnarray}
(w^\epsilon -\varphi_\eta)(x,y,x',y')&=& \epsilon \left\{u\left(\frac{x+y}{2}\right) - u\left(\frac{x'+y'}{2}\right)\right\} - \frac{1}{2\eta}\left\{ |x-x'|^2 + |y-y'|^2\right\}\nonumber \\
                                                             &  & + \epsilon u\left(\frac{x'+y'}{2}\right) - \frac{u(x') + u(y')}{2}\nonumber \\
                                                             &\le &\left( \frac{|x-x'|}{2} + \frac{|y-y'|}{2} -  \frac{1}{2\eta}\left\{ |x-x'|^2 + |y-y'|^2\right\}\right) \nonumber  \\
                                                             && + \epsilon u\left(\frac{x'+y'}{2}\right)-\frac{u(x') + u(y')}{2}. \nonumber
\end{eqnarray}
From our formal arguments above, it follows that 
$$
\lim_{|(x,y,x',y')|\rightarrow \infty}(w^\epsilon-\varphi_\eta)(x,y,x',y')=-\infty
$$
and, in particular, that there is $(x_\eta,y_\eta,x_\eta',y_\eta')\in \R^n\times \R^n\times \R^n\times \R^n$ maximizing $w^\epsilon-\varphi_\eta.$ By the Theorem of Sums (Theorem 3.2 in \cite{CIL}), for each $\rho > 0$ there are $X,Y\in {\mathcal S}(2n)$ such that 
{\small
$$
\left( D_x\varphi_\eta(x_\eta,y_\eta, x'_\eta, y'_\eta),D_y\varphi_\eta(x_\eta,y_\eta, x'_\eta, y'_\eta), X\right)\in \overline{J}^{2,+}\left((x,y)\mapsto \epsilon u\left(\frac{x+y}{2}\right)\right)\Big|_{x=x_\eta, y=y_\eta},
$$
$$
\left( -D_{x'}\varphi_\eta(x_\eta,y_\eta, x'_\eta, y'_\eta),-D_{y'}\varphi_\eta(x_\eta,y_\eta, x'_\eta, y'_\eta), Y\right)\in \overline{J}^{2,-}\left((x',y')\mapsto \frac{u(x')+u(y')}{2}\right)\Big|_{x'=x'_\eta, y'=y'_\eta},
$$
}
and 
\begin{equation}\label{matrixIneqConvex}
\left(
\begin{array}{cc}
X & 0\\
0 & -Y
\end{array}
\right)
\le A +\rho A^2.
\end{equation}
Here 
$$
A=D^2\varphi_\eta(x_\eta,y_\eta, x'_\eta, y'_\eta)=\frac{1}{\eta}
\left(
\begin{array}{cc}
I_{2n} & -I_{2n}\\
-I_{2n} & I_{2n}
\end{array}
\right).
$$
Note that the matrix inequality \eqref{matrixIneqConvex} implies $X\le Y$.

\par Set 
$$
p_\eta := D_x\varphi_\eta(x_\eta,y_\eta, x'_\eta, y'_\eta)=-D_{x'}\varphi_\eta(x_\eta,y_\eta, x'_\eta, y'_\eta)=\frac{x_\eta - x'_\eta}{\eta},
$$
$$
q_\eta := D_y\varphi_\eta(x_\eta,y_\eta, x'_\eta, y'_\eta)=-D_{y'}\varphi_\eta(x_\eta,y_\eta, x'_\eta, y'_\eta)=\frac{y_\eta - y'_\eta}{\eta},
$$
and also write
$$
X=\left(
\begin{array}{cc}
X_1 & X_2 \\
X_3 & X_4
\end{array}
\right)
\quad \text{and} \quad 
Y=\left(
\begin{array}{cc}
Y_1 & Y_2 \\
Y_3 & Y_4
\end{array}
\right)
$$
for appropriate $n\times n$ matrices $X_i, Y_i$ $i=1,\dots, 4$. As $X,Y\in {\mathcal S}(2n)$,  $X_1, X_4, Y_1, Y_4\in {\mathcal S}(n)$  and $X_2^t=X_3,\;  Y_2^t=Y_3$.

\par By direct verification, we have 
\begin{equation}\label{jetsconvex}
\begin{cases}
(p_\eta, X_1)\in J^{2,+}\left(x\mapsto \frac{\epsilon}{2}u\left(\frac{x+y_\eta}{2}\right)\right)\Big|_{x=x_\eta}\\
(q_\eta, X_4)\in J^{2,+} \left(y\mapsto \frac{\epsilon}{2}u\left(\frac{x_\eta +y}{2}\right)\right)\Big|_{y=y_\eta}\\
(p_\eta, Y_1)\in J^{2,-}\left(\frac{1}{2}u\right)(x'_\eta)\\
(q_\eta, Y_4)\in J^{2,-}\left(\frac{1}{2}u\right)(y'_\eta)
\end{cases}.
\end{equation}
Since the Lipschitz constant of the function $x\mapsto \epsilon u((x+y_\eta)/2)$ is less than or equal $\epsilon/2$, $|p_\eta|\le \epsilon/2<1/2$. Since
$p_\eta\in J^{1,-}\left(\frac{1}{2}u\right)(x'_\eta)$ and $u$ is a viscosity solution of \eqref{deltaPDE},

$$
\delta u(x'_\eta) - \tr Y_1 - f(x'_\eta)=0.
$$
Likewise, we conclude that 

$$
\delta u(y'_\eta) - \tr Y_4 - f(y'_\eta)=0.
$$

\par As $u$ is a viscosity solution of \eqref{deltaPDE}, we have from the first two inclusions in \eqref{jetsconvex}

$$
\delta u\left(\frac{x_\eta +y_\eta}{2}\right) -\frac{\tr X_1}{\epsilon} -  f\left(\frac{x_\eta +y_\eta}{2} \right)\le 0
$$
and

$$
\delta u\left(\frac{x_\eta +y_\eta}{2} \right) -\frac{\tr X_4}{\epsilon} -  f\left(\frac{x_\eta +y_\eta}{2} \right)\le 0.
$$
Averaging the two above inequalities gives

$$
\delta u\left(\frac{x_\eta +y_\eta}{2}\right) -\frac{\tr[X_1 +X_4]}{2\epsilon} -  f\left(\frac{x_\eta +y_\eta}{2}\right)\le 0.
$$
Altogether we have

\begin{eqnarray}\label{usingHconvex}
\delta \left\{\epsilon u\left(\frac{x_\eta +y_\eta}{2}\right) - \frac{u(x_\eta')+u(y_\eta')}{2}  \right\} &\le &\frac{1}{2}\tr[X_1- Y_1] + \frac{1}{2}\tr[X_4- Y_4] \nonumber \\
& &+  \epsilon f\left(\frac{x_\eta +y_\eta}{2}\right) - \frac{f(x_\eta')+f(y_\eta')}{2} \nonumber\\
&= & \frac{1}{2}\tr[X-Y] +  f\left(\frac{x_\eta +y_\eta}{2}\right) - \frac{f(x_\eta')+f(y_\eta')}{2} \nonumber \\
&\le &f\left(\frac{x_\eta +y_\eta}{2}\right) - \frac{f(x_\eta')+f(y_\eta')}{2}.
\end{eqnarray}

\par 4.  Another simple estimate for $w^\epsilon -\varphi_\eta$ is
\begin{eqnarray}
(w^\epsilon-\varphi_\eta)(x,y,x',y')&=& \epsilon u\left(\frac{x+y}{2}\right) - \frac{u(x)+u(y)}{2} +\nonumber \\
&& \frac{u(x)-u(x') + u(y) - u(y')}{2}- \frac{1}{2\eta}\left\{ |x-x'|^2 + |y-y'|^2\right\} \nonumber \\
&\le & \epsilon u\left(\frac{x+y}{2}\right) - \frac{u(x)+u(y)}{2} +\nonumber \\
&& \frac{|x-x'| + |y - y'|}{2}- \frac{1}{2\eta}\left\{ |x-x'|^2 + |y-y'|^2\right\} \nonumber \\
&\le &  \epsilon u\left(\frac{x+y}{2}\right) - \frac{u(x)+u(y)}{2} + \frac{\eta}{2}. \nonumber
\end{eqnarray}
This estimate implies that $(x_\eta, y_\eta)$ 
is a bounded sequence.  For otherwise, 
$(w^\epsilon-\varphi_\eta)(x_\eta,y_\eta,x_\eta', y_\eta')$ tends to $-\infty$ (by the above estimate on $w^\epsilon -\varphi_\eta $)
while 
\begin{eqnarray}
(w^\epsilon-\varphi_\eta)(x_\eta,y_\eta,x_\eta', y_\eta')&=&\max_{x,y,x',y'}(w^\epsilon-\varphi_\eta)(x,y,x', y') \nonumber \\
&\ge &(w^\epsilon-\varphi_\eta)(0,0,0, 0) \nonumber \\
& =& (\epsilon -1) u(0) \nonumber \\
&>& -\infty, \nonumber
\end{eqnarray}
for each $\eta>0.$  Likewise,  $(x'_\eta, y'_\eta)$ is a bounded sequence for all $\eta>0$ and small.  By Lemma 3.1 in \cite{CIL}, there is a cluster point $(x_\epsilon,y_\epsilon, x_\epsilon, y_\epsilon)$ of the sequence $((x_\eta, y_\eta,x_\eta',y_\eta'))_{\eta>0}$ through some sequence of $\eta\rightarrow 0$
that maximizes the function 
$$
(x,y)\mapsto \epsilon u\left(\frac{x+y}{2}\right) - \frac{u(x)+u(y)}{2}. 
$$
Passing to the limit through this sequence of $\eta$ tending to $0$ in \eqref{usingHconvex} gives for any $x,y\in\R^n$

$$
\epsilon u\left(\frac{x+y}{2}\right) - \frac{u(x)+u(y)}{2} \le f\left(\frac{x_\epsilon+y_\epsilon}{2}\right) - \frac{f(x_\epsilon)+f(y_\epsilon)}{2}\le 0
$$
due to the convexity of $f$. Sending $\epsilon\rightarrow 1^-$ establishes the claim. 
\end{proof} 
Aleksandrov's Theorem (section 6.4 of \cite{EG}) now implies the following corollary.

\begin{cor}
$u_\delta$ is twice differentiable at (Lebesgue) almost every point in $\R^n$.
\end{cor}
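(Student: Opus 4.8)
The plan is to invoke Aleksandrov's Theorem directly, since all the substantive work has already been done. By Lemma \ref{UDeltaConvex}, the solution $u_\delta$ is a convex function on all of $\R^n$; moreover, from the sub- and supersolution bounds \eqref{viscsub} and \eqref{viscsuper} established above, $u_\delta$ is finite-valued and locally (indeed globally, with constant $1$) Lipschitz. Aleksandrov's Theorem (section 6.4 of \cite{EG}) asserts precisely that a convex function which is finite on an open convex set is twice differentiable almost everywhere there, in the sense that for Lebesgue-a.e. $x_0\in\R^n$ there exist $p\in\R^n$ and $X\in\mathcal{S}(n)$ with
$$
u_\delta(x)=u_\delta(x_0)+p\cdot(x-x_0)+\frac{1}{2}X(x-x_0)\cdot(x-x_0)+o(|x-x_0|^2)
$$
as $|x-x_0|\to 0$. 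Applying this with $u=u_\delta$ gives the claim at once; there is nothing further to prove.

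The only point worth recording is the role the corollary plays downstream, and it is here rather than in the argument itself that the real interest lies. Twice differentiability a.e. means that at a.e. point $x_0$ one has $\bigl(Du_\delta(x_0),D^2u_\delta(x_0)\bigr)\in J^{2,+}u_\delta(x_0)\cap J^{2,-}u_\delta(x_0)$, so the a.e.-defined pointwise Hessian $D^2u_\delta$ is a legitimate test object in \emph{both} viscosity inequalities for \eqref{deltaPDE}. This is exactly the fact needed to make sense of, and then establish, the pointwise second-derivative bound \eqref{Basic2ndDerEst} of Proposition \ref{Basic2ndDer} by a second-order difference-quotient estimate. Thus the present corollary is merely the bridge between the convexity lemma and that proposition; the only genuine obstacle — proving convexity via the Korevaar-type quadrupling argument — has already been overcome in Lemma \ref{UDeltaConvex}, so no difficulty remains at this step.
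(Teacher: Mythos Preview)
Your proof is correct and matches the paper's approach exactly: the paper simply states that Aleksandrov's Theorem (section 6.4 of \cite{EG}) implies the corollary immediately from the convexity of $u_\delta$ established in Lemma \ref{UDeltaConvex}, with no further argument given. Your additional commentary on the downstream role of the corollary is accurate but goes beyond what the paper records at this point.
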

Since $u_\delta$ is convex and $f$ is superlinear, we expect
$$
\delta u_\delta(x) -\Delta u_\delta(x) -f(x)\le \delta u_\delta(x)-f(x)\le K + |x| - f(x)<0
$$
for all $x$ large enough and all $0<\delta\le 1$. Here $K$ is the constant in \eqref{viscsuper}. In other words, if $|Du_\delta(x)|<1$, then $|x|\le C$ for some $C$ independent 
of $0<\delta\le 1$. We give a precise statement of this in terms of jets.

\begin{cor}\label{technicalOmega}
There is a constant $C>0$, independent of $0<\delta\le 1$, such that if $|x|\ge C$ and $p\in J^{1,-}u_\delta(x)$, then $|p|=1$.
\end{cor}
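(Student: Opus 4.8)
The plan is to exploit the upper bound $u_\delta(x)\le \tfrac{K}{\delta}+|x|$ together with the superlinearity of $f$, using the supersolution property only through the gradient constraint. Suppose toward a contradiction that $|x_0|\ge C$ and $p\in J^{1,-}u_\delta(x_0)$ with $|p|<1$, where $C$ is to be chosen. Since $u_\delta$ is convex and twice differentiable a.e.\ (by the preceding corollary), and since $(p,Y)\in J^{2,-}u_\delta(x_0)$ for some $Y\ge 0$ once $p$ is a subjet gradient — more precisely, convexity guarantees that for a subgradient $p$ one can find $Y\in\mathcal S(n)$ with $Y\ge 0$ and $(p,Y)\in J^{2,-}u_\delta(x_0)$ — the supersolution inequality \eqref{jetSuper} at $x_0$ forces, in the case $|p|<1$,
$$
\delta u_\delta(x_0)-\tr Y - f(x_0)\ge 0.
$$
Since $Y\ge 0$ we have $\tr Y\ge 0$, hence $\delta u_\delta(x_0)\ge f(x_0)$, and combining with the upper bound on $u_\delta$ gives
$$
f(x_0)\le \delta u_\delta(x_0)\le K + |x_0|.
$$

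First I would record that by the growth hypothesis \eqref{hgrowth} there is a radius $C>0$ (depending only on $f$ and the universal constant $K$ from Lemma \ref{sarm}, hence independent of $\delta\in(0,1]$) such that $f(x)>K+|x|$ whenever $|x|\ge C$. With this $C$, the displayed inequality $f(x_0)\le K+|x_0|$ is impossible once $|x_0|\ge C$. This contradiction shows that for $|x_0|\ge C$ every $p\in J^{1,-}u_\delta(x_0)$ must satisfy $|p|\ge 1$; but $\mathrm{Lip}(u_\delta)\le 1$ already gives $|p|\le 1$ for any subjet gradient, so in fact $|p|=1$, which is the claim.

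The one point requiring care — and the main obstacle — is the justification that a first-order subjet gradient $p\in J^{1,-}u_\delta(x_0)$ of a convex function can be upgraded to a second-order subjet $(p,Y)\in J^{2,-}u_\delta(x_0)$ with $Y\ge 0$, so that the supersolution test \eqref{jetSuper} applies with a nonnegative-trace matrix. For a convex function this is standard: if $p$ is a subgradient at $x_0$ then $u_\delta(x)\ge u_\delta(x_0)+p\cdot(x-x_0)$ globally, so $(p,0)\in J^{2,-}u_\delta(x_0)$ outright; no limiting argument via $\overline J^{2,-}$ is even needed. (Alternatively, invoke the a.e.\ twice-differentiability from Aleksandrov and a closure argument, but the direct observation is cleaner.) With $Y=0$ the inequality reads $\delta u_\delta(x_0)-f(x_0)\ge 0$ when $|p|<1$, and the rest proceeds as above. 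I would remark in passing that the constant $C$ can be taken to be the same as, or the maximum of, the constants $C$ appearing in Proposition \ref{Basic2ndDer}, so that a single radius governs both the nondegeneracy of the gradient and the second-derivative bound.
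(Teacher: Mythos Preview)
Your argument is correct and follows essentially the same route as the paper: the key step in both is that convexity upgrades any $p\in J^{1,-}u_\delta(x_0)$ to $(p,0)\in J^{2,-}u_\delta(x_0)$, after which the supersolution inequality together with the bound $\delta u_\delta(x)\le K+|x|$ and the superlinearity of $f$ force $|p|=1$ for $|x_0|$ large. Your initial detour through a general $Y\ge 0$ is unnecessary (as you yourself note), and the paper simply takes $Y=0$ from the start; otherwise the proofs coincide.
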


\begin{proof}
Let $K$ be the constant in the \eqref{viscsuper} and choose $C$ so large that 
$$
K+|z|<f(z), \quad |z|\ge C.
$$
Recall that $J^{1,-}u_\delta(x)=\underline{\partial}u_\delta(x)$ by the convexity of $u_\delta$ (see Proposition 4.7 in \cite{BC}). Here 
$$
\underline{\partial}u(x)=\left\{p\in \R^n: u(y)\ge u(x) + p\cdot (y-x) \; \text{for all $y$ $\in \R^n$}\right\}
$$
is the {\it subdifferential} of $u$ at the point $x$. 

\par Moreover, $(p,0)\in J^{2,-}u_\delta(x)$,
and so 
$$
\max\{\delta u_\delta(x) - f(x), |p|-1\}\ge 0.
$$
As 
$$
\delta u_\delta(x)-f(x)\le K+|x|-f(x)<0,$$
$|p|=1$.
\end{proof}


\begin{lem}
Let $C_1>C$, where $C$ is the constant in the previous corollary. For almost every $x\in \R^n$, 

$$
D^2u_\delta(x)\le \frac{1}{\delta}\max_{|y|\le C_1}|D^2f(y)|
$$
for all $0<\delta<1$.
\end{lem}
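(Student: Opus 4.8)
The goal is a uniform-in-$\delta$ upper bound on $D^2 u_\delta$ away from the region where $|Du_\delta|<1$. The natural strategy is to estimate the second difference quotient
$$
\Delta_h^2 u_\delta(x) := u_\delta(x+h) + u_\delta(x-h) - 2u_\delta(x)
$$
from above, uniformly over unit vectors $\xi = h/|h|$, and then recover the bound on $D^2 u_\delta$ a.e.\ via Aleksandrov's theorem (using the corollary just proved that $u_\delta$ is twice differentiable a.e.). Concretely, I would fix $\epsilon\in(0,1)$, introduce the auxiliary function
$$
\Phi^\epsilon(x,y,z) = \epsilon\,\frac{u_\delta(x)+u_\delta(y)}{2} - u_\delta(z),
$$
restricted to the ``midpoint slice'' $z = (x+y)/2$, and look at where it is maximized after subtracting the usual penalization $\varphi_\eta$ to localize. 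This is the exact mirror image of the convexity argument in Lemma~\ref{UDeltaConvex} (there one bounds $\epsilon u(\tfrac{x+y}{2}) - \tfrac{u(x)+u(y)}{2}$ from above; here one bounds $\epsilon\tfrac{u(x)+u(y)}{2} - u(\tfrac{x+y}{2})$ from above), so the machinery of quadrupling the variables, applying the Theorem of Sums, and extracting jets carries over almost verbatim.

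**Key steps.** First I would run the doubling/quadrupling argument to produce a maximizing configuration and, via the Theorem of Sums, matrices $X \ge Y$ with $(p,X_i)$ in the appropriate superjets of $\tfrac{\epsilon}{2}u_\delta$ at the two outer points and $(p,Y)$ in the subjet of $u_\delta$ at the midpoint $z$. Second — and this is the crucial point that forces the hypothesis $C_1 > C$ — I would use Corollary~\ref{technicalOmega} to argue that the midpoint $z$ at which we evaluate the subjet lies in the ball $\{|z|\le C\}$: since $(p,Y)\in J^{2,-}u_\delta(z)$ gives $p\in J^{1,-}u_\delta(z)$, if $|z|\ge C$ then $|p|=1$, but the first-order conditions at the maximum force $|p|$ to be $\epsilon$ times a gradient bounded by $1$, hence $|p|\le\epsilon<1$, a contradiction. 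Therefore $z$ stays in $\{|z|\le C_1\}$ (with room to spare, absorbing the localization error as $\eta\to0$), and on that ball $u_\delta$ satisfies the \emph{equality} $\delta u_\delta - \Delta u_\delta - f = 0$ in the viscosity sense, so $\operatorname{tr} Y = \delta u_\delta(z) - f(z)$. Third, at the outer points $u_\delta$ is only a subsolution, giving $\operatorname{tr} X_i \le \epsilon(\delta u_\delta(\cdot) - f(\cdot))$ there; combining with $X\le Y$, averaging, and using convexity of $u_\delta$ together with a second-order Taylor estimate on $f$ restricted to $\{|y|\le C_1\}$, one gets
$$
\epsilon\,\frac{u_\delta(x)+u_\delta(y)}{2} - u_\delta\!\left(\frac{x+y}{2}\right) \le \frac{1}{4\delta}\max_{|y|\le C_1}|D^2 f(y)|\,|x-y|^2,
$$
and letting $\epsilon\to1^-$ then specializing $x = z_0 + h$, $y = z_0 - h$ yields $\Delta_h^2 u_\delta(z_0) \le \tfrac{1}{\delta}\max_{|y|\le C_1}|D^2f(y)|\,|h|^2$ for all $z_0$. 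Dividing by $|h|^2$ and sending $h\to0$ at a point of twice-differentiability gives $D^2 u_\delta(z_0)\xi\cdot\xi \le \tfrac{1}{\delta}\max_{|y|\le C_1}|D^2f(y)|$ for every unit $\xi$, which is the claim.

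**Main obstacle.** The routine part is the quadrupling-of-variables bookkeeping, which is essentially copied from Lemma~\ref{UDeltaConvex}. The genuinely delicate step is controlling the \emph{location} of the midpoint where the subjet equation is used: one must ensure that as $\eta\to0$ the maximizing midpoint $z_\eta$ does not escape to infinity (otherwise we only have a subsolution inequality there, not the equality, and the quadratic bound on $f$ fails). This is handled exactly as in the convexity lemma — a cheap lower bound $(w^\epsilon-\varphi_\eta)(\cdots)\ge(\epsilon-1)u_\delta(0)$ at the origin forces boundedness of the maximizers — but one then needs the sharper fact, from Corollary~\ref{technicalOmega}, that the limiting midpoint actually sits inside $\{|z|\le C\}$, which is what makes $C_1 > C$ (rather than merely $C_1$ large) the right hypothesis and lets us invoke the equality form of the PDE with the $L^\infty$ bound on $D^2 f$ over $\{|y|\le C_1\}$.
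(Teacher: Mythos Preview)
Your mirror-image approach has a gap at the very first step: the auxiliary function
\[
\Phi^\epsilon(x,y)=\epsilon\,\frac{u_\delta(x)+u_\delta(y)}{2}-u_\delta\!\left(\frac{x+y}{2}\right)
\]
is \emph{unbounded above} on $\R^n\times\R^n$. Take $x=Re_1$, $y=-Re_1$; the midpoint is $0$ and $\Phi^\epsilon(Re_1,-Re_1)=\epsilon u_\delta(Re_1)-u_\delta(0)\to+\infty$ as $R\to\infty$. So there is no maximizer to feed into the Theorem of Sums. The penalization $\varphi_\eta$ you invoke is the viscosity device that separates copies of the \emph{same} variable; it does nothing to control $|x-y|$. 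And the ``cheap lower bound at the origin'' argument you cite from the convexity lemma works precisely because \emph{that} auxiliary function tends to $-\infty$ at infinity; here the sign is flipped and the argument runs backwards. Consequently the claimed global inequality $\epsilon\tfrac{u_\delta(x)+u_\delta(y)}{2}-u_\delta(\tfrac{x+y}{2})\le \tfrac{1}{4\delta}M|x-y|^2$ cannot be obtained by maximizing $\Phi^\epsilon$ over all $(x,y)$.

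The paper's fix is to break the $x\leftrightarrow y$ symmetry: \emph{fix} the displacement $z$ with $0<|z|<C_1-C$ and maximize only over the midpoint, i.e.\ study
\[
\mathcal C(x)=\epsilon u_\delta(x+z)+\epsilon u_\delta(x-z)-2u_\delta(x).
\]
Since $\epsilon<1$ and $u_\delta(x)/|x|\to1$, this tends to $-\infty$ as $|x|\to\infty$ and so has a maximizer $\hat x$. The first-order condition gives $|Du_\delta(\hat x)|\le\epsilon<1$, whence $|\hat x|\le C$ by Corollary~\ref{technicalOmega}, and then $|\hat x\pm z|<C_1$---this is exactly why one takes $|z|<C_1-C$ and why $C_1>C$ enters. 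From there equality at $\hat x$, subsolution at $\hat x\pm z$, the second-order condition, and Taylor on $f$ over $\{|y|\le C_1\}$ give $\delta\mathcal C(x)\le \max_{|y|\le C_1}|D^2f(y)|\,|z|^2$; send $\epsilon\to1^-$ and divide by $|z|^2$. The rigorous version \emph{triples} (not quadruples) the variables. Your third step and the use of Corollary~\ref{technicalOmega} are the right instincts, but they need this fixed-displacement setup underneath them.
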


\begin{proof}
1. Fix $0< \epsilon <1$, $0<|z|<C_1-C$, and set
$$
{\mathcal C}(x):=\epsilon u_\delta(x+z) - 2u_\delta(x) + \epsilon u_\delta(x-z), \quad x\in\R^n.
$$
As in previous arguments, we will give a formal proof (i.e. assuming $u\in C^2(\R^n)$)
first and then later describe how to our justify arguments.   For ease of notation, we write $u$ for $u_\delta$. 

\par As $\lim_{|x|\rightarrow \infty}u(x)/|x|=1$, 
$$
\lim_{|x|\rightarrow \infty}{\mathcal C}(x)=-\infty.
$$
Thus, there is $\hat{x}\in\R^n$ such that 
$$
{\mathcal C}(\hat{x})=\max_{x\in\R^n}{\mathcal C}(x).
$$
At $\hat{x}$, we have

$$
\begin{cases}
0=D{\mathcal C}(\hat{x})=\epsilon Du(\hat{x}+z) - Du(\hat{x})+ \epsilon Du(\hat{x}-z)\\
0\ge \Delta {\mathcal C}(\hat{x})=\epsilon \Delta u(\hat{x}+z) - \Delta u(\hat{x})+ \epsilon \Delta u(\hat{x}-z)
\end{cases}.
$$
Thus, 
$$
|Du(\hat{x})|=\frac{\epsilon}{2}|Du(\hat{x}+z)+Du(\hat{x}-z)|\le \epsilon <1
$$
and in particular
$$
\begin{cases}
|\hat{x}|\le C \; \text{(from the previous corollary)}\\
\delta u(\hat{x}) - \Delta u(\hat{x}) - f(\hat{x})=0
\end{cases}.
$$
Hence, for $x\in\R^n$
\begin{eqnarray}
\delta {\mathcal C}(x) & \le & \delta {\mathcal C}(\hat{x}) \nonumber \\
& = & \epsilon (\delta u(\hat{x} +z) + \delta u(\hat{x} - z)) - 2u(\hat{x}) \nonumber \\
&\le & \epsilon \Delta u(\hat{x}+z) - \Delta u(\hat{x})+ \epsilon \Delta u(\hat{x}-z) \nonumber \\
&& +  \epsilon (f(\hat{x} +z) + f(\hat{x} - z)) - 2f(\hat{x}) \nonumber \\
&\le &f(\hat{x} +z)  - 2f(\hat{x})+ f(\hat{x} - z)\nonumber \\
&\le & \max_{-1\le \xi \le 1}D^2f(\hat{x}+\xi z)z\cdot z\nonumber \\
&\le & \max_{|y|\le C_1}|D^2f(y)||z|^2\nonumber.
\end{eqnarray}
As the last expression is independent of $\epsilon$, we send $\epsilon\rightarrow 1^-$ and arrive at the inequality
$$
\frac{u(x+z)-2u(x) + u(x-z)}{|z|^2}\le \frac{1}{\delta}\max_{|y|\le C_1}|D^2f(y)|, \quad 0<|z|<C_1-C.
$$
The claim now follows as $D^2u$ exists a.e. in $\R^n$.

\par 2. Similar to previous proofs, we will ``triple the variables."  Again we fix $0<\epsilon<1$ and 
$0<|z|<C_1-C$. Set
$$
\begin{cases}
w(x_1, x_2, x_3):=\epsilon (u(x_1 +z) + u(x_2 - z) )- 2u(x_3)\\
\varphi_\eta(x_1, x_2, x_3):=\frac{1}{2\eta}\left\{|x_1-x_3|^2 + |x_2-x_3|^2  \right\}
\end{cases},
$$
for $x_1,x_2,x_3\in\R^n$ and $\eta>0$. Notice that

\begin{eqnarray}
(w-\varphi_\eta)(x_1, x_2, x_3)&=& \epsilon (u(x_1+z) - u(x_3-z) + u(x_2+z)-u(x_3-z) ) \nonumber\\
&& + {\mathcal C}(x_3) - \frac{1}{2\eta}\left\{|x_1-x_3|^2 + |x_2-x_3|^2\right\} \nonumber \\
&\le &\left( |x_1-x_3| - \frac{1}{2\eta}|x_1-x_3|^3\right) + \left( |x_2-x_3| - \frac{1}{2\eta}|x_2 - x_3|^3\right) +{\mathcal C}(x_3), \nonumber
\end{eqnarray}
which immediately implies 
$$
\lim_{|(x_1, x_2, x_3)|\rightarrow \infty}(w-\varphi_\eta)(x_1, x_2, x_3)=-\infty.
$$
In particular, there is $(x^\eta_1, x^\eta_2, x^\eta_3)$ globally maximizing $w-\varphi_\eta$.  Now we can invoke the Theorem of Sums and 
argue very similarly to how we did in the proof of the convexity of solutions of \eqref{deltaPDE}. We leave the details to the reader. 

\end{proof}

\begin{cor} We have the following:\\
(i) $u_\delta \in C^{1,1}(\R^n)$. \\
(ii)
$$
\Omega_\delta:=\left\{x\in\R^n: |Du_\delta(x)|<1 \right\}
$$
is open and bounded independently of all $0<\delta\le 1.$\\
(iii) $u_\delta\in C^{\I}(\Omega_\delta)$. \\
(iv) There is $L$ (independent of $0<\delta \le 1$) such that 
$$
D^2u_\delta(x)\le L,\quad x\in \Omega_\delta. 
$$
\end{cor}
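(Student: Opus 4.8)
The plan is to read off (i)--(iv) from the estimates already in place, putting essentially all of the genuinely new work into part (iv). For (i), I would combine Lemma~\ref{UDeltaConvex} with the pure second-difference bound produced in the proof of the lemma immediately preceding this corollary, namely that $u_\delta(x+z)-2u_\delta(x)+u_\delta(x-z)\le \frac{|z|^2}{\delta}\max_{|y|\le C_1}|D^2 f(y)|$ for every $x\in\R^n$ and every $0<|z|<C_1-C$. Writing $M_\delta:=\frac1\delta\max_{|y|\le C_1}|D^2 f(y)|$, this says that $\frac{M_\delta}{2}|x|^2-u_\delta$ is midpoint-convex on small scales and continuous, hence convex on $\R^n$, so $u_\delta$ is semiconcave with linear modulus (constant $M_\delta$); together with the convexity of $u_\delta$ this forces $u_\delta\in C^{1,1}(\R^n)$ with $\|D^2u_\delta\|_{L^\infty(\R^n)}\le M_\delta$, by the standard characterization of $C^{1,1}$ functions as those that are simultaneously semiconvex and semiconcave with linear moduli (cf. \cite{EG}). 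For (ii), openness of $\Omega_\delta$ is immediate from the continuity of $Du_\delta$ coming from (i); for the $\delta$-uniform boundedness I would use that $u_\delta$ is convex and $C^1$, so $J^{1,-}u_\delta(x)=\{Du_\delta(x)\}$ at every point, and then Corollary~\ref{technicalOmega} gives $|Du_\delta(x)|=1$ whenever $|x|\ge C$, i.e. $\Omega_\delta\subseteq B_C(0)$ with $C$ independent of $\delta$.

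For (iii) the point is that inside $\Omega_\delta$ the gradient constraint is strictly inactive, so the fully nonlinear equation degenerates to a linear one. Fix $x_0\in\Omega_\delta$; by continuity of $Du_\delta$ there is a ball $B\subseteq\Omega_\delta$ around $x_0$ on which $|Du_\delta|<1$. If $\varphi\in C^2$ touches $u_\delta$ from above (resp. below) at $y\in B$, then $D\varphi(y)=Du_\delta(y)$ with $|D\varphi(y)|<1$, so the term $|D\varphi(y)|-1$ is strictly negative and the $\max$ in \eqref{deltaPDE} forces $\delta u_\delta(y)-\Delta\varphi(y)-f(y)\le0$ (resp. $\ge0$). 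Thus $u_\delta$ solves $-\Delta u_\delta=f-\delta u_\delta$ in $B$ in the viscosity sense, and since $u_\delta\in C^{1,1}\subseteq W^{2,p}_{\mathrm{loc}}$ by (i), the equation holds a.e.; as $f\in C^\infty$ and $u_\delta\in C^{0,1}_{\mathrm{loc}}$ we have $f-\delta u_\delta\in C^{0,\alpha}_{\mathrm{loc}}$, so interior Schauder estimates together with the usual bootstrap ($f-\delta u_\delta\in C^{k,\alpha}_{\mathrm{loc}}\Rightarrow u_\delta\in C^{k+2,\alpha}_{\mathrm{loc}}$) give $u_\delta\in C^\infty(B)$, and hence $u_\delta\in C^\infty(\Omega_\delta)$.

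For (iv), by (iii) $u_\delta$ is a classical solution of $\Delta u_\delta=\delta u_\delta-f$ on $\Omega_\delta$. Using $f\ge0$ and the bound $0\le u_\delta(x)\le\frac{K}{\delta}+|x|$ recorded in the Basic estimates subsection (with $K$ the constant of \eqref{viscsuper}), together with $\Omega_\delta\subseteq B_C$ from (ii) and $\delta\le1$, I obtain
$$\Delta u_\delta(x)\le \delta u_\delta(x)\le K+\delta|x|\le K+C,\qquad x\in\Omega_\delta.$$
Finally, convexity gives $D^2u_\delta(x)\ge0$, so every eigenvalue of $D^2u_\delta(x)$ is at most its trace: for each unit vector $e$, $D^2u_\delta(x)e\cdot e\le \tr D^2u_\delta(x)=\Delta u_\delta(x)\le K+C$. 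Hence $D^2u_\delta\le L$ on $\Omega_\delta$ with $L:=K+C$ independent of $0<\delta\le1$.

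The step I expect to be the main obstacle is (iv): all of the a priori second-derivative control available (Proposition~\ref{Basic2ndDer}) degenerates like $1/\delta$, and the resolution is the observation used above---that on $\Omega_\delta$ the nonlinear equation collapses to the Poisson equation $\Delta u_\delta=\delta u_\delta-f$, whose right-hand side is bounded uniformly in $\delta$ on the uniformly bounded set $\Omega_\delta$, after which convexity converts this bound on $\Delta u_\delta$ into a full bound on $D^2u_\delta$. Parts (i)--(iii) are comparatively routine consequences of the convexity, the second-difference estimate, Corollary~\ref{technicalOmega}, and interior elliptic regularity.
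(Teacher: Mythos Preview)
Your proposal is correct and follows essentially the same route as the paper: (i)--(iii) are read off from Proposition~\ref{Basic2ndDer}, Corollary~\ref{technicalOmega}, and interior elliptic regularity for the linear equation $\delta u_\delta-\Delta u_\delta=f$ on $\Omega_\delta$, while for (iv) the paper makes exactly your observation that on $\Omega_\delta$ one has $\Delta u_\delta=\delta u_\delta-f\le K+\delta|x|\le K+C$ and then uses convexity to dominate $D^2u_\delta\xi\cdot\xi$ by the trace. Your write-up is slightly more detailed (the semiconvex/semiconcave characterization of $C^{1,1}$ in (i), the explicit Schauder bootstrap in (iii)), but the ideas coincide.
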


\begin{proof}  As usual we write $u$ for $u_\delta $. $(i)$  is immediate from Proposition \ref{Basic2ndDer}. $(ii)$ follows from Corollary \ref{technicalOmega} and 
$(i)$, since $x\mapsto |Du(x)|$ is continuous on $\R^n$. $(iii)$ follows from basic elliptic regularity theory since
$u$ satisfies the linear elliptic PDE 
$$
\delta u(x) - \Delta u(x)=f(x), \quad x\in \Omega_\delta
$$
and $f\in C^\I(\R^n)$ (see Theorem 6.17 \cite{GT}).  As for $(iv)$, we have by convexity that if $x\in \Omega_\delta$ and $|\xi|=1$

\begin{eqnarray}
D^2u(x)\xi\cdot \xi &\le& \Delta u(x)\nonumber \\
&=& \delta u(x)-h(x) \nonumber \\
&\le & K + \delta |x|\nonumber \\
&\le & K + C=:L.\nonumber
\end{eqnarray}
\end{proof}


We conclude this subsection with a statement that $u_\delta$ is a Lipschitz extension of its values in $\overline{\Omega_\delta}$. 

\begin{prop}
\begin{equation}\label{LipExtUdelta}
u_\delta(x)=\min_{y\in \overline{\Omega_\delta}}\left\{u_\delta(y) + |x-y|\right\}, \quad x\in \R^n.
\end{equation}
\end{prop}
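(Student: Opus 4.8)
The plan is to prove the two inequalities implied by \eqref{LipExtUdelta} separately. Write $v(x) := \min_{y \in \overline{\Omega_\delta}}\{u_\delta(y) + |x-y|\}$; note $v$ is well-defined since $\overline{\Omega_\delta}$ is compact (by the previous corollary), $v$ is Lipschitz with constant $1$, and $v = u_\delta$ on $\overline{\Omega_\delta}$ because $u_\delta$ itself is $1$-Lipschitz (so $u_\delta(y) + |x-y| \ge u_\delta(x)$ for $y,x \in \overline{\Omega_\delta}$, with equality at $y=x$). The inequality $u_\delta \le v$ on all of $\R^n$ is again immediate from the fact that $u_\delta$ is $1$-Lipschitz: for any $x$ and any $y \in \overline{\Omega_\delta}$, $u_\delta(x) \le u_\delta(y) + |x-y|$, and taking the minimum over $y$ gives $u_\delta(x) \le v(x)$.

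The substantive direction is $v \le u_\delta$, i.e. that $u_\delta$ is the \emph{smallest} $1$-Lipschitz extension of its boundary values — equivalently, that $u_\delta$ cannot be strictly larger than the minimal extension anywhere. I would argue this by a comparison/uniqueness argument: show that $v$ is itself a viscosity subsolution of \eqref{deltaPDE}, that it satisfies the growth condition \eqref{ugrowth}, and then invoke the comparison principle (Proposition \ref{comparisonU}) against the supersolution $u_\delta$ to get $v \le u_\delta$. For the growth condition: from $v = u_\delta$ on $\overline{\Omega_\delta}$ and the estimate $(|x|-K)^+ \le u_\delta(x) \le K/\delta + |x|$, one checks $v(x) = \min_y\{u_\delta(y) + |x-y|\} \ge |x| - C'$ for large $|x|$ (using that $|x-y| \ge |x| - C$ when $y \in \overline{\Omega_\delta} \subset B_C$) and $v(x) \le |x| + C''$; combined with $\mathrm{Lip}(v) = 1$ this forces $v(x)/|x| \to 1$. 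For the subsolution property: $|Dv| \le 1$ in the viscosity sense is automatic from $\mathrm{Lip}(v)=1$, so it remains to check $\delta v - \Delta v - f \le 0$ in the viscosity sense, i.e. whenever $\phi \in C^2$ touches $v$ from above at $x_0$ with $|D\phi(x_0)| < 1$, then $\delta v(x_0) - \Delta\phi(x_0) - f(x_0) \le 0$. The key observation is that if $|D\phi(x_0)| < 1$ then the minimum defining $v(x_0)$ is attained at an interior point or, more usefully, that near such $x_0$ the function $v$ agrees locally with $u_\delta$: indeed where $v < $ (the distance-type envelope is strict) the point $x_0$ must lie "above" $\overline{\Omega_\delta}$ in a way that the optimal $y \in \overline{\Omega_\delta}$ satisfies $|Dv(x_0)| = 1$ at differentiability points; so the only place $|D\phi(x_0)| < 1$ can happen for a touching test function is where $v$ coincides with $u_\delta$ in a neighborhood, and there the subsolution inequality is inherited directly from $u_\delta$.

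Let me restate that last step more carefully, since it is the crux. Fix $x_0$ with a $C^2$ test function $\phi$ touching $v$ from above, and suppose $|D\phi(x_0)| < 1$. If $x_0 \in \Omega_\delta$, then $v = u_\delta$ on $\Omega_\delta$ (since $v=u_\delta$ on $\overline{\Omega_\delta}$ and both are $1$-Lipschitz, a short argument gives equality on the open set too, or one notes $v \le u_\delta$ everywhere and $v \ge u_\delta$ on $\overline{\Omega_\delta}$ forces agreement on $\Omega_\delta$ by taking $y=x_0$), so $\phi$ touches $u_\delta$ from above at $x_0$ and we get $\delta u_\delta(x_0) - \Delta\phi(x_0) - f(x_0) \le 0 = \delta v(x_0) - \Delta\phi(x_0) - f(x_0)$. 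If $x_0 \notin \Omega_\delta$, I claim the strict inequality $|D\phi(x_0)| < 1$ is impossible: pick any optimal $y_0 \in \overline{\Omega_\delta}$ with $v(x_0) = u_\delta(y_0) + |x_0 - y_0|$; since $x_0 \notin \overline{\Omega_\delta}$ we have $x_0 \ne y_0$, and for $t \in [0,1]$ the point $x_t = x_0 + t(y_0 - x_0)/|y_0-x_0|$ satisfies $v(x_0) \le v(x_t) + t$ (triangle-inequality/segment estimate) with equality, so $v$ decreases at unit rate along the direction $e := (y_0 - x_0)/|y_0 - x_0|$; then $\phi(x_0) = v(x_0) = v(x_0 - se) + s + o(1)$... — more directly, $v(x_0 - s e) \ge v(x_0) + s$ for small $s>0$ gives, via $\phi \ge v$ and $\phi(x_0) = v(x_0)$, that $\phi(x_0 - se) \ge \phi(x_0) + s$, hence $-D\phi(x_0)\cdot e \ge 1$, forcing $|D\phi(x_0)| \ge 1$, a contradiction. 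Hence whenever $|D\phi(x_0)| < 1$ we are in the first case and the inequality holds; thus $v$ is a viscosity subsolution of \eqref{deltaPDE}. Applying Proposition \ref{comparisonU} with $v$ as subsolution and $u_\delta$ as supersolution yields $v \le u_\delta$, completing the proof. The main obstacle is precisely this dichotomy argument showing the test-function gradient is forced to have norm $1$ off $\Omega_\delta$ — everything else is bookkeeping with the $1$-Lipschitz bound and the growth estimates.
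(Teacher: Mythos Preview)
Your overall strategy---show $v$ is a viscosity subsolution of \eqref{deltaPDE} and compare with the supersolution $u_\delta$---is natural, but the execution breaks down at exactly the step you flag as the crux. Two related problems arise. First, the subsolution condition for $\max\{\delta v-\Delta v-f,\ |Dv|-1\}=0$ requires \emph{both} inequalities at every touching point: if $\phi$ touches $v$ from above at $x_0$ you must verify $\delta v(x_0)-\Delta\phi(x_0)-f(x_0)\le 0$ regardless of whether $|D\phi(x_0)|<1$ or $=1$. Restricting attention to the case $|D\phi(x_0)|<1$ is the test for a \emph{super}solution of a max equation, not a subsolution. Second, and more concretely, your claimed inequality $v(x_0-se)\ge v(x_0)+s$ goes the wrong way: using the minimizer $y_0$ as a competitor gives only $v(x_0-se)\le u_\delta(y_0)+|x_0-se-y_0|=v(x_0)+s$, and there is no mechanism forcing equality. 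Consequently you cannot conclude $|D\phi(x_0)|\ge 1$ for upper test functions at $x_0\notin\overline{\Omega_\delta}$; in fact for inf--distance envelopes this can genuinely fail (think of a point on the ``ridge'' where two minimizers compete---the first-order superjet there contains vectors of norm strictly less than $1$).

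The paper's route avoids this by working on the exterior domain with the first-order eikonal problem $|Dw|=1$ in $\overline{\Omega_\delta}^{\,c}$, $w=u_\delta$ on $\partial\Omega_\delta$, and invoking uniqueness for that boundary-value problem. The point is that your directional argument \emph{does} work for test functions touching from \emph{below}: if $\phi\le v$ with equality at $x_0\notin\overline{\Omega_\delta}$, then from $v(x_0+te)\le v(x_0)-t$ (moving toward the minimizer) one gets $\phi(x_0+te)\le\phi(x_0)-t$, hence $D\phi(x_0)\cdot e\le -1$ and $|D\phi(x_0)|\ge 1$. This shows the RHS is a viscosity \emph{supersolution} of the eikonal equation; the Lipschitz bound gives the subsolution half; and the same holds for $u_\delta$ by definition of $\Omega_\delta$. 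Comparison for the eikonal BVP (not for \eqref{deltaPDE}) then yields $u_\delta=v$ on the exterior. So the fix is to redirect your touching argument to the eikonal equation with lower test functions, rather than trying to force $v$ into the second-order comparison.
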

\begin{proof}
It is simple to check that, since Lip$[u_\delta]\le 1$, the formula above holds for $x\in\overline{\Omega_\delta}$. We now proceed to show that the formula above also holds
in the complement of $\overline{\Omega_\delta}$. 

\par As easy argument using the convexity of $u_\delta$ establishes that the minimum in \eqref{LipExtUdelta} is achieved on $\partial\overline{ \Omega_\delta}=\partial\Omega_\delta$
for $x\notin \Omega_\delta$. So we are left to show
\begin{equation}\label{LipExtUdelta2}
u_\delta(x)=\min_{y\in\partial\Omega_\delta}\left\{u_\delta(y) + |x-y|\right\}, \quad x\notin \Omega_\delta.
\end{equation}
To this end, we first notice that $u_\delta$ satisfies the eikonal equation

\begin{equation}\label{Eikonal}
\begin{cases}
|Dv(x)|=1, \; & x\in \overline{\Omega_\delta}^c\\
\hspace{.25in}v(x)=u_\delta(x), \; & x\in \partial{\Omega_\delta}
\end{cases}
\end{equation}
and we claim this PDE has a unique solution given by the right hand side of \eqref{LipExtUdelta2}.
It is not hard to see that the right hand side (RHS) above is a solution of \eqref{Eikonal}. RHS clearly defines a function with
Lipschitz constant at most 1 and hence is a subsolution of \eqref{Eikonal}. The RHS also dominates {\it every} subsolution of the eikonal equation that is
equal to $u_\delta$ on $\partial \Omega_\delta$ and therefore is a supersolution of \eqref{Eikonal} by Lemma 4.4 in \cite{CIL}.
The proof of uniqueness is a straightforward adaptation of the proof of comparison of sub- and supersolutions of \eqref{deltaPDE} (see also Theorem 5.9 of \cite{BC}). 
\end{proof}

\begin{cor}\label{D2uBoundedxLarge}
There is a universal constant $C>0$, such that the estimate 
$$
D^2u_\delta(x)\le \frac{1}{|x|-C}, \; \text{a.e.}\; |x|>C
$$
holds for all $0<\delta \le 1$.
\end{cor}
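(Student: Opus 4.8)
The plan is to exploit the representation formula \eqref{LipExtUdelta}: outside $\overline{\Omega_\delta}$, the function $u_\delta$ is the infimal convolution of its trace on $\partial\Omega_\delta$ with the Euclidean norm. Since $\Omega_\delta$ is contained in a fixed ball $\{|y|<C\}$ for a universal $C$ (by Corollary \ref{technicalOmega}, or part $(ii)$ of the corollary following it), for $|x|>C$ the minimum defining $u_\delta(x)$ is attained at some $y^*=y^*_x\in\partial\Omega_\delta$, and the smooth ``cone'' $\phi(z):=u_\delta(y^*)+|z-y^*|$ touches $u_\delta$ from above at $x$. The Hessian of $z\mapsto|z-y^*|$ at $x$ equals $\frac{1}{|x-y^*|}$ times the orthogonal projection onto the hyperplane perpendicular to $x-y^*$, hence is $\le\frac{1}{|x-y^*|}I$; and $|y^*|\le C$ forces $|x-y^*|\ge|x|-C$, so $D^2\phi(x)\le\frac{1}{|x|-C}$ in the sense of symmetric matrices. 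At any point where $u_\delta$ is twice differentiable --- almost every point, by the $C^{1,1}$ regularity of $u_\delta$ established in Proposition \ref{Basic2ndDer} together with Aleksandrov's theorem --- being touched from above by $\phi$ forces $D^2u_\delta(x)\le D^2\phi(x)$, which is the desired bound.

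Concretely, I would proceed as follows. First, fix the universal constant $C$ with $\Omega_\delta\subseteq\{|y|<C\}$ for all $0<\delta\le 1$, and note $\Omega_\delta\neq\emptyset$: since $u_\delta$ is convex, nonnegative and $1$-Lipschitz it attains a minimum, at which $Du_\delta=0$. Hence $\partial\Omega_\delta$ is a nonempty compact subset of $\{|y|\le C\}$, so for $|x|>C$ the minimum in \eqref{LipExtUdelta2} is genuinely attained at some $y^*\in\partial\Omega_\delta$. Second, for such $x$ observe that $\phi(z):=u_\delta(y^*)+|z-y^*|$ is smooth near $x$ (as $|y^*|\le C<|x|$ gives $y^*\neq x$), that $\phi(x)=u_\delta(x)$, and that $\phi\ge u_\delta$ on all of $\R^n$ by \eqref{LipExtUdelta}; thus $u_\delta-\phi$ has a global maximum at $x$. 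Third, compute $D^2\phi(x)$ and combine it with $|x-y^*|\ge|x|-|y^*|\ge|x|-C$ to get $D^2\phi(x)\le\frac{1}{|x|-C}I$. Finally, at a point $x$ of twice differentiability of $u_\delta$ with $|x|>C$, compare the second-order Taylor polynomials of $u_\delta$ and $\phi$ at $x$ to conclude $D^2u_\delta(x)\le D^2\phi(x)\le\frac{1}{|x|-C}$, and invoke the a.e. twice differentiability of $u_\delta$; the estimate is uniform in $\delta$ because the whole argument used only the $\delta$-independent inclusion $\Omega_\delta\subseteq\{|y|<C\}$.

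I do not anticipate a serious obstacle. The only points needing a little care are (a) the attainment of the minimum in \eqref{LipExtUdelta2}, which is why one first records that $\Omega_\delta$ is nonempty and $\partial\Omega_\delta$ is compact; and (b) the passage from ``$\phi$ smooth and touching $u_\delta$ from above at a point of twice differentiability'' to the pointwise matrix inequality $D^2u_\delta(x)\le D^2\phi(x)$ --- routine via Taylor expansion, but worth stating explicitly since $u_\delta$ is not assumed $C^2$. It should also be emphasized that $C$ is the same for all $\delta\in(0,1]$, which is immediate from the $\delta$-independence of the bound on $\Omega_\delta$.
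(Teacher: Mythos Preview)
Your proposal is correct and follows essentially the same approach as the paper: both use the Lipschitz extension formula \eqref{LipExtUdelta} to touch $u_\delta$ from above at $x$ by the cone $\phi(z)=u_\delta(y^*)+|z-y^*|$ with $|y^*|\le C$, and then bound $D^2u_\delta(x)$ by $D^2\phi(x)\le\frac{1}{|x-y^*|}I\le\frac{1}{|x|-C}I$. The only cosmetic difference is that the paper phrases the comparison via second-order difference quotients $\frac{u_\delta(x+z)-2u_\delta(x)+u_\delta(x-z)}{|z|^2}$ before letting $|z|\to0$, whereas you pass directly to Taylor expansions at points of twice differentiability; both are equivalent here.
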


\begin{proof}
Recall that $\Omega_\delta$ is bounded independently of $0<\delta\le 1$; let $C$ be chosen so large that if $x\in \Omega_\delta$, then $|x|\le C$.
Also recall that $x\mapsto |x|$ is smooth on $\R^n\setminus\{0\}$ and that
$$
D^2|x|=\frac{1}{|x|}\left(I_n - \frac{x\otimes x}{|x|^2}\right)\le \frac{1}{|x|}I_n, \quad x\neq 0.
$$
Let $x\in \R^n$ with $|x|>C$. From \eqref{LipExtUdelta}, there is $y\in \partial \Omega_\delta$ such that $u_\delta(x)=u_\delta(y)+|x-y|$; moreover $|y|\le C.$ Also 
from \eqref{LipExtUdelta} and the above computation, we have that as $|z|\rightarrow 0$
\begin{eqnarray}
\frac{u_\delta(x+z) -2u_\delta(x)+u_\delta(x-z)}{|z|^2}&\le&\frac{|x+z-y| -2|x-y|+|x-z-y|}{|z|^2} \nonumber \\
&\le &\frac{1}{|x-y|}+o(1)\nonumber \\
&\le &\frac{1}{|x|-|y|}+o(1)\nonumber \\
&\le &\frac{1}{|x|-C}+o(1)\nonumber. 
\end{eqnarray}
The corollary now follows as $u_\delta$ is twice differentiable almost everywhere in $\R^n.$
\end{proof}

\subsection{$C^2$ regularity of solutions for rotational $f$}
We take a brief break from our analysis of $u_\delta$ for $\delta$ small to prove Theorem \ref{RotReg}, which asserts that when $f$ is rotational $u_\delta\in C^2(\R^n)$. We assume that
$$
f(x):=f_0(|x|), \quad x\in\R^n.
$$
where $f_0:[0,\infty)\rightarrow \R$ is convex, non-decreasing and superlinear.  Using the uniqueness of solutions of \eqref{deltaPDE}, it is straightforward to
show that $u_\delta$ will also be radially symmetric. That is,
$$
u_\delta(x)=\phi(|x|), \;x \in\R^n
$$
for some $\phi$ satisfying 

$$
\begin{cases}
\phi\in C^{1,1}(0,\infty)\\
\phi'\ge 0\\
\phi''\ge 0 \\
\lim_{r\rightarrow \infty}\frac{\phi(r)}{r}=1
\end{cases}
$$
and 
\begin{equation}\label{phiEq}
\max\left\{\delta \phi - \frac{n-1}{r}\phi' - \phi'' - f_0(r), \phi' -1\right\}=0, \quad r>0
\end{equation}
in the sense of viscosity solutions.  We claim that $\phi$ and therefore $u_\delta$ is necessarily $C^2$ (and not just $C^{1,1}$).

\par Since $\phi$ is non-decreasing, there is an $r_0>0$ such that
$$
\Omega_\delta=B_{r_0}(0).
$$
Therefore, it suffices to show that $\phi''(r)$ exists at $r=r_0$ and equals 0.  Of course, $\phi'(r)=1$, for $r\ge r_0$, so 
$$
\phi''(r_0+)=0
$$
and so we focus on establishing that 
$$
\rho:=\phi''(r_0-)=0.
$$

\par By the convexity of $\phi$, $\rho$ exists and is non-negative.  By \eqref{phiEq}, 

$$
\delta \phi - \frac{n-1}{r}\phi' - \phi'' - f_0(r)=0, \quad 0< r<r_0
$$
and so letting $r\rightarrow r_0^-$
\begin{equation}\label{phipp}
\delta \phi(r_0) - \frac{n-1}{r_0}\phi'(r_0) - \rho - h_0(r_0)=0.
\end{equation}
However, we always have
$$
\delta \phi - \frac{n-1}{r}\phi' - \phi'' - f_0(r)\le 0, \quad r>0
$$
and so letting $r\rightarrow r_0^+$ gives

\begin{equation}\label{phipp2}
\delta \phi(r_0) - \frac{n-1}{r}\phi'(r_0)  - f_0(r_0)\le 0.
\end{equation}
Comparing \eqref{phipp} and \eqref{phipp2}, we have 
$$
\rho=\delta \phi(r_0) - \frac{n-1}{r_0}\phi'(r_0) - f_0(r_0)\le 0.
$$
Thus, $\rho=0$ as desired which proves Theorem \ref{RotReg}.

\section{A uniform second derivative estimate}
According to Corollary \ref{D2uBoundedxLarge}, $D^2u_\delta$ is bounded from above  for all $x$ large enough independently of all $\delta$ positive and small. However, the upper bound we have in the whole space 
$$
\frac{1}{\delta}\max_{|y|\le C_1}|D^2f(y)|,
$$
blows up as $\delta\rightarrow 0^+.$  Our aim in this section is to obtain an estimate on the second derivative of $u_\delta$ that is {\it uniform} in all small $\delta>0$.  In fact, we prove
 
 \begin{prop}\label{Uniform2ndDer}
 For each ball $B\subset \R^n$, there is a constant $C=C(B)$ such that 
$$
 |Du_\delta(x)-Du_\delta(y)|\le C|x-y|, \quad x,y\in B
$$
for each $0<\delta\le1$. 
 \end{prop}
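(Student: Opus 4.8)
The plan is to reduce the estimate to a uniform one‑sided (semiconcavity) bound on second differences of $u_\delta$, and then to establish that bound by combining the interior Hessian estimate on $\Omega_\delta$ with the distance‑type representation \eqref{LipExtUdelta} of $u_\delta$ outside $\Omega_\delta$. Since $u_\delta$ is $1$‑Lipschitz, only $x,y\in B$ with $|x-y|$ small need attention, so it suffices to produce, for every ball $B'$, a constant $C=C(B')$ with
$$u_\delta(x+h)+u_\delta(x-h)-2u_\delta(x)\le C|h|^2\qquad\text{whenever } x,x\pm h\in B',\ 0<\delta\le 1.$$
Because $u_\delta$ is convex (Lemma \ref{UDeltaConvex}) the left side is also nonnegative, so Aleksandrov's theorem gives $0\le D^2u_\delta\le C(B')I$ a.e.\ on $B'$, and then, $u_\delta$ being $C^{1,1}$ and balls convex, integrating along segments yields the asserted Lipschitz bound for $Du_\delta$ on $B$. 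Equivalently, it suffices to prove $D^2u_\delta\le C(B)$ a.e.\ on $B$. This already holds with $C=L$ at a.e.\ point of $B\cap\Omega_\delta$ by the uniform interior bound $D^2u_\delta\le L$ on $\Omega_\delta$, and at a.e.\ point with $|x|$ large by Corollary \ref{D2uBoundedxLarge}; the one remaining region is $B\setminus\overline{\Omega_\delta}$ near $\partial\Omega_\delta$, where for the minimizing point in \eqref{LipExtUdelta} the distance $|x-y|$ can be small and the bound of Corollary \ref{D2uBoundedxLarge} degenerates.

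Fix such an $x$, a point of twice‑differentiability of $u_\delta$ in $B\setminus\overline{\Omega_\delta}$, and let $y^*\in\partial\Omega_\delta$ attain the minimum in \eqref{LipExtUdelta}. Writing $\nu:=(x-y^*)/|x-y^*|$, one checks from \eqref{LipExtUdelta} and $1$‑Lipschitz continuity that $u_\delta(y^*+t\nu)=u_\delta(y^*)+t$ for $0\le t\le|x-y^*|$, so $u_\delta$ is affine along that segment and $Du_\delta(y^*)=\nu$. The key idea is to compare $x$ not with $y^*$ but with the \emph{interior} competitor $y':=y^*-sDu_\delta(y^*)=y^*-s\nu$, with $s$ a suitable fixed multiple of $|h|$, chosen so that $y'\in\Omega_\delta$ and the whole segment $[y'-h,y'+h]$ lies in $\Omega_\delta$. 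Testing \eqref{LipExtUdelta} at $x\pm h$ with the competitor $y'\pm h$ and using $(x\pm h)-(y'\pm h)=x-y'$,
$$u_\delta(x+h)+u_\delta(x-h)-2u_\delta(x)\le\big[u_\delta(y'+h)+u_\delta(y'-h)-2u_\delta(y')\big]+2\big[u_\delta(y')+|x-y'|-u_\delta(x)\big].$$
The first bracket is $\le L|h|^2$ by the interior estimate. For the second, collinearity gives $|x-y'|=|x-y^*|+s$, so it equals $2\big(u_\delta(y^*-s\nu)-u_\delta(y^*)+s\big)$, and a second‑order Taylor expansion of $u_\delta$ at $y^*$ along $[y^*-s\nu,y^*]\subset\overline{\Omega_\delta}$, using $Du_\delta(y^*)\cdot\nu=1$ and $0\le D^2u_\delta\le L$ there, bounds it by $Ls^2$. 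With $s$ comparable to $|h|$ this gives the semiconcavity inequality with $C$ independent of $\delta$; dividing by $|h|^2$ and letting $h\to0$ gives $D^2u_\delta(x)\le C$ and disposes of the remaining region.

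The step that carries the weight is the geometric one: that moving inward a distance $s$ from $y^*\in\partial\Omega_\delta$ in the direction $-Du_\delta(y^*)$ lands in $\Omega_\delta$ at distance at least a fixed fraction of $s$ from $\partial\Omega_\delta$, \emph{uniformly in $\delta$}, and that the inward segment stays in $\overline{\Omega_\delta}$. This is a uniform interior‑ball property for $\partial\Omega_\delta$, and its natural source is the one‑sided bound $D^2u_\delta\le L$ on $\Omega_\delta$ (a uniform upper bound on the curvature of the level sets of $u_\delta$ from inside) together with the convexity of $\Omega_\delta$ — the latter I would establish separately, e.g.\ by a convexity‑maximum‑principle argument in the spirit of Lemma \ref{UDeltaConvex}, or directly from the convexity of $u_\delta$. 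I expect checking these facts (regularity of $\partial\Omega_\delta$ and the uniform interior‑ball condition) to be the main obstacle; once they are in hand the rest is routine bookkeeping with \eqref{LipExtUdelta} and the bounds already proved. A variant that avoids a global discussion of $\Omega_\delta$ is to argue locally near $\partial\Omega_\delta$: combine the interior inequality $u_\delta(z)\le u_\delta(y^*)+Du_\delta(y^*)\cdot(z-y^*)+\tfrac{L}{2}|z-y^*|^2$, valid on the $\Omega_\delta$‑side, with the eikonal identity $|Du_\delta|\equiv1$ on $\overline{\Omega_\delta}^c$ to see that the outer level sets of $u_\delta$ adjacent to $\partial\Omega_\delta$ have second fundamental form $\le L$, which again yields the competitor estimate.
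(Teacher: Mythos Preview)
Your route is entirely different from the paper's. The paper does \emph{not} argue geometrically from the representation \eqref{LipExtUdelta}; instead it runs a penalty method in the style of Evans--Wiegner. One fixes $0<\delta\le 1$, replaces \eqref{deltaPDE} on a ball $B$ by the semilinear problem $\delta v-\Delta v+\beta_\epsilon(|Dv|^2-1)=f$ with $v=u_\delta$ on $\partial B$, derives a priori interior bounds on $|D^2v_\epsilon|$ that depend only on $|\delta v_\epsilon|_{L^\infty(B)}+|Dv_\epsilon|_{L^\infty(B)}^2$ (this is the crucial Corollary \ref{goodD2Vbounds}), and then sends $\epsilon\to 0$ to recover $u_\delta$. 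Because $\delta u_\delta$ and $|Du_\delta|$ are bounded uniformly in $\delta$, the resulting $W^{2,\infty}_{\rm loc}$ bound on $u_\delta$ is uniform in $\delta$ as well. No regularity of $\partial\Omega_\delta$ is needed.

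Your scheme, by contrast, stands or falls on the geometric step you yourself flag as ``the main obstacle'': that moving from $y^*\in\partial\Omega_\delta$ a distance $s$ in the direction $-\nu=-Du_\delta(y^*)$ lands at a point $y'$ with $B(y',c s)\subset\Omega_\delta$ (uniformly in $\delta$), and that the segment $[y^*-s\nu,y^*]$ lies in $\overline{\Omega_\delta}$. None of this is established, and the difficulties are real. First, $-\nu$ is not the inward normal to $\partial\Omega_\delta$: the level set $\{|Du_\delta|=1\}$ has normal direction $D^2u_\delta(y^*)\nu$, which need not be parallel to $\nu$. Second, even along $-\nu$ one has $|Du_\delta(y^*-s\nu)|^2=1-2s\,D^2u_\delta(y^*)\nu\cdot\nu+O(s^2)$, and convexity only gives $D^2u_\delta(y^*)\nu\cdot\nu\ge 0$; if it vanishes you do not enter $\Omega_\delta$ at first order, and the interior ball radius degenerates. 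Third, the convexity of $\Omega_\delta$ is asserted but not proved, and it is not an immediate consequence of the convexity of $u_\delta$. Finally, both brackets in your key inequality require the segment $[y'-h,y'+h]$ (for \emph{all} directions $h$) and the segment $[y^*-s\nu,y^*]$ to lie in $\overline{\Omega_\delta}$, since the only uniform-in-$\delta$ Hessian bound available at this stage is the interior bound $D^2u_\delta\le L$ on $\Omega_\delta$; without the interior ball property this is circular. Until these geometric facts are supplied with $\delta$-independent constants, the argument is incomplete.
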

Having established the above proposition, we would immediately have from Corollary \ref{D2uBoundedxLarge} the following uniform second derivative estimate.  This bound, together with the convexity of
$u_\delta$, implies part $(ii)$ of Theorem \ref{deltathm}.

\begin{cor}
There is a universal constant $L$ such that 
\begin{equation}\label{FinalBound}
0\le D^2u_\delta(x)\le L, \; \text{a.e.} \; x\in\R^n
\end{equation}
for all $0<\delta\le 1$. 
\end{cor}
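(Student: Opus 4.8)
The plan is to patch together the two second derivative bounds already in hand: the interior-type estimate of Proposition \ref{Uniform2ndDer}, which is uniform in $\delta$ but depends on the ball, and the exterior estimate of Corollary \ref{D2uBoundedxLarge}, which is good precisely for $|x|$ large. The lower bound $0\le D^2u_\delta$ needs no new work: it is the a.e.\ Hessian form of the convexity of $u_\delta$ established in Lemma \ref{UDeltaConvex}, recalling that $u_\delta$ is twice differentiable at a.e.\ point by Aleksandrov's theorem.

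For the upper bound, let $C>0$ be the universal constant furnished by Corollary \ref{D2uBoundedxLarge}, and fix once and for all the ball $B:=B_{C+1}(0)$. First I would apply Proposition \ref{Uniform2ndDer} to this particular $B$ to obtain a constant $C(B)$, independent of $0<\delta\le 1$, with $|Du_\delta(x)-Du_\delta(y)|\le C(B)|x-y|$ for all $x,y\in B$. Since $u_\delta$ is twice differentiable at a.e.\ point and $Du_\delta$ is $C(B)$-Lipschitz on $B$, at every such point of twice-differentiability in $B$ the operator norm of $D^2u_\delta(x)$ is at most $C(B)$; combined with convexity this gives $0\le D^2u_\delta(x)\le C(B)I_n$ for a.e.\ $x\in B$, uniformly in $\delta$.

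Next I would handle the complement. For $x\notin B$ we have $|x|\ge C+1$, hence $|x|-C\ge 1$, so Corollary \ref{D2uBoundedxLarge} yields $D^2u_\delta(x)\le \frac{1}{|x|-C}\le 1$ for a.e.\ such $x$, again for all $0<\delta\le 1$. Setting $L:=\max\{C(B),1\}$, which depends only on $n$ and $f$ through the universal constants above, we conclude $0\le D^2u_\delta(x)\le L$ for a.e.\ $x\in\R^n$ and every $0<\delta\le 1$, which is \eqref{FinalBound}.

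There is no genuine obstacle here: all of the substantive effort lives in Proposition \ref{Uniform2ndDer}. The only points that warrant a line of care are (i) that a uniform Lipschitz bound on $Du_\delta$ translates into a pointwise a.e.\ bound on the Hessian, which is standard since the exceptional null set where $u_\delta$ fails to be twice differentiable may be discarded; and (ii) that the radius of $B$ is taken large enough (here $C+1$) that the exterior bound $\frac{1}{|x|-C}$ is finite, indeed $\le 1$, on $B^c$, so that the two regions cover $\R^n$ with no residual blow-up as $\delta\rightarrow 0^+$.
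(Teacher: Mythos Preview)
Your proposal is correct and follows exactly the approach the paper intends: it states just before this corollary that the bound follows immediately from Proposition~\ref{Uniform2ndDer} together with Corollary~\ref{D2uBoundedxLarge}, with convexity of $u_\delta$ supplying the lower bound. Your choice of $B=B_{C+1}(0)$ to patch the two estimates and the constant $L=\max\{C(B),1\}$ make the one-line justification in the paper fully explicit.
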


\subsection{Penalty method}

Towards proving Lemma \ref{Uniform2ndDer}, we fix $0< \delta \le 1$ and for $\epsilon$ positive and small study the solutions of the PDE

\begin{equation}\label{betaEq}
\begin{cases}
\delta v - \Delta v + \beta_\epsilon\left(|Dv|^2-1\right)=f, \quad x\in B\\
\hspace{1.68in} v=u_\delta, \quad x\in \partial B
\end{cases}.
\end{equation}
Here $(\beta_\epsilon)_{\epsilon>0}$ is family of functions satisfying

\begin{equation}\label{betaCOND}
\begin{cases}
\beta_\epsilon \in C^\infty(\R) \\
\beta_\epsilon = 0, \; z\le 0 \\
\beta_\epsilon >0, \; z>0\\ 
\beta_\epsilon'\ge 0\\
\beta_\epsilon''\ge 0 \\
\beta_\epsilon(z)=\frac{z-\epsilon}{\epsilon}, \quad z\ge 2\epsilon
\end{cases}.
\end{equation}
For each $\epsilon>0$, we think of $\beta_\epsilon$ as a smoothing of $z\mapsto (z/\epsilon)^+$. We consider this PDE a ``penalization" of equation \eqref{deltaPDE} as the values of $\beta_\epsilon(|Dv|^2-1)$ will
be large for small $\epsilon$, if $|Dv|^2\ge 1.$

As \eqref{betaEq} is a uniformly elliptic, semi-linear PDE it has a unique classical solution $v_{\epsilon}\in C^\I(B)\cap C^1(\overline{B})$ for each $\epsilon>0$ (by a straightforward variant of Theorem 15.10 in \cite{GT}).  Our goal is to deduce a pointwise
bound  $D^2v_\epsilon$ that is independent of all $\epsilon$ (and $\delta$) positive and small. With such an estimate we would be in a good position to pass to the limit and show $v_\epsilon\rightarrow u_\delta$ in  $C^1(B)$ and in particular that $u_\delta\in W^{2,\infty}(B)$. \footnote{We will actually do this on a slightly smaller ball than $B$. }

\par  This method was introduced by L.C. Evans to study elliptic equations with gradient constraints \cite{E}.  Although the results of \cite{E} apply to this problem,
 we derive our own estimates below because we need to understand how the estimates depend on $\delta$.  
 Our principle result related to this penalization scheme is

\begin{thm}\label{D2Vbounds}
Let $A\Subset B$. Then there is a constant $C=C(A)$ such that 
$$
|D^2v_\epsilon(x)|\le C, \; x\in A
$$
for all $0<\epsilon<1$. 
\end{thm}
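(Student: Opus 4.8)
\textbf{Proof plan for Theorem \ref{D2Vbounds}.}

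The plan is to adapt the classical Bernstein/pointwise second-derivative technique for semilinear elliptic equations to the penalized PDE \eqref{betaEq}, keeping careful track of the dependence on $\epsilon$ (and $\delta$). First I would fix a direction $\xi\in\R^n$ with $|\xi|=1$ and differentiate \eqref{betaEq} twice in the direction $\xi$ to obtain an equation for $w:=D^2_{\xi\xi}v_\epsilon$. Because $\beta_\epsilon$ is convex and nondecreasing, the second derivative of the term $\beta_\epsilon(|Dv_\epsilon|^2-1)$ produces, besides the ``good'' first-order transport terms $\beta_\epsilon'(|Dv_\epsilon|^2-1)\,D(|Dv_\epsilon|^2)\cdot(\cdots)$, a term of the form $\beta_\epsilon''(\cdots)\,(D_\xi(|Dv_\epsilon|^2))^2\ge 0$ plus $2\beta_\epsilon'(\cdots)\,|D(D_\xi v_\epsilon)|^2\ge 0$, both of which have the favorable sign. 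Thus $w$ is a subsolution of a linear uniformly elliptic operator $\delta - \Delta + b_\epsilon\cdot D$, where $b_\epsilon=2\beta_\epsilon'(|Dv_\epsilon|^2-1)Dv_\epsilon$, with right-hand side controlled by $D^2_{\xi\xi}f$, which is bounded on $\overline B$ independently of $\epsilon,\delta$ (using the gradient bound $|Dv_\epsilon|\le 1+o(1)$, or at least $|Dv_\epsilon|\le C(B)$, coming from the barrier comparison with $u_\delta$ near $\partial B$ and the structure of \eqref{betaEq}). The drift $b_\epsilon$ is potentially huge (of order $1/\epsilon$), so one cannot use any estimate whose constant depends on $\|b_\epsilon\|_\infty$; instead one must exploit that $b_\epsilon$ appears in a form amenable to a maximum-principle argument that is drift-insensitive.

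The key device is a localized maximum principle with a suitable cutoff or, better, an argument in the spirit of the one L.C.~Evans used in \cite{E}: consider the function $\Phi:= \zeta(x)\,w(x) + \gamma |Dv_\epsilon(x)|^2$ on $\overline{A'}$ for an intermediate set $A\Subset A'\Subset B$, with $\zeta$ a smooth cutoff equal to $1$ on $A$ and vanishing near $\partial A'$, and $\gamma$ a large constant to be chosen. At an interior maximum of $\Phi$ one has $D\Phi=0$ and $D^2\Phi\le 0$; substituting the equations for $w$ and for $|Dv_\epsilon|^2$ and using $D(|Dv_\epsilon|^2)=2D^2v_\epsilon\,Dv_\epsilon$ at that point, the dangerous $\beta_\epsilon'$-terms coming from $\zeta w$ are absorbed by the term $\gamma\,\beta_\epsilon'\,|D(|Dv_\epsilon|^2)|^2$ arising from $\gamma|Dv_\epsilon|^2$ once $\gamma$ is chosen large enough depending only on $\|\zeta\|_{C^2}$ and the ellipticity (not on $\epsilon$). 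What survives is a bound of the form $\zeta(x_0)w(x_0)\le C(A',\|f\|_{C^2(\overline B)},\sup_B|Dv_\epsilon|) + (\text{boundary contribution of }\Phi)$. Since $w=D^2_{\xi\xi}v_\epsilon$ can a priori be negative, to bound $|D^2v_\epsilon|$ rather than just $D^2_{\xi\xi}v_\epsilon$ from above I would run the same argument with $-\zeta w + \gamma|Dv_\epsilon|^2$, or alternatively first establish an upper bound on $D^2v_\epsilon$ in every direction and then use $\Delta v_\epsilon = \delta v_\epsilon + \beta_\epsilon(|Dv_\epsilon|^2-1) - f$ together with the (soon-to-be-proven) fact that $v_\epsilon$ is close to convex to get a lower bound; actually the cleanest route is to prove an upper bound on $D^2_{\xi\xi}v_\epsilon$ for all unit $\xi$ — this already gives $|D^2v_\epsilon|\le C$ once combined with the lower bound $D^2_{\xi\xi}v_\epsilon \ge$ (something bounded below), which follows because $\mathrm{tr}\,D^2v_\epsilon = \Delta v_\epsilon$ is bounded (the penalty term $\beta_\epsilon(|Dv_\epsilon|^2-1)$ is bounded once we know $|Dv_\epsilon|^2-1\le C\epsilon$, a separate a priori estimate obtained by comparing $|Dv_\epsilon|^2$ against a barrier, exactly as in \cite{E}).

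The boundary contribution is handled by the hypothesis $A\Subset B$: on $\partial A'$ we have $\zeta=0$, so $\Phi=\gamma|Dv_\epsilon|^2$ there, which is bounded by $\gamma\sup_B|Dv_\epsilon|^2$, again a bound independent of $\epsilon$ and $\delta$ (the gradient bound near $\partial B$ comes from comparing $v_\epsilon$ with translates/dilates of $u_\delta$ as barriers, using that $u_\delta\in C^{1,1}$ with Lipschitz constant $\le 1$ and the structure of the equation; this is the place the normalization $v_\epsilon=u_\delta$ on $\partial B$ is used). I expect the main obstacle to be precisely the bookkeeping that shows the $\beta_\epsilon'$- and $\beta_\epsilon''$-terms either have the right sign or are absorbed by the $\gamma|Dv_\epsilon|^2$ term with a $\gamma$ independent of $\epsilon$ — this requires writing out $D^2(\beta_\epsilon(|Dv_\epsilon|^2-1))$ honestly and grouping the terms, and verifying that the global gradient bound $\sup_B|Dv_\epsilon|\le C(B)$ holds uniformly (so that $\beta_\epsilon'(|Dv_\epsilon|^2-1)$, though unbounded, never enters with an unfavorable sign that isn't compensated). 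Once the interior estimate $|D^2v_\epsilon|\le C(A)$ is in hand, everything is uniform in $\delta\in(0,1]$ because all constants that entered depended only on $\|f\|_{C^2(\overline B)}$, the ellipticity constant (which is $1$), $\mathrm{dist}(A,\partial B)$, and the uniform gradient bound.
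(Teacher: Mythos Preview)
Your overall strategy---a Bernstein-type interior estimate with a cutoff plus a $|Dv_\epsilon|^2$ corrector---is sound and is in fact closer to Evans's original argument in \cite{E} than to the paper's. The paper instead follows Wiegner \cite{W}: it works with the full Hessian via the auxiliary function
\[
\phi_\epsilon(x)=\tfrac12\,\eta(x)^2|D^2v_\epsilon(x)|^2+\eta(x)\,\lambda\,\beta_\epsilon(|Dv_\epsilon(x)|^2-1)+\tfrac{\mu}{2}|Dv_\epsilon(x)|^2,
\]
with the key twist that $\lambda=\lambda_\epsilon:=2M_\epsilon$ (where $M_\epsilon=\max_{\overline B}\eta|D^2v_\epsilon|$) is chosen to depend on the unknown itself. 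This choice is what forces the $\beta_\epsilon''$ contribution $4\beta''\eta|D^2vDv|^2(\lambda-\eta|D^2v|)$ to be nonnegative at the maximum; without it, the term $D^2v\cdot D^2\Delta v$ produces a $\beta''$ piece of indefinite sign. Your one-direction-at-a-time route sidesteps this entirely (the $\beta''$ term is automatically a square), which is a genuine simplification, at the cost of needing the trace identity together with the separate local bound $\beta_\epsilon(|Dv_\epsilon|^2-1)\le C$ to recover a two-sided estimate on $D^2v_\epsilon$.

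There is, however, a real gap in your $\beta_\epsilon'$ bookkeeping, and this is exactly where such arguments live or die. The term ``$\gamma\,\beta_\epsilon'\,|D(|Dv_\epsilon|^2)|^2$'' does \emph{not} arise from $\gamma|Dv_\epsilon|^2$: one has $\Delta(|Dv|^2)=2|D^2v|^2+2Dv\cdot D\Delta v$, and the only $\beta'$ contribution this produces is $4\gamma\beta'\,D^2vDv\cdot Dv$, which has no sign. What actually happens at a maximum of $\Phi=\zeta w+\gamma|Dv|^2$ is that this $4\gamma\beta'\,D^2vDv\cdot Dv$ exactly cancels the part of the drift term $2\zeta\beta'\,Dv\cdot Dw$ obtained after substituting $\zeta Dw=-wD\zeta-2\gamma D^2vDv$ from $D\Phi=0$. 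The residual bad $\beta'$ term is $-2\beta'\,w\,Dv\cdot D\zeta$, and it is \emph{not} absorbed by taking $\gamma$ large; rather, it must be dominated by the good term $2\zeta\beta'\,|D(D_\xi v)|^2\ge 2\zeta\beta'\,w^2$ coming from the twice-differentiated equation itself, via the dichotomy ``either $\zeta w\le C$ at the maximum, or the net $\beta'$ contribution is nonnegative and may be discarded.'' If you redo the computation with this mechanism in place (and square the cutoff to control the $1/\zeta$ factors coming from $2D\zeta\cdot Dw$), your scheme does go through; as written, the absorption step appeals to a term that simply is not present.
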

We prove the above theorem by establishing several lemma.  We acknowledge that $v_\epsilon$ depends on $\delta$ and the overall goal is to obtain an estimate on $D^2v_\epsilon$ 
that is also independent of $\delta$.  For now, we consider $\delta>0$ to be fixed and later see how to establish such an uniform estimate while proving Theorem \ref{D2Vbounds} (see Corollary \ref{goodD2Vbounds}).

\begin{lem}
There is a constant $C$, independent of $\epsilon>0$, such that 
$$
|v_\epsilon(x)|\le C, \quad x\in \overline{B}.
$$
\end{lem}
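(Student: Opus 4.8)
The plan is to use the comparison principle for the semilinear equation \eqref{betaEq} together with explicit barriers built from the already-known sub- and supersolutions of \eqref{deltaPDE}. The key point is that the penalization term $\beta_\epsilon(|Dv|^2-1)$ is nonnegative and vanishes wherever $|Dv| \le 1$, so functions with Lipschitz constant at most $1$ behave like sub/supersolutions of the plain linear equation $\delta v - \Delta v = f$. First I would recall from Section \ref{EllipEst} that $u_\delta$ itself satisfies $(|x|-K)^+ \le u_\delta(x) \le \frac{K}{\delta} + |x|$ and $\mathrm{Lip}[u_\delta] \le 1$; in particular $\|u_\delta\|_{L^\infty(\overline{B})}$ is bounded by a constant depending only on $B$ and $\delta$ (and $\delta$ is fixed throughout this lemma, so that is harmless). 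This controls the boundary data $v_\epsilon = u_\delta$ on $\partial B$.

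For the upper bound, I would take the comparison function $\overline{w}(x) := \max_{\partial B} u_\delta + C_0(\,\mathrm{diam}(B)^2 - |x - x_B|^2\,)$ for a suitable center $x_B$ and a constant $C_0$ chosen large enough that $-\Delta \overline{w} = 2nC_0 \ge f$ on $B$ (possible since $f$ is continuous, hence bounded on the compact set $\overline{B}$). Then $\delta \overline{w} - \Delta \overline{w} + \beta_\epsilon(|D\overline{w}|^2 - 1) \ge \delta \overline{w} + 2nC_0 \ge f$ on $B$ provided in addition $\delta \overline{w} \ge 0$ there, which one arranges by adding a further nonnegative constant if necessary (using $\overline{w} \ge \max_{\partial B} u_\delta$, which is itself nonnegative since $u_\delta \ge 0$). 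Also $\overline{w} \ge u_\delta = v_\epsilon$ on $\partial B$. Since \eqref{betaEq} is a uniformly elliptic semilinear equation with the nonlinearity monotone in $v$ and only gradient-dependent otherwise, the classical comparison principle (a standard variant of Theorem 10.1 in \cite{GT}, or directly: at an interior maximum of $v_\epsilon - \overline{w}$ the gradients agree so the penalty terms cancel and one reduces to the linear comparison) yields $v_\epsilon \le \overline{w} \le C$ on $\overline{B}$, with $C$ independent of $\epsilon$.

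For the lower bound I would symmetrically use $\underline{w}(x) := \min_{\partial B} u_\delta - C_1 |x - x_B|^2 - C_2$ with $C_1, C_2$ chosen so that $\underline{w} \le u_\delta$ on $\partial B$ and $\delta \underline{w} - \Delta \underline{w} + \beta_\epsilon(|D\underline{w}|^2-1) \le \delta\underline{w} + 2nC_1 + \beta_\epsilon(4C_1^2|x-x_B|^2 - 1)$; here the penalty term is a genuine nuisance for the \emph{lower} barrier, so instead I would simply take $\underline{w}$ to be an affine (hence $1$-Lipschitz if the slope is chosen $\le 1$, or any slope with the penalty then a fixed constant) or just constant function: $\underline{w} \equiv \min_{\partial B} u_\delta - M$ with $M$ large. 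For a constant function $|D\underline{w}| = 0$, so $\beta_\epsilon(|D\underline{w}|^2-1) = \beta_\epsilon(-1) = 0$, and $\delta \underline{w} - \Delta \underline{w} = \delta \underline{w} \le f$ once $M$ is large enough that $\delta(\min_{\partial B}u_\delta - M) \le \min_{\overline B} f$; and $\underline{w} \le u_\delta = v_\epsilon$ on $\partial B$. Comparison then gives $v_\epsilon \ge \underline{w} \ge -C$ on $\overline{B}$.

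The main obstacle is purely bookkeeping: making sure the comparison principle is applied in a form valid for \eqref{betaEq}, i.e. that the gradient nonlinearity $\beta_\epsilon(|p|^2-1)$ does not spoil the argument. This is handled by the standard observation that at an interior extremum of the difference of a subsolution and supersolution the first-order terms coincide, so the $\beta_\epsilon$ contributions cancel and one is left with the elementary maximum-principle inequality for $\delta v - \Delta v$; the monotonicity $\delta > 0$ in the zeroth-order term then closes it. Everything else (boundedness of $f$ on $\overline B$, of $u_\delta$ on $\partial B$, existence of the quadratic/constant barriers) is routine, and the resulting constant $C$ depends on $B$, $\delta$, $\|f\|_{L^\infty(\overline B)}$ and $\|u_\delta\|_{L^\infty(\partial B)}$ but not on $\epsilon$, as required.
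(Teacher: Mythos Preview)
Your argument is correct and proves the lemma as stated. The comparison principle you invoke is indeed valid here: at an interior maximum of the difference of two classical solutions/supersolutions the gradients coincide, so the $\beta_\epsilon$ terms cancel and the zeroth-order monotonicity $\delta>0$ finishes the job. Your explicit quadratic supersolution and constant subsolution do the work, and neither depends on $\epsilon$.

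The paper, however, makes a more deliberate choice of barriers. For the lower bound it uses $u_\delta$ itself (observing that $|Du_\delta|\le 1$ forces $\beta_\epsilon(|Du_\delta|^2-1)=0$, so $u_\delta$ is a viscosity subsolution of \eqref{betaEq}), and for the upper bound it uses the solution $w$ of the linear problem $\delta w-\Delta w=f$ in $B$ with $w=u_\delta$ on $\partial B$. The point of these particular barriers is that both agree with $v_\epsilon$ on $\partial B$, so the sandwich $u_\delta\le v_\epsilon\le w$ with equality on the boundary immediately yields the boundary gradient estimate in the very next corollary via
\[
\frac{\partial w}{\partial\nu}\le \frac{\partial v_\epsilon}{\partial\nu}\le \frac{\partial u_\delta}{\partial\nu}\quad\text{on }\partial B.
\]
Your barriers, being strict on $\partial B$, would not give this for free; you would have to redo the boundary gradient bound with separate barriers that touch on $\partial B$. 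So your approach is slightly more elementary for this lemma in isolation, but the paper's choice is tailored to feed directly into what comes next.
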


\begin{proof}
\eqref{betaEq} enjoys a comparison principle for viscosity sub- and supersolutions.  It is easily verified
that $u_\delta$ is a viscosity sub-solution of \eqref{betaEq} and therefore
$$
u_\delta\le v_\epsilon.
$$
Likewise, the same comparison principle establishes
$$
v_\epsilon\le w
$$
where $w$ is the unique solution of the PDE

$$
\begin{cases}
\delta w - \Delta w =f, \; \; x\in B\\
\hspace{.55in}w=u_\delta, \; x\in \partial B
\end{cases}.
$$
\end{proof}

\begin{cor}
There is a constant $C$, independent of $\epsilon>0$, such that 
$$
|Dv_\epsilon(x)|\le C, \quad x\in \partial B.
$$
\end{cor}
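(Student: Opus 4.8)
The plan is a routine boundary-gradient estimate resting on the two-sided bound already obtained. Recall from the proof of the preceding lemma that $u_\delta \le v_\epsilon \le w$ on $\overline{B}$, where $w$ is the unique solution of the linear Dirichlet problem $\delta w-\Delta w=f$ in $B$, $w=u_\delta$ on $\partial B$, and that $u_\delta=v_\epsilon=w$ on $\partial B$. All three functions are of class $C^1(\overline{B})$: this was noted for $v_\epsilon$ when \eqref{betaEq} was solved, it holds for $u_\delta$ by Proposition \ref{Basic2ndDer} (indeed $u_\delta\in C^{1,1}(\R^n)$), and it holds for $w$ by standard linear elliptic regularity, since $\partial B$ is smooth, $f\in C^\infty$, and the boundary datum $u_\delta$ is $C^{1,1}$ (global $W^{2,p}$ estimates followed by Sobolev embedding).

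First I would control the tangential component of $Dv_\epsilon$ on $\partial B$. Since $v_\epsilon$ and $u_\delta$ agree on $\partial B$ and $u_\delta$ is $1$-Lipschitz, the tangential gradient of $v_\epsilon$ along $\partial B$ has norm at most $1$ at every boundary point. Next I would control the normal component. Fix $x_0\in\partial B$ and let $\nu$ be the inward unit normal at $x_0$. The functions $v_\epsilon-u_\delta$ and $w-v_\epsilon$ are nonnegative in $B$, vanish at $x_0$, and are $C^1$ up to $\partial B$, so their inward normal derivatives at $x_0$ are $\ge 0$; that is,
$$
\partial_\nu u_\delta(x_0)\ \le\ \partial_\nu v_\epsilon(x_0)\ \le\ \partial_\nu w(x_0).
$$
Both outer bounds are finite and independent of $\epsilon$ (they depend only on the fixed $\delta$, which is harmless here), by the $C^1(\overline{B})$ regularity just recorded. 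Writing $|Dv_\epsilon(x_0)|^2$ as the sum of the squared tangential and normal parts then gives $|Dv_\epsilon(x_0)|\le C$ with $C:=\big(1+\max\{\|\partial_\nu u_\delta\|_{L^\infty(\partial B)},\|\partial_\nu w\|_{L^\infty(\partial B)}\}^2\big)^{1/2}$, uniformly in $0<\epsilon<1$.

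I do not expect a genuine obstacle here; the only point deserving a word of care is the assertion that $w\in C^1(\overline B)$ with a bound on $\partial_\nu w$, which is exactly where one uses the smoothness of $\partial B$ and the $C^{1,1}$ regularity of the boundary datum. Should one wish to bypass regularity of $w$ altogether, the same one-sided normal-derivative bounds can be produced directly by inserting explicit super- and subsolution barriers built from the interior and exterior sphere conditions of the ball (a $C^{1,1}$ extension of the boundary datum perturbed by a term of the form $a\,d(x)-b\,d(x)^2$, with $d$ the distance to $\partial B$) and applying the comparison principle for \eqref{betaEq}; this route yields the corollary just as quickly.
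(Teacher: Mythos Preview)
Your proof is correct and follows essentially the same approach as the paper: both use the sandwich $u_\delta \le v_\epsilon \le w$ with equality on $\partial B$ to bound the normal derivative of $v_\epsilon$ between those of $u_\delta$ and $w$. Your version is more explicit about the tangential component and the regularity of $w$, but the argument is the same.
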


\begin{proof} 
We have from the proof of the previous lemma that $u_\delta\le v_\epsilon\le w$ and equality holds on $\partial B$. Hence,
$$
\frac{\partial w(x)}{\partial \nu}\le \frac{\partial v_\epsilon(x)}{\partial \nu}\le \frac{\partial u_\delta(x)}{\partial \nu}, \quad x\in \partial B.
$$
\end{proof}


\begin{lem}
There is a constant $C$, independent of $0<\epsilon<1$, such that 
$$
|Dv_\epsilon(x)|\le C, \quad x\in \overline{B}.
$$
\end{lem}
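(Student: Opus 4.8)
The plan is to establish an interior gradient bound for $v_\epsilon$ on $\overline{B}$ that is uniform in $0<\epsilon<1$, by combining the boundary gradient bound from the previous corollary with an interior Bernstein-type argument. The key point is that the penalty term $\beta_\epsilon(|Dv_\epsilon|^2-1)$ has a favorable sign: since $\beta_\epsilon'\ge 0$, differentiating the PDE produces terms that work in our favor when we look at the equation satisfied by $w:=|Dv_\epsilon|^2$.

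First I would derive the PDE for $w=|Dv_\epsilon|^2$. Writing $v=v_\epsilon$ and differentiating \eqref{betaEq} in the direction $x_k$, then multiplying by $2v_{x_k}$ and summing, one obtains (using $\Delta w = 2\sum_{i,k}v_{x_ix_k}^2 + 2\sum_k v_{x_k}\Delta v_{x_k}$ and $|Dw|^2 = 4\sum_k\big(\sum_i v_{x_i}v_{x_ix_k}\big)^2$) an identity of the schematic form
$$
-\Delta w + \beta_\epsilon'(|Dv|^2-1)\,Dv\cdot Dw + 2|D^2v|^2 + 2\delta w = 2 Dv\cdot Df.
$$
Here the crucial observations are: the coefficient of $Dw$ in the first-order term involves $\beta_\epsilon'\ge 0$, so this is a linear uniformly elliptic operator with a (sign-uncontrolled but bounded-in-the-right-way) drift; the term $2|D^2v|^2 \ge 0$ and $2\delta w\ge 0$ can be dropped to get a differential inequality; and by Cauchy-Schwarz $2Dv\cdot Df \le |Dv|^2 + |Df|^2 = w + |Df|^2 \le w + C_B$ on $\overline B$, since $f\in C^\infty$ and $B$ is fixed. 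Thus $w$ is a subsolution of $-\Delta w + b_\epsilon\cdot Dw - w \le C_B$ for a drift $b_\epsilon = \beta_\epsilon'(|Dv|^2-1)Dv$. The difficulty is that $b_\epsilon$ is not obviously bounded independently of $\epsilon$. The standard fix, which I would use, is to instead run the argument on the auxiliary function $\tilde w := \zeta^2 w$ for a suitable cutoff $\zeta\in C_c^\infty(B)$, or more simply to note that it suffices to bound $w$ at its interior maximum: if the maximum of $\zeta^2 w$ over $\overline B$ is attained at an interior point $x_0$, then at $x_0$ the first-order terms of $\zeta^2 w$ vanish and $D^2(\zeta^2 w)\le 0$, which — after expanding the cutoff derivatives and using Cauchy-Schwarz to absorb the cross terms into $\epsilon|D^2v|^2$ — yields a bound $\zeta^2(x_0)w(x_0)\le C$ with $C$ depending only on $B$, $\|f\|_{C^1(\overline B)}$, and $\|\zeta\|_{C^2}$, but crucially \emph{not} on $\epsilon$ (the penalty term, being a nonnegative multiple of $Dv\cdot Dw$, contributes with a sign that is either harmless at the maximum or can be discarded). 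If instead the maximum of $\zeta^2 w$ is on $\partial B$ one is done by the previous corollary. Running this with finitely many cutoffs covering $\overline B$ (shrinking slightly, then using the boundary bound to glue) gives the claimed uniform bound on all of $\overline B$.

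The main obstacle I anticipate is handling the penalty drift term $\beta_\epsilon'(|Dv|^2-1)Dv\cdot Dw$ correctly: one must be careful that at the maximum point of $\zeta^2 w$ this term has the right sign (or vanishes), rather than blowing up. The clean way is exactly the maximum-principle argument sketched above: at an interior maximum $x_0$ of $\zeta^2 w$ we have $D(\zeta^2 w)(x_0)=0$, hence $Dw(x_0) = -w(x_0)D(\zeta^2)(x_0)/\zeta^2(x_0)$, so $\beta_\epsilon' Dv\cdot Dw$ becomes $-\beta_\epsilon' w\, Dv\cdot D(\zeta^2)/\zeta^2$; this term is a nonnegative multiple of $\beta_\epsilon'$ times something bounded by $C w$, and it enters the inequality on the correct side to be absorbed, OR one drops $\beta_\epsilon'\ge 0$ entirely after checking the sign. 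Either way the $\epsilon$-dependence disappears. The Lipschitz bound for $v_\epsilon$ on $\overline B$ then follows from $|Dv_\epsilon| = w^{1/2}\le C$. I note for later use (Corollary \ref{goodD2Vbounds} and Theorem \ref{D2Vbounds}) that the constant $C$ here depends on $\delta$ only through $\|Du_\delta\|_{C(\partial B)}\le 1$ and $\|u_\delta\|_{C(\overline B)}$, both of which are already bounded uniformly in $0<\delta\le 1$ by the sub/supersolution estimates in Section \ref{EllipEst}, so in fact the bound is uniform in $\delta$ as well.
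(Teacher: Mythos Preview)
Your Bernstein-type strategy is reasonable, but the cutoff version you describe has a genuine gap in handling the penalty drift. At an interior maximum $x_0$ of $\zeta^2 w$ (with $w=|Dv_\epsilon|^2$), the first-order condition gives $\zeta\, Dw = -2w\, D\zeta$, so the drift term becomes
\[
2\beta_\epsilon'(|Dv|^2-1)\, Dv\cdot Dw \;=\; -\frac{4\,\beta_\epsilon'\, w}{\zeta}\, Dv\cdot D\zeta.
\]
The sign of $Dv\cdot D\zeta$ is undetermined, while $\beta_\epsilon'$ may be of order $1/\epsilon$; there is no other term in your inequality carrying a factor of $\beta_\epsilon'$ into which this could be absorbed, so the assertion that it ``enters on the correct side'' or ``can be discarded'' is not justified. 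Your gluing step is also problematic: cutoffs compactly supported in $B$ cannot cover a neighborhood of $\partial B$, and the previous corollary bounds $|Dv_\epsilon|$ only on $\partial B$ itself, not in a collar.

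The paper's device is to take instead $\phi = |Dv|^2 - \lambda v$ (no cutoff) for a parameter $\lambda>0$ and maximize over~$\overline B$. At an interior maximum, $Dw=\lambda\,Dv$, hence $Dv\cdot Dw=\lambda|Dv|^2\ge 0$: the penalty drift now has a \emph{definite} favorable sign. One then splits on the size of $\beta_\epsilon'$ at the maximum point: if $\beta_\epsilon'(|Dv(x_0)|^2-1)<1$, the explicit structure of $\beta_\epsilon$ (namely $\beta_\epsilon'(z)=1/\epsilon$ for $z\ge 2\epsilon$) forces $|Dv(x_0)|^2-1\le 2\epsilon\le 2$; if $\beta_\epsilon'\ge 1$, the inequality at $x_0$ reduces to $0\ge -(|Dv|^2+C)+\lambda|Dv|^2$, which for $\lambda$ large bounds $|Dv(x_0)|$. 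If the maximum lies on $\partial B$, the previous corollary applies directly. (Incidentally, dropping the cutoff entirely and maximizing $w$ itself over $\overline B$ also works---at an interior max $Dw=0$ kills the penalty term outright---but the resulting bound then involves $1/\delta$; this is still enough for the lemma as stated, though not for the $\delta$-independence you claim at the end.)
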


\begin{proof}
For ease of notation we write $v$ for $v_\epsilon$. Set 
$$
\phi(x) := |Dv(x)|^2 - \lambda v(x),\; x\in \overline{B}
$$
for $\lambda>0$ which will be chosen below. To prove the claim, it suffices to bound $\phi$ from above.

\par  Direct computation gives

\begin{equation}\label{Dvineq}
\begin{cases}
D\phi=2D^2vDv-\lambda Dv\\
\Delta \phi\ge - (|Dv|^2 +C) + \beta'(|Dv|^2-1)\left(2Dv\cdot D\varphi + \lambda |Dv|^2\right)
\end{cases}
\end{equation}
for some constant $C$ independent of $\epsilon.$ Choose $x_0$ that maximizes $\phi.$ If $x_0\in \partial B$, the bound on $\phi$ follows from the previous lemma. If 
$$
\beta'(|Dv(x_0)|^2-1)< 1
$$
then 
$$
\beta'(|Dv(x_0)|^2-1)<\frac{1}{\epsilon}
$$
which implies $\beta(|Dv(x_0)|^2-1)\le 1$ and in particular \eqref{betaCOND} implies $|Dv(x_0)|^2-1\le 2\epsilon\le 2$. Therefore, if $x\in \partial B$ or $\beta'(|Dv(x_0)|^2-1)< 1$, then $\phi$ is bounded above. 

\par Alternatively, if  

$$
x_0\in B \quad \text{and} \quad \beta'(|Dv(x_0)|^2-1)\ge 1,
$$
we have from computation \eqref{Dvineq}

$$
0\ge - |Dv(x_0)|^2 +C +\lambda |Dv(x_0)|^2
$$
which, for $\lambda$ chosen large enough and independently of $0<\epsilon<1$ implies a bound on $|Dv(x_0)|^2$ and in turn on $\phi$. 
\end{proof}

\begin{lem}
For each $B'\Subset B$, there is a constant $C=C(B')$, independent of $0<\epsilon<1$, such that 
$$
\beta_\epsilon(|Dv_\epsilon(x)|^2-1)\le C, \quad x\in B'.
$$
\end{lem}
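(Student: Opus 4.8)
The goal is an interior bound on the penalty term $\beta_\epsilon(|Dv_\epsilon|^2-1)$. The natural strategy is an Evans-style estimate using a cutoff function: multiply the quantity we want to control by a test function supported near $B'$ and maximize. Specifically, I would fix a smooth cutoff $\zeta$ with $\zeta \equiv 1$ on $B'$, $\zeta \ge 0$, and $\operatorname{supp}\zeta \Subset B$, and consider
$$
\phi(x) := \zeta(x)\,\beta_\epsilon(|Dv_\epsilon(x)|^2-1) + \mu |Dv_\epsilon(x)|^2, \quad x \in \overline{B},
$$
(or a variant where $\beta_\epsilon$ is replaced by $\beta_\epsilon$ composed with a convexified argument). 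Here $\mu$ is an auxiliary constant to be chosen. Since $\zeta$ vanishes near $\partial B$ and $|Dv_\epsilon|$ is already bounded on $\overline{B}$ by the previous lemma, the maximum of $\phi$ is attained at an interior point $x_0$ where $\zeta(x_0)>0$, or else $\phi$ is controlled by the gradient bound alone.

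The key computational step is to differentiate the PDE \eqref{betaEq}. Writing $v$ for $v_\epsilon$, $w := |Dv|^2-1$, and $b := \beta_\epsilon(w)$, one differentiates $\delta v - \Delta v + \beta_\epsilon(w) = f$ in the direction $e_k$, multiplies by $D_k v$, and sums, obtaining an equation for $w$ of roughly the form
$$
\delta w - \Delta w + \beta_\epsilon'(w)\, Dw\cdot \text{(something)} \;=\; \text{(lower order in }D^2v\text{, involving }D^2f, |D^2v|^2\text{)}.
$$
Crucially $-2|D^2v|^2$ appears on the right side with a good sign, and $\beta_\epsilon''(w) \ge 0$ also has the right sign. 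At the interior maximum $x_0$ of $\phi$ one has $D\phi(x_0)=0$ and $\Delta\phi(x_0)\le 0$; expanding these, using $D\phi = 0$ to substitute for $\beta_\epsilon'(w)Dw$ in terms of $\zeta$-derivatives and $\mu D w$, and using $-2|D^2v|^2 \le -\tfrac{2}{n}(\Delta v)^2$ together with $\Delta v = \delta v - f + \beta_\epsilon(w)$, one derives an inequality of the shape
$$
0 \;\ge\; \zeta(x_0)\big(c\,\beta_\epsilon(w(x_0))^2 - C\beta_\epsilon(w(x_0)) - C\big) - C,
$$
where $C$ absorbs the bounds on $v$, $Dv$, $f$, $D^2f$ on $\overline B$ and on $|D\zeta|,|D^2\zeta|$, and the term $\mu|Dv|^2$ is bounded anyway. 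Since the $\beta_\epsilon(w(x_0))^2$ term dominates, this forces $\zeta(x_0)\beta_\epsilon(w(x_0)) \le C$, hence $\phi(x_0) \le C$, hence $\beta_\epsilon(|Dv_\epsilon|^2-1) \le C$ on $B'$ since $\zeta\equiv 1$ there. Care must be taken that the regions where $\beta_\epsilon''$ or $\beta_\epsilon'$ is small are handled separately (as in the previous lemma: if $\beta_\epsilon'(w(x_0)) < 1$ then $w(x_0)\le 2\epsilon$ and $\beta_\epsilon(w(x_0))$ is bounded directly).

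The main obstacle is getting the quadratic gain in $\beta_\epsilon$ to beat all the error terms uniformly in $\epsilon$. The troublesome point is the cross term $\beta_\epsilon'(w)\,D\zeta\cdot Dw$ coming from $\Delta(\zeta b)$: $Dw$ involves $D^2v$, which near a point where $\beta_\epsilon(w)$ is large could be as big as $\beta_\epsilon(w)/\epsilon$-order if not controlled. One must absorb $|D\zeta||D^2v|\cdot\beta_\epsilon'(w)$ into the good $-\zeta|D^2v|^2$ term (possible because $D\zeta$ is supported where $\zeta$ could still be positive — this is why a cutoff of the form $\zeta = \eta^2$ or $\zeta$ with $|D\zeta|^2 \le C\zeta$ is used) and into the $\beta_\epsilon(w)^2$ term via Young's inequality, using $\beta_\epsilon'(w) \le C(1+\beta_\epsilon(w))$ which follows from \eqref{betaCOND} (indeed $\beta_\epsilon'(w) = 1/\epsilon$ and $\beta_\epsilon(w) = (w-\epsilon)/\epsilon$ for $w \ge 2\epsilon$, so $\beta_\epsilon'(w) \le \beta_\epsilon(w)+1$ there, while for $w < 2\epsilon$ both $\beta_\epsilon(w)$ is bounded and we're in the easy case). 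Once these absorptions are arranged with the correct choice of $\mu$ and cutoff, the estimate closes, and the constant $C$ depends only on $B'$, $\|f\|_{C^2(\overline B)}$, and the already-established bounds on $v_\epsilon$ and $Dv_\epsilon$ — in particular it is independent of $\epsilon$, which is exactly what is claimed.
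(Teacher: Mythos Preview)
Your plan is essentially the paper's: localize with a cutoff $\xi\in C_c^\infty(B)$, study $\phi=\xi\beta_\epsilon(|Dv_\epsilon|^2-1)$ at an interior maximum, differentiate the equation, and use the good term $2\beta'\xi|D^2v|^2$ together with the case split $\beta'\lessgtr 1$. The paper's closing step is slightly different from yours: rather than converting $|D^2v|^2$ into $\beta^2$ via $|D^2v|^2\ge\tfrac1n(\Delta v)^2$, it simply reads off $0\ge\xi|D^2v|^2-C|D^2v|-C$ (when $\beta'\ge1$), deduces that $\xi(x_0)|D^2v(x_0)|$ is bounded, and then uses $\beta=\Delta v-\delta v+f\le C(1+|D^2v|)$ to conclude $\xi(x_0)\beta(x_0)\le C$.

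One small correction: your parenthetical inequality $\beta_\epsilon'(w)\le\beta_\epsilon(w)+1$ for $w\ge2\epsilon$ is false (at $w=2\epsilon$ the left side is $1/\epsilon$ and the right side is $2$). Fortunately you do not need it. With $\zeta=\eta^2$ so that $|D\zeta|\le C\sqrt{\zeta}$, the cross term $\beta'|D\zeta|\,|D^2v|$ is absorbed by Young directly into $\tfrac12\zeta\beta'|D^2v|^2+C\beta'$, and the leftover $C\beta'$ just contributes the harmless $-C$ inside the braces before you divide by $\beta'$ in the $\beta'\ge1$ case. After that, either your quadratic-in-$\beta$ route or the paper's $\xi|D^2v|$ route closes the estimate; the auxiliary term $\mu|Dv|^2$ is unnecessary here.
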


\begin{proof} 
1. It suffices to bound 
$$
\phi_\epsilon(x) = \xi(x)\beta_\epsilon(|Dv_\epsilon(x)|^2-1), \; x\in B
$$
for each $\xi\in C^\infty_c(B)$, $0\le \xi\le 1.$  For ease of notation we will omit the $\epsilon$ subscripts, function arguments and write $\beta$ for $\beta_\epsilon(|Dv_\epsilon|^2-1)$. Using the fact that $\beta$
is convex and that 
$$
\beta =\Delta v -\delta v + f\le C\{1+|D^2v|\}
$$
gives
\begin{eqnarray}
D\phi &=& \beta D\xi + \xi D\beta \nonumber \\
\Delta \phi &=& \Delta\xi \beta + 2  D\xi \cdot D\beta  + \xi \Delta\beta \nonumber \\
&=&  \Delta\xi \beta + 4\beta'  D\xi \cdot D^2vDv  + \xi (\beta''|2D^2vDv|^2 + 2\beta'(|D^2v|^2 + Dv\cdot D\Delta v) ) \nonumber \\
&\ge & - C(1+|D^2v|) - C\beta' |D^2v| + 2\beta' (\xi |D^2v|^2 + Dv\cdot \xi D\beta +\xi Dv\cdot D(\delta v -f))\nonumber \\
&\ge & - C(1+|D^2v|) + 2\beta'\left\{ \xi |D^2v|^2 + D\phi\cdot Dv  - \beta D\xi\cdot Dv-C|D^2v| - C\right\}\nonumber \\
&\ge & - C(1+|D^2v|) + 2\beta'\left\{ \xi |D^2v|^2 + D\phi\cdot Dv  -C|D^2v| - C\right\}\nonumber 
\end{eqnarray}
for various constants $C$ independent of $0<\epsilon <1$ (although dependent on $\xi$).

\par 3. Choose a maximizing point $x_0$ for $\phi$. If $x_0\in \partial B$,
$\phi\le \phi(x_0)=0.$ Now suppose that $x_0\in B$.
Necessarily 
$$
D\phi(x_0)=0\quad \text{and}\quad 0\ge \Delta \phi(x_0),
$$
and the above computations imply that at the point $x_0$
\begin{equation}\label{W2pIneq}
0\ge - C(1+|D^2v|) + 2\beta'\left\{ \xi |D^2v|^2 -C|D^2v| - C\right\}.
\end{equation}
If $\beta'\le 1<1/\epsilon$, then $\beta\le 1$ and thus $v$ is bounded uniformly from above. 
If $\beta'\ge 1$, \eqref{W2pIneq} gives
$$
0\ge\beta'\left\{ \xi |D^2v|^2 -C|D^2v| - C\right\}.
$$
As $\beta'>0$,
$$
0\ge \xi |D^2v|^2 -C|D^2v| - C
$$
which implies a bound on $\xi(x_0)|D^2v(x_0)|$ that is independent of $0<\epsilon<1.$ We conclude as
$$
\phi\le \phi(x_0)=\xi(x_0)\beta(|Dv(x_0)|^2-1)\le C\left(\xi(x_0)|D^2v(x_0)|+1\right)\le C.
$$
\end{proof}

\begin{proof}(of Theorem \ref{D2Vbounds})
1. It suffices to bound the quantity
$$
M_\epsilon:=\max_{x\in \overline{B}}\left\{\eta(x)|D^2v_\epsilon(x)|\right\}, 
$$
uniformly in all $\epsilon>0$ and small enough, for each $\eta\in C^\infty_c(B), 0\le \eta\le 1$.  To this end, we bound from the above the quantity
$$
\phi_\epsilon(x)=\frac{1}{2}\eta(x)^2|D^2v_{\epsilon}(x)|^2 + \eta(x)\lambda\beta_\epsilon(|Dv_\epsilon(x)|^2-1) +\frac{\mu}{2}|Dv_\epsilon(x)|^2, \quad x\in \overline{B}
$$
where $\lambda$ and $\mu$ are positive constants that will be chosen below. For ease of notation, we will omit $\epsilon$ dependence, function arguments and
write $\beta$ for  $\beta_\epsilon(|Dv_\epsilon|^2-1)$. We follow the arguments of Wiegner \cite{W} closely here. 

\par 2. As in previous arguments, we perform various computations that will help us study $\phi$ near its maximum value. 

\begin{align*}
\begin{cases}
\phi_{x_i}&=\eta\eta_{x_i}|D^2v|^2 + \eta^2D^2v\cdot D^2v_{x_i}+ \lambda(\eta_{x_i}\beta + 2\eta \beta' Dv\cdot Dv_{x_i}) + \mu Dv\cdot Dv_{x_i}, \quad i=1,\dots,n\\
\Delta \phi &= (\Delta\eta +|D\eta|^2)|D^2v|^2 + 4 \eta^2\sum^n_{i=1}\eta_{x_i}D^2v\cdot D^2v_{x_i} + \eta^2(|D^3v|^2 + D^2v\cdot D^2\Delta v)\\
& +\lambda \left[ \Delta\eta \beta + 4\beta'D^2vDv\cdot D\eta +\eta\left\{\beta''|2D^2vDv|^2 + 2\beta'(|D^2v|^2 + Dv\cdot D\Delta v)\right\}\right]  \\
& +\mu (|D^2v|^2 + Dv\cdot D\Delta v).
\end{cases}
\end{align*}
As 
$$
\Delta v=\beta +\delta v -f,
$$
we have for $i,j=1,\dots,n$
$$
\begin{cases}
(\Delta v)_{x_i}=2\beta'Dv\cdot Dv_{x_i} + \partial_{x_i}(\delta v -f)\\
(\Delta v)_{x_ix_j}=4\beta''(Dv\cdot Dv_{x_i})(Dv\cdot Dv_{x_j}) + 2\beta'(Dv_{x_i}\cdot Dv_{x_j} +Dv\cdot Dv_{x_ix_j})+\partial^2_{x_ix_j}(\delta v -f).
\end{cases}
$$
Substituting these values into the expression above we have for $\Delta v$ gives
\begin{align*}
\Delta \phi &= (\Delta\eta +|D\eta|^2)|D^2v|^2 + 4 \eta^2\sum^n_{i=1}\eta_{x_i}D^2v\cdot D^2v_{x_i} + \eta^2\text{\huge{[}}|D^3v|^2 +4\beta'' D^2v(D^2vDv)(D^2vDv) \\
& \left.+ 2\beta'\left(D^2v\cdot(D^2v)^2 + \sum^n_{i=1}v_{x_ix_j}Dv\cdot Dv_{x_ix_j}\right) + D^2v\cdot D^2(\delta v -f)\right]   +\lambda \left[ \Delta\eta \beta \right.\\
& \left. + 4\beta'D^2vDv\cdot D\eta+\eta\left\{\beta''|2D^2vDv|^2 + 2\beta'(|D^2v|^2 + 2\beta' D^2vDv\cdot Dv + Dv\cdot D(\delta v -f))\right\}\right]  \\
& + \mu (|D^2v|^2 + 2\beta'Dv\cdot D^2vDv + Dv\cdot D(\delta v-f)).
\end{align*}
Moreover, using our computation of $\phi_{x_i}$ gives our final expression for $\Delta\phi$
\begin{align}\label{D2vEstimate}
\Delta \phi &= (\Delta\eta +|D\eta|^2)|D^2v|^2 + 4 \eta\sum^n_{i=1}\eta_{x_i}D^2v\cdot \eta D^2v_{x_i} + \eta^2|D^3v|^2 + \lambda\Delta\eta\beta \nonumber \\
& 4\beta''\eta\{D^2v(D^2vDv)(D^2vDv) +\lambda |D^2vDv|^2\} +\eta^2D^2v\cdot D^2(\delta v -f) + 2\beta' Dv\cdot D\phi \nonumber \\
&+\beta'\left[\lambda\left(4 D^2vDv\cdot D\eta +2\eta|D^2v|^2 - 2\beta Dv\cdot D\eta + \eta Dv\cdot D(\delta v-f)\right) -2\eta |D^2v|^2 D\eta\cdot Dv\right]  \nonumber \\
& + \mu (|D^2v|^2  + Dv\cdot D(\delta v-f)).
\end{align}
\par 3. Choose a maximizing point $x_0$ for $\phi$. If $x_0\in \partial B$, we conclude as we already have a uniform gradient 
bound for $v$. Now suppose that $x_0\in B$. By calculus, 
$$
D\phi(x_0)=0\quad \text{and}\quad\Delta \phi(x_0)\le 0;
$$
and from equation \eqref{D2vEstimate}, we have at the point $x_0$
\begin{align}\label{D2vEstimate2}
0&\ge -C(|D^2v|^2+1) + 4\beta''|D^2vDv|^2\eta\{\lambda -\eta |D^2v|\} \nonumber \\
& \quad + 2\beta'\left[\lambda\left(\eta|D^2v|^2 - C - C|D^2v|\right) - C\eta|D^2v|^2\right] +\mu(|D^2v|^2-C)
\end{align}
for various constants $C$ independent of $\epsilon\in(0,1).$ In deriving the above inequality, we used the Cauchy-Schwarz inequality several times, the uniform bounds on
on $v$ and $|Dv|$, and also the fact that 
$$
\beta\le (|Dv|^2-1)\beta'\le C\beta'
$$
to simplify the term  $\lambda\Delta\eta\beta$. The inequality above is due to the uniform gradient bounds on $|Dv|$ and the inequality $\beta(z)\le z\beta'(z)$ which is immediate from 
the convexity of $\beta$ and $\beta(0)=0.$ 

\par 4. Now choose 
$$
\lambda=\lambda_\epsilon:=2M_\epsilon\ge 2\eta(x_0)|D^2v(x_0)|
$$
so that \eqref{D2vEstimate2} becomes
\begin{equation}\label{D2vEstimate3}
0\ge -C(|D^2v|^2+1) + 2\beta'\left[\lambda\left(\eta|D^2v|^2 - C - C|D^2v|\right) - C\eta|D^2v|^2\right] +\mu(|D^2v|^2-C).
\end{equation}
If for this choice of $\lambda$
$$
\lambda\left(\eta|D^2v|^2 - C - C|D^2v|\right) - C\eta|D^2v|^2\le 0,
$$
we would have a bound on $\eta(x_0)|D^2v(x_0)|$ independent of $\epsilon\in (0,1).$ If the above inequality does not hold, then from \eqref{D2vEstimate3} we infer
$$
 0\ge -C(|D^2v|^2+1) +\mu(|D^2v|^2-C).
$$
Clearly, there is a choice of $\mu>0$ so that $\eta(x_0)|D^2v(x_0)|$ is bounded independently of $\epsilon\in (0,1).$

\par 5. Finally, we observe that from our bounds on  $\eta(x_0)|D^2v(x_0)|$ 
\begin{align*}
M_\epsilon^2 &\le \max_{\overline{B}}\phi_\epsilon(x)\\
&= \frac{1}{2}\eta(x_0)^2|D^2v_{\epsilon}(x_0)|^2 + \eta(x_0)\lambda_\epsilon\beta_\epsilon(|Dv_\epsilon(x_0)|^2-1) +\frac{\mu}{2}|Dv_\epsilon(x_0)|^2\\
&\le\frac{1}{2}\eta(x_0)^2|D^2v_{\epsilon}(x_0)|^2+ CM_\epsilon +C \\
&\le C(M_\epsilon+1)
\end{align*}
for some constant $C$, independent of $\epsilon \in (0,1).$ It is now plain that $M_\epsilon$ is bounded independently of $\epsilon \in (0,1).$ 
\end{proof}
A close inspection of the above proof of Theorem \ref{D2Vbounds} reveals that we actually have the following crucial estimate. 
\begin{cor}\label{goodD2Vbounds}
Let $A\Subset B$. Then there is a constant $C'=C'(A)$ such that 
\begin{equation}\label{CRUCIALest}
|D^2v_\epsilon(x)|\le C'\left(1+|\delta v_\epsilon|_{L^\infty(B)} +|Dv_\epsilon|^2_{L^\infty(B)} \right), \; x\in A
\end{equation}
for all $0<\epsilon<1$. 
\end{cor}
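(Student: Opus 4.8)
The plan is to rerun the proof of Theorem~\ref{D2Vbounds} (and of the two lemmas feeding into it) essentially verbatim, but this time keeping track of exactly which of the constants written ``$C$'' there depended on $v_\epsilon$ itself rather than only on $A$, $B$, $n$ and $f$. Inspecting those arguments, $v_\epsilon$ enters the estimates in precisely two guises: (i) through the equation, rewritten as $\beta_\epsilon(|Dv_\epsilon|^2-1)=\Delta v_\epsilon-\delta v_\epsilon+f$, so that $\beta_\epsilon$ is controlled by $|D^2v_\epsilon|$, the fixed quantity $\sup_B f$, and $R:=|\delta v_\epsilon|_{L^\infty(B)}$, while $D(\delta v_\epsilon-f)$ and $D^2(\delta v_\epsilon-f)$ are controlled by $\delta\le 1$, $|D^2v_\epsilon|$ and the fixed derivatives of $f$; and (ii) through the a priori gradient bound, where one meets $|Dv_\epsilon|_{L^\infty(B)}$, $|Dv_\epsilon|^2_{L^\infty(B)}\le G$ with $G:=|Dv_\epsilon|^2_{L^\infty(B)}$, together with $\beta_\epsilon(|Dv_\epsilon|^2-1)\le(|Dv_\epsilon|^2-1)\beta_\epsilon'\le G\,\beta_\epsilon'$ and the auxiliary bound on $\beta_\epsilon(|Dv_\epsilon|^2-1)$ over compact subsets of $B$. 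Re-deriving that auxiliary bound with the same bookkeeping shows its constant is itself $\le C(A)\,N$, where from now on $N:=1+R+G=1+|\delta v_\epsilon|_{L^\infty(B)}+|Dv_\epsilon|^2_{L^\infty(B)}$ and $C(A)$ denotes constants depending only on $A,B,n,f$. The net effect is that every ``$C$'' appearing in the derivation of the key differential inequality \eqref{D2vEstimate2} may be replaced by $C(A)\,N$ --- with one important exception noted below.

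Granting this, I would repeat Steps~3--5 of the proof of Theorem~\ref{D2Vbounds}. At a maximum point $x_0\in B$ of
$$
\phi_\epsilon=\tfrac12\eta^2|D^2v_\epsilon|^2+\eta\lambda\beta_\epsilon(|Dv_\epsilon|^2-1)+\tfrac\mu2|Dv_\epsilon|^2,
$$
take $\lambda=2M_\epsilon$, so that the term $4\beta_\epsilon''\,|D^2v_\epsilon Dv_\epsilon|^2\,\eta\{\lambda-\eta|D^2v_\epsilon|\}$ is nonnegative and may be discarded along with $\eta^2|D^3v_\epsilon|^2$. The crucial observation is that the $|D^2v_\epsilon|^2$-term occurring \emph{outside} the bracket multiplying $\beta_\epsilon'$ still carries a coefficient independent of $N$ (it comes only from $D\eta$, from $D^2f$, and from $\delta\eta^2|D^2v_\epsilon|^2\le\eta^2|D^2v_\epsilon|^2$), so $\mu$ may still be fixed, depending only on $A$, and chosen larger than that coefficient. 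One then gets the same dichotomy as in the original proof: either the bracket multiplying $\beta_\epsilon'$ already forces $\eta(x_0)|D^2v_\epsilon(x_0)|\le C(A)\,N$ (using $\lambda=2M_\epsilon$; the case $M_\epsilon\le C(A)N$ being trivial), or that bracket is nonnegative, is discarded, and the tracked form of \eqref{D2vEstimate3} forces $|D^2v_\epsilon(x_0)|^2\le C(A)\,N$. In either case $\eta(x_0)|D^2v_\epsilon(x_0)|\le C(A)\,N$. Feeding this back into $\tfrac12 M_\epsilon^2\le\max_{\overline B}\phi_\epsilon=\phi_\epsilon(x_0)$, and using the tracked $\beta_\epsilon$ bound, $G\le N$, and $\lambda=2M_\epsilon$, gives
$$
M_\epsilon^2\le C(A)\,N^2+C(A)\,N\,M_\epsilon,
$$
a quadratic inequality whose solution is linear in $N$, so $M_\epsilon\le C'(A)\,N$. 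Since $\eta$ is an arbitrary cutoff, taken $\equiv 1$ on $A$, this is \eqref{CRUCIALest}.

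The only genuine obstacle is homogeneity: one must confirm that carrying the factor $N$ through the original computation neither reverses the sign of any ``good'' term (it does not, since $N\ge 1$ merely rescales positive quantities) nor turns the closing estimate superlinear in $N$. The latter is where the bookkeeping must be honest --- specifically, that the $|D^2v_\epsilon|^2$-coefficient governing the choice of $\mu$ is $N$-free (so $\mu$ stays fixed), and that $R$ and $G$ enter the final quadratic inequality for $M_\epsilon$ only through the terms $N^2$ and $N\,M_\epsilon$. Everything else is a mechanical reprise of the computations already carried out, with each ``$C$'' read as ``$C(A)\,N$''.
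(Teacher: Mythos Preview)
Your proposal is correct and is exactly the approach the paper takes: the paper offers no separate argument for this corollary, merely stating that ``a close inspection of the above proof of Theorem~\ref{D2Vbounds} reveals'' the estimate, which is precisely the bookkeeping exercise you outline. Your careful tracking of which constants carry a factor of $N=1+|\delta v_\epsilon|_{L^\infty(B)}+|Dv_\epsilon|^2_{L^\infty(B)}$ and, crucially, your observation that the $|D^2v_\epsilon|^2$-coefficient governing the choice of $\mu$ is $N$-free, supply the details the paper leaves implicit.
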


The above estimates will now be used to establish Proposition \ref{Uniform2ndDer}. To this end, we first show here that there is a subsequence of $\epsilon\rightarrow 0^+$
such that $v_\epsilon\rightarrow u_\delta$ in $C^1_{\text{loc}}(B)$, where $u_\delta$ is the solution of \eqref{deltaPDE}. By the local, pointwise
estimates that we have established for $D^2v_\epsilon$, this convergence would imply $u_\delta\in W^{2,\infty}_{\text{loc}}(B).$
Then we use \eqref{CRUCIALest} to establish the inequality \eqref{FinalBound}.




\begin{proof} (of Proposition \eqref{Uniform2ndDer}) 1. So far we have established that there is a constant $C>0$ such that 
$$
|v_\epsilon|_{W^{1,\infty}(B)}\le C, \quad \epsilon \in (0,1),
$$
and for each $B'\subset B$, there is a constant $C'$ such that 
$$
|v_\epsilon|_{W^{2,\infty}(B')}\le C', \quad \epsilon \in (0,1).
$$
We claim that there is a function $v\in W^{1,\infty}(B)\cap W^{2,\infty}_{\text{loc}}(B)$ and a 
sequence of $\epsilon$ tending to 0 such that  as $\epsilon \rightarrow 0$
\begin{equation}
\begin{cases}
v_\epsilon\rightarrow v \quad \text{uniformly in $\overline{B}$}\\
v_\epsilon\rightarrow v \quad \text{in $C^1_{\text{loc}}(B)$}
\end{cases}.\footnote{That is, $v_\epsilon\rightarrow v$ uniformly in $\overline{B}$ and $v_\epsilon\rightarrow v$ in $C^1(B')$ for each $B'\Subset B$ through a sequence of $\epsilon\rightarrow 0.$}
\end{equation}

\par  Set
$$
B_{j}=\left\{x\in B: \text{dist}(x,\partial B)\ge \frac{1}{j}\right\}, \quad j\in \N.
$$
and observe that the sequence of compact sets $B_j$ is increasing and $B=\cup_{j\in \N}B_{j}$. Without loss of generality suppose $B_1\neq \emptyset$.  The above estimates and the Arzel\`{a}-Ascoli Theorem imply that there is a function $v_1\in W^{1,\infty}(B)\cap W^{2,\infty}(B_1)$ and a sequence $\epsilon^0_k\rightarrow 0$ as $k\rightarrow \infty$ such that $v_{\epsilon^0_k}\rightarrow v_1$ uniformly in $\overline{B}$ and $v_{\epsilon^0_k}\rightarrow v_1$ in $C^1(B_1)$ as $k\rightarrow \infty$.  

\par The uniform bounds we have on the $W^{2,\infty}(B_2)$ norm of the sequence $v_{\epsilon^0_k}$ implies again with the Arzel\`{a}-Ascoli Theorem that there is a function $v_2\in W^{1,\infty}(B)\cap W^{2,\infty}(B_2)$ and a sub-sequence $(\epsilon^1_k)_{k\ge 1}$ of $(\epsilon^0_k)_{k\ge 1}$ such that $v_{\epsilon^1_k}\rightarrow v_2$ uniformly in $\overline{B}$ and $v_{\epsilon^1_k}\rightarrow v_2$ in $C^1(B_2)$ as $k\rightarrow \infty$. By induction, we have for each $j\in \N$, there is a function $v_j\in W^{1,\infty}(B)\cap W^{2,\infty}(B_j)$ and a sub-sequence $(\epsilon^j_k)_{k\ge 1}$ of $(\epsilon^{j-1}_k)_{k\ge 1}$ such that $v_{\epsilon^{j}_k}\rightarrow v_j$ uniformly in $\overline{B}$ and $v_{\epsilon^j_k}\rightarrow v_j$ in $C^1(B_j)$ as $k\rightarrow \infty$.

\par   The diagonal sequence $(v_{\epsilon^k_k})_{k\in\N}$ is a subsequence of each $(v_{\epsilon^j_k})_{k\in\N}$ with $j$ fixed. Hence, this diagonal sequence converges uniformly on $\overline{B}$ to some $v\in W^{1,\infty}(\overline{B})$. Fix any $B'\Subset B$, and note that $B'\subset B_j$ for $j$ fixed and large enough. $(v_{\epsilon^k_k})_{k\in\N}$ being a subsequence of $(v_{\epsilon^j_k})_{k\in\N}$ converges in $C^1(B')\subset C^1(B_j)$ to $v$ as $k\rightarrow \infty.$

\par 2.  We now claim that $v$ is a viscosity solution of \eqref{deltaPDE} and therefore has to coincide with $u_\delta$, the unique viscosity solution of the PDE

$$
\begin{cases}
\max\left\{\delta v -\Delta v -f, |Dv|-1\right\}=0, \quad x\in B\\
\hspace{2in}v=u_\delta, \quad x\in \partial B
\end{cases}.
$$
Suppose that
$v-\varphi $ has a local maximum at $x_0\in B$ and that $\varphi\in C^2(B)$. We must show 

\begin{equation}\label{vviscsub}
\max\left\{\delta v(x_0)-\Delta\varphi(x_0) - f(x_0), |D\varphi(x_0)|-1 \right\}\le 0.
\end{equation}
By adding $x\mapsto \frac{\rho}{2} |x-x_0|^2$ to $\varphi$ and later sending $\rho\rightarrow 0$, we may assume that $v-\varphi $ has a {\it strict} local maximum.  Since, $v_{\epsilon_k}$ 
converges to $v$ uniformly (for some sequence $\epsilon_k\rightarrow 0$) as $k\rightarrow \infty$, there is a sequence of $x_k$ such that

$$
\begin{cases}
x_k\rightarrow x_0,\quad \text{as}\; k\rightarrow \infty\\
v_{\epsilon_k}-\varphi \;\;\text{has a local maximum at $x_k$}
\end{cases}.
$$
Since $v_\epsilon$ is a smooth solution of \eqref{betaEq}, we have 
$$
\delta v_{\epsilon_k}(x_k)- \delta\varphi(x_k) + \beta_\epsilon(|D\varphi(x_k)|^2-1)\le f(x_k).
$$
Since, $\beta_\epsilon \ge 0$, we can send $k\rightarrow \infty$ to arrive at
$$
\delta v(x_0)- \Delta\varphi(x_0)\le f(x_0). 
$$
By Theorem \eqref{D2Vbounds},
$$
0\le  \beta_\epsilon(|D\varphi(x_k)|^2-1)=\beta_\epsilon(|Dv_{\epsilon_k}(x_k)|^2-1)\le C.
$$
which implies that when $k\rightarrow \infty$
$$
|D\varphi(x_0)|^2-1\le 0.
$$
Thus, \eqref{vviscsub} holds. 

\par Now suppose that 
$v-\psi $ has a (strict) local minimum at $x_0\in B$ and that $\psi\in C^2(B)$. We must show 

\begin{equation}\label{vviscsup}
\max\left\{\delta v(x_0)- \Delta\psi(x_0) - f(x_0), |D\psi(x_0)|-1 \right\}\ge 0.
\end{equation}
Arguing as above, we discover there is a sequence $\epsilon_k\rightarrow 0$ as $k\rightarrow \infty$, and $x_k$ such that
$$
\begin{cases}
x_k\rightarrow x_0,\quad \text{as}\; k\rightarrow \infty\\
v_{\epsilon_k}-\psi \;\;\text{has a local minimum at $x_k$}
\end{cases}.
$$
If 
$$
|Dv(x_0)|^2\ge 1,
$$
then \eqref{vviscsup} holds.  Suppose now that 
$$
|Dv(x_0)|^2< 1.
$$
Since $v_\epsilon$ is a smooth solution of \eqref{betaEq}, we have 
\begin{equation}\label{LastStepPenalization}
\delta v_{\epsilon_k}(x_k)- \Delta\psi(x_k) + \beta_\epsilon(|D\psi(x_k)|^2-1)- f(x_k)\ge 0.
\end{equation}
By the convergence established in the first part of this proof, $|Dv_{\epsilon_k}(x_k)|<1$ for all $k$ sufficiently large. Hence,
$$
\lim_{k\rightarrow \I}\beta_\epsilon(|D\psi(x_k)|^2-1)=0. 
$$
With the above limit, we can pass to the limit \eqref{LastStepPenalization} to get
$$
\delta v(x_0)- \Delta\psi(x_0) - f(x_0)\ge 0
$$
and thus  \eqref{vviscsup} holds in this case, too. 
 
\par 3.  From \eqref{CRUCIALest}, we have that for $x,y\in B'\Subset B$, there is a constant $C'$ such that
$$
|Dv_{\epsilon_k}(x) - Dv_{\epsilon_k}(y)|\le C'\left(1+|\delta v_{\epsilon_k}|_{L^\infty(B)} +|Dv_{\epsilon_k}|^2_{L^\infty(B)} \right)|x-y|
$$
for all $k$ sufficiently large.  As $v_{\epsilon_k}\rightarrow u_\delta $ in $C^1_{ \text{loc} }(B)$ and as
$$
|\delta u_\delta|_{L^\infty(B)}+|Du_\delta|^2_{L^\infty(B)}\quad \text{is bounded for $0<\delta \le 1$},
$$
we let $k\rightarrow \infty$ to discover that there is a constant $L'=L'(B')$ such that 
$$
|Du_{\delta}(x) - Du_{\delta}(y)|\le L'|x-y|, \quad x,y\in B'.
$$
\end{proof}

\subsection{Passing to the limit}\label{pass2lim}
We now have the following estimates on $u_\delta$ $(0<\delta \le 1)$

$$
\begin{cases}
(|x| - K)^+\le u_\delta(x)\le \frac{K}{\delta} + |x|, \quad x\in \R^n\\
|Du_\delta(x)|\le 1, \quad x\in \R^n\\
|Du_\delta(x)-Du_\delta(y)|\le L|x-y|, \; x,y\in \R^n. 
\end{cases}
$$
Our aim is to pass to limit as $\delta\rightarrow 0^+$ and prove there is an eigenvalue $\lambda^*$ as stated in Theorem \ref{mainthm}. To this end, 
we define 
$$
\begin{cases}
\lambda_\delta:=\delta u_\delta(x_\delta)\\
v_\delta(x):=u_\delta(x)-u_\delta(x_\delta)\\
\end{cases}
$$
where $x_\delta$ is a global minimizer of $u_\delta$. Of course $Du_\delta(x_\delta)=0$, and in particular $x_\delta\in \Omega_\delta$. Moreover, 
Corollary \ref{technicalOmega} asserts that $|x_\delta|\le C$ for some constant $C$ independent of all $0<\delta \le 1$.

\par For this constant $C$, we have that
$$
0\le \lambda_\delta \le K+C
$$ 
and that $v_\delta $ satisfies
$$
\begin{cases}
|v_\delta(x)|\le |x|+C\\
|Dv_\delta(x)|\le 1\\
|Dv_\delta(x)-Dv_\delta(y)|\le L|x-y|
\end{cases},
$$
for all $x,y\in  \R^n$, $0<\delta<1$.  We will now use the above estimates to prove the following lemma. 


\begin{lem}\label{CONLEMMA}
There is a sequence $\delta_k>0$ tending to $0$ as $k\rightarrow \infty$, $\lambda^*\in \R$, and $u^*\in C^{1,1}(\R^n)$ such 
that 
\begin{equation}\label{vEst}
\begin{cases}
\lambda^*=\lim_{k\rightarrow \infty}\lambda_{\delta_k}\\
v_{\delta_k}\rightarrow u^* \;\text{in}\; C^1_{\text{loc}}(\R^n), \;\text{as}\; k\rightarrow \infty
\end{cases}.
\end{equation}
Moreover,  $u^*$ is a solution of \eqref{lamPDE} satisfying the growth condition \eqref{ugrowth} with eigenvalue $\lambda^*$ and 
\begin{equation}\label{ustarEst}
0\le D^2u^*\le L, \quad \text{a.e.}\; x\in \R^n.
\end{equation}
\end{lem}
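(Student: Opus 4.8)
\emph{Proof plan.} The lemma is a compactness-and-stability statement resting on the uniform bounds already established; the analytic work is done, and what remains is to extract subsequences, pass to the limit in the PDE, and propagate the growth and second-derivative estimates to the limit.

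The first step is the extraction of $\lambda^*$ and $u^*$. Since $0\le\lambda_\delta\le K+C$, there is a sequence $\delta_k\downarrow 0$ with $\lambda_{\delta_k}\to\lambda^*\in[0,K+C]$. The family $\{v_\delta\}$ is uniformly locally bounded (by $|x|+C$), is $1$-Lipschitz, and the gradients $\{Dv_\delta\}$ are globally $L$-Lipschitz; applying Arzel\`a--Ascoli to $(v_{\delta_k})$ and to $(Dv_{\delta_k})$ on an exhausting sequence of balls and passing to a diagonal subsequence (still written $\delta_k$), I obtain a function $u^*$ with $v_{\delta_k}\to u^*$ and $Dv_{\delta_k}\to Du^*$ locally uniformly. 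Thus $v_{\delta_k}\to u^*$ in $C^1_{\text{loc}}(\R^n)$, and $u^*$ is $1$-Lipschitz with $L$-Lipschitz gradient, hence $u^*\in C^{1,1}(\R^n)$.

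Next I would transfer the two quantitative bounds. For \eqref{ustarEst}: since $v_\delta$ and $u_\delta$ differ by a constant, the uniform estimate \eqref{FinalBound} gives that both $v_{\delta_k}$ and $x\mapsto\tfrac L2|x|^2-v_{\delta_k}(x)$ are convex; convexity passes to the local uniform limit, so $0\le D^2u^*\le L$ a.e. For \eqref{ugrowth}: here I would use the Lipschitz-extension formula \eqref{LipExtUdelta} together with the fact that $\Omega_\delta$ lies in a fixed ball $B_C(0)$ for all $0<\delta\le1$ and that $x_\delta\in\Omega_\delta$, so $|x_\delta|\le C$. For $|x|>C$ one may write $u_\delta(x)=u_\delta(y_x)+|x-y_x|$ with $y_x\in\partial\Omega_\delta$, $|y_x|\le C$; then $v_\delta(x)=v_\delta(y_x)+|x-y_x|$ with $0\le v_\delta(y_x)\le|y_x-x_\delta|\le 2C$, whence $|x|-C\le v_\delta(x)\le|x|+3C$, uniformly in $\delta$. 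Letting $k\to\infty$ gives the same sandwich for $u^*$, so $u^*(x)/|x|\to 1$ as $|x|\to\infty$.

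Finally, $u^*$ solves \eqref{lamPDE} with eigenvalue $\lambda^*$ by the standard stability argument for viscosity solutions: $v_\delta$ is a viscosity solution of $\max\{\delta v_\delta+\lambda_\delta-\Delta v_\delta-f,\,|Dv_\delta|-1\}=0$, so if $u^*-\varphi$ has a strict local maximum (resp.\ minimum) at $x_0$ with $\varphi\in C^2$, then $v_{\delta_k}-\varphi$ has a local maximum (resp.\ minimum) at some $x_k\to x_0$, and evaluating the subsolution (resp.\ supersolution) inequality at $x_k$, using that $\delta_k v_{\delta_k}(x_k)\to0$ (boundedness of $v_{\delta_k}$ near $x_0$) and $\lambda_{\delta_k}\to\lambda^*$, and sending $k\to\infty$ yields the required inequality for $u^*$ at $x_0$. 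I do not anticipate a genuine obstacle: the only place where more than soft compactness enters is the growth estimate, and there all the effort lies in invoking the earlier uniform boundedness of $\Omega_\delta$ and the Lipschitz-extension formula; the remaining care is purely bookkeeping (the diagonal subsequence and the direction of the inequalities in the stability step).
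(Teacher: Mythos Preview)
Your proposal is correct and follows essentially the same approach as the paper: compactness plus a diagonal argument for the convergence \eqref{vEst}, stability of viscosity solutions for the PDE, the Lipschitz-extension formula together with the uniform bound on $\Omega_\delta$ for the lower growth estimate, and passage to the limit in the second-derivative bounds for \eqref{ustarEst}. If anything, you spell out in more detail what the paper dismisses as ``routine compactness and diagonalization arguments'' and ``immediate from the definition.''
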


\begin{proof}
Routine compactness and diagonalization arguments establishes the convergence \eqref{vEst}; similar arguments were used to prove Proposition \ref{Uniform2ndDer}. The estimate
\eqref{ustarEst} is also immediate from this convergence.

\par It is immediate from the definition that viscosity solutions pass to the limit under local uniform convergence. It follows that $u^*$ satisfies the PDE
$$
\max\{\lambda^* -\Delta u^* - h, |Du^*|-1\}=0, \quad x\in \R^n
$$
in the sense of viscosity solutions.  As $|u^*(x)|\le|x|+C$ for all $x\in \R^n$, 
$$
\limsup_{|x|\rightarrow \infty}\frac{u^*(x)}{|x|}\le 1.
$$
By the Lipschitz extension formula \eqref{LipExtUdelta} and Corollary \ref{technicalOmega}, we also have for all $|x|$ sufficiently large,
$$
v_{\delta}(x)=u_\delta(x)-u_\delta(x_\delta)\ge |x| -C
$$
for some $C$ independent of $0<\delta\le 1$. Thus,

$$
\liminf_{|x|\rightarrow \infty}\frac{u^*(x)}{|x|}\ge 1,
$$
and so $u^*$ satisfies the growth rate \eqref{ugrowth}.
\end{proof}
Note that for any $x_0\in \R^n$
$$
\delta u_\delta(x_0)=\delta u_\delta(x_\delta)+\delta (u_\delta(x_0)-u_\delta(x_\delta))=\delta u_\delta(x_\delta)+o(1)
$$
as $\delta\rightarrow 0^+.$ This follows since $|x_\delta|$ is bounded uniformly for all small $\delta>0$ and thus
$$
|\delta (u_\delta(x_0)-u_\delta(x_\delta))|\le \delta|x_\delta -x_0|\le C\delta, \quad 0<\delta\le1.
$$
Therefore, Lemma \ref{CONLEMMA} implies part $(iii)$ of Theorem \ref{deltathm}, and this fact with the comparison principle for eigenvalues (Proposition \ref{comparison}) proves
Theorem \ref{mainthm}.


\begin{rem}
We emphasize that main point to establishing a uniform second derivative estimate on $u_\delta$ was to show that $u_\delta-u_\delta(x_0)$ converges to 
to $u^*$ in $C^1_\text{loc}(\R^n)$ through some sequence of $\delta$ tending to $0$.  Uniform convergence would have followed without this estimate as
we have uniform Lipschitz estimates on $u_\delta.$
\end{rem}

\begin{rem}
As we established for $u_\delta$, $u^*$ is its own Lipschitz extension
$$
u^*(x)=\min_{y\in \overline{\Omega_0}}\left\{u^*(y) + |x-y|\right\}, \quad x\in \R^n
$$
where $\Omega_0=\{x\in \R^n: |Du^*(x)|<1\}$.  Therefore,  it suffices only to know $u^*$
within $\Omega_0$ to know it everywhere in space. 
\end{rem}

\section{Min-max formula}\label{MinMaxForm}
We conclude this work by proving Theorem \ref{MinMaxThm} which is an alternative, min-max characterization of the eigenvalue $\lambda^*.$   To this end, we recall formula \eqref{maxform}
\begin{eqnarray}
\lambda^*&=&\sup\text{{\huge\{} } \lambda \in\R: \text{there exists a subsolution $u$ of \eqref{lamPDE} with eigenvalue $\lambda$}, \nonumber \\
&& \hspace{2in}\left. \text{satisfying $\limsup_{|x|\rightarrow \infty}\frac{u(x)}{|x|}\le 1$.}  \right\}\nonumber
\end{eqnarray}
and formula \eqref{minform}
\begin{eqnarray}
\lambda^*&=&\inf\text{{\huge\{} } \mu \in\R: \text{there exists a supersolution $v$ of \eqref{lamPDE} with eigenvalue $\mu$}, \nonumber \\
&& \hspace{2in}\left. \text{satisfying $\liminf_{|x|\rightarrow \infty}\frac{v(x)}{|x|}\ge 1$.}  \right\},\nonumber
\end{eqnarray}
\noindent which are now consequences of Theorem \ref{mainthm} and the comparison principle established in Proposition \ref{comparison}.  Our goal is to use the above equalities to show
$$
\lambda_-=\lambda^*\le \lambda_+,
$$
where
$$
\lambda_-:=\sup\left\{\inf_{x\in\R^n}\left\{\Delta \phi(x) + f(x)\right\} :  \phi\in C^2(\R^n), |D\phi|\le 1 \right\}
$$ 
and
$$
\lambda_+:=\inf\left\{\sup_{|D\psi(x)|<1}\left\{\Delta \psi(x) + f(x)\right\} :  \psi\in C^2(\R^n), \liminf_{|x|\rightarrow \infty}\frac{\psi(x)}{|x|}\ge 1 \right\}.
$$
\begin{proof} (of Theorem \ref{MinMaxThm})  1. $(\lambda^*=\lambda_-)$ For $\phi\in C^2(\R^n)$ with $|D\phi|\le 1$, set
$$
\mu^\phi:=\inf_{x\in\R^n}\left\{\Delta \phi(x) + f(x)\right\}.
$$
If $\mu^\phi=-\infty$, then $\mu^\phi\le \lambda^*$.  If $\mu^\phi>-\infty$, then
$$
\max\{\mu^\phi - \Delta\phi(x)-f(x), |D\phi(x)|-1\}\le 0, \; x\in \R^n.
$$
Equality \eqref{maxform} implies $\mu^\phi\le \lambda^*$. Consequently, $\lambda_-=\sup\mu^\phi\le \lambda^*.$  

\par Now let $u^*$ be a convex, $C^{1,1}(\R^n)$ solution associated to $\lambda^*$ and $u^\epsilon:=\eta^\epsilon*u^*$ be the standard mollification 
of $u^*$ for $\epsilon>0$. Note that as $|Du^*|\le 1$ and $0\le D^2u^*\le L$, we have 
$$
|Du^\epsilon|\le 1 \quad \text{and} \quad 0\le D^2u^\epsilon \le L, \quad\text{for all} \quad \epsilon>0.
$$
Also note that as $u^*\in C^{1,1}$
$$
\Delta u^\epsilon =\eta^\epsilon*\Delta u^*\ge \lambda^* - f^\epsilon,
$$
where $f^\epsilon$ is the standard mollification of $f$.  

\par As $f$ grows superlinear and $D^2u^*$ is bounded, there is $R>0$ such that 
$x\mapsto \Delta u^\epsilon(x) + f(x)$ achieves its minimum value for an $x\in B_R$, for all $\epsilon>0$.   Hence, as $\epsilon\rightarrow 0^+$
 
 \begin{eqnarray}
 \lambda^* &\le & \inf_{|x|\le R}\left\{\Delta u^\epsilon(x) + f^\epsilon(x)\right\}\nonumber\\
& \le &\inf_{|x|\le R}\left\{\Delta u^\epsilon(x) + f(x)\right\} + o(1) \nonumber\\
& \le &\inf_{x\in \R^n}\left\{\Delta u^\epsilon(x) + f(x)\right\} + o(1) \nonumber\\
&\le & \lambda_- + o(1) \nonumber.
 \end{eqnarray}
 
 \par 2. $(\lambda^*=\lambda_+)$  Assume that $\psi \in C^2(\R^n)$ and that $\liminf_{|x|\rightarrow \infty}\psi(x)/|x|\ge 1.$  Similar to 
 our argument above, we set
 $$
 \tau^\psi:=\sup_{|D\psi(x)|< 1}\left\{\Delta \psi(x) + f(x)\right\}.
 $$
 If $\tau^\psi=+\infty$, then $\lambda^*\le  \tau^\psi.$ If $\tau^\psi<\infty$, then 
 $$
 \max\{ \tau^\psi- \Delta\psi(x) -f(x), |D\psi(x)|-1\}\ge 0, \; x\in \R^n.
 $$
 Equality \eqref{minform} implies $\lambda^*\le  \tau^\psi.$ Consequently, $\lambda^*\le \inf \tau^\psi=\lambda_+.$ This proves $(i)$.  
 
 \par If there is a $C^2(\R^n)$ supersolution $\psi^*$ of \eqref{lamPDE} with eigenvalue $\lambda^*$, such that $$\liminf_{|x|\rightarrow \infty}\frac{\psi^*(x)}{|x|}\ge 1,$$ then
 $$
 \lambda_+\le \sup_{|D\psi^*(x)|<1}\left\{\Delta \psi^*(x) + f(x)\right\}\le \lambda^*
 $$
 which proves assertion $(ii)$.
\end{proof}
We believe that the assumption on the existence of $\psi^*$ is not needed.  Our intuition is that the solution $u^*$ we constructed in Lemma \ref{CONLEMMA} is twice continuously 
differentiable on the set of points that $|Du^*|<1$, and therefore, should 
be amongst the class of $\psi$ in the infimum defining $\lambda_+$; in this case 
$$
\lambda^*= \sup_{|Du^*(x)|<1}\left\{\Delta u^*(x) + f(x)\right\}\ge \lambda_+.
$$
 
\begin{conj}
$\lambda^*=\lambda_+.$
\end{conj}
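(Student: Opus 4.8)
Since part (i) of Theorem~\ref{MinMaxThm} already gives $\lambda^*\le\lambda_+$, the conjecture is the reverse inequality $\lambda_+\le\lambda^*$, and the plan is to prove the stronger regularity statement that the solution $u^*$ produced in Lemma~\ref{CONLEMMA} belongs to $C^2(\R^n)$. Granting this, $u^*$ is a $C^2(\R^n)$ supersolution (in fact solution) of \eqref{lamPDE} with eigenvalue $\lambda^*$ and $\liminf_{|x|\to\infty}u^*(x)/|x|=1\ge1$, so part (ii) of Theorem~\ref{MinMaxThm} applies with $\psi^*=u^*$ and yields $\lambda^*=\lambda_+$; equivalently, $u^*$ is then admissible in the infimum defining $\lambda_+$, and since $\Omega_0:=\{x:|Du^*(x)|<1\}$ is nonempty (it contains every minimizer of $u^*$) and, by passing to the limit in the identity $\delta u_\delta-\Delta u_\delta=f$ valid on $\Omega_\delta$ followed by interior Schauder estimates, one has $u^*\in C^\infty(\Omega_0)$ with $\Delta u^*+f\equiv\lambda^*$ there, it follows that
$$
\lambda_+\le\sup_{|Du^*(x)|<1}\{\Delta u^*(x)+f(x)\}=\sup_{x\in\Omega_0}\{\Delta u^*(x)+f(x)\}=\lambda^*.
$$

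It remains to explain how I would prove $u^*\in C^2(\R^n)$. Away from the free boundary $\partial\Omega_0$ there is nothing to do: $u^*\in C^\infty(\Omega_0)$ as above, while on $\R^n\setminus\overline{\Omega_0}$ one has $|Du^*|\equiv1$ and the Lipschitz extension formula $u^*(x)=\min_{y\in\partial\Omega_0}\{u^*(y)+|x-y|\}$, which — since $u^*\in C^{1,1}(\R^n)$ carries no gradient ridges — is smooth near $\partial\Omega_0$ once $\partial\Omega_0$ is regular, and there the one-sided Hessian annihilates $Du^*$ because $D|Du^*|^2=0$. The first step is therefore to establish regularity of $\partial\Omega_0$: via the Lipschitz extension formula the constraint $|Du|\le1$ is equivalent to a pointwise obstacle on the coincidence set, so $u_\delta$ (and in the limit $u^*$) solves an obstacle-type variational inequality to which the classical theory applies, giving $\partial\Omega_0\in C^{1,\alpha}$ and then, by bootstrapping, higher regularity, once a nondegeneracy estimate for $u^*$ near $\partial\Omega_0$ is available — I would try to extract this from the uniform second-derivative bounds of Proposition~\ref{Uniform2ndDer} and Corollary~\ref{goodD2Vbounds}. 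Granting that $\partial\Omega_0$ is smooth, the continuity of $D^2u^*$ across it is obtained by a several-variables version of the computation behind Theorem~\ref{RotReg}: at $x_0\in\partial\Omega_0$, with $H^-$, $H^+$ the one-sided Hessians from inside and outside $\Omega_0$, one has $\tr H^-=\lambda^*-f(x_0)$ (the equation in $\Omega_0$, up to the boundary), $\tr H^+\ge\lambda^*-f(x_0)$ (the viscosity subsolution inequality for \eqref{lamPDE} read through $\overline J^{2,+}u^*(x_0)$ using twice-differentiable points just outside $\overline{\Omega_0}$), $H^+Du^*(x_0)=0$, and $H^-$, $H^+$ agree on $(Du^*(x_0))^{\perp}$ (restricted to the level set $\{u^*=u^*(x_0)\}$ both equal its second fundamental form, a one-sided-independent quantity); these give $\tr H^-=\tr H^+ + H^-Du^*(x_0)\cdot Du^*(x_0)$, hence $H^-Du^*(x_0)\cdot Du^*(x_0)\le0$, hence $=0$ by convexity, hence $H^-Du^*(x_0)=0$ and so $H^-=H^+$. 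It may be cleaner to run this last step at the level of $u_\delta$, or of the genuinely smooth penalized solutions $v_\epsilon$ of Theorem~\ref{D2Vbounds}, and pass to the limit using \eqref{CRUCIALest}.

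I expect the main obstacle to be precisely the free boundary regularity — showing $\partial\Omega_0$ is smooth (or even $C^{1,1}$) when $n\ge2$ — since without it the notions of normal direction, level set, and one-sided Hessian used in the matching step are not well defined, and the reduction to an obstacle problem requires a nondegeneracy property of $u^*$ at $\partial\Omega_0$ that does not follow formally from the $C^{1,1}$ bound. I would also flag, as a caution, that the naive shortcut of mollifying $u^*$ fails: for $u^\epsilon=\eta^\epsilon*u^*$ the set $\{|Du^\epsilon|<1\}$ is generically all of $\R^n$ (averaging the nonconstant unit field $Du^*$ strictly shortens it), so $\sup_{|Du^\epsilon|<1}\{\Delta u^\epsilon+f\}=+\infty$ and $u^\epsilon$ is useless in the infimum defining $\lambda_+$; any workable competitor must preserve the eikonal relation $|D\psi|=1$ outside a bounded set, which is exactly what genuine $C^2$ regularity of $u^*$ supplies.
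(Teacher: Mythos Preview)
The statement you are addressing is labeled a \emph{Conjecture} in the paper and is \emph{not} proved there; the paper offers only the heuristic remark that $u^*$ ought to be twice continuously differentiable on $\Omega_0=\{|Du^*|<1\}$ and hence admissible in the infimum defining $\lambda_+$, after which the conjecture is stated and the paper ends. There is therefore no proof in the paper to compare your proposal against.

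Your outline is consistent with, and considerably more detailed than, the paper's stated intuition: you correctly observe that admissibility in $\lambda_+$ requires $\psi\in C^2(\R^n)$, not merely $C^2(\Omega_0)$, so one must either prove full $C^2$ regularity of $u^*$ or construct a suitable $C^2$ competitor; you propose a free-boundary approach to the former and sketch a Hessian-matching argument across $\partial\Omega_0$ that generalizes the one-dimensional computation behind Theorem~\ref{RotReg}. You also correctly identify the essential obstruction --- regularity of $\partial\Omega_0$ and the nondegeneracy needed to invoke obstacle-problem theory --- and your warning that mollifying $u^*$ destroys the eikonal condition and hence yields a useless competitor for $\lambda_+$ is a genuine point not made in the paper. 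In short, your proposal is a reasonable research program toward the conjecture, with the hard step honestly flagged as open; it is not a proof, but neither does the paper claim one.
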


\noindent {\bf Acknowledgement}: I am indebted to Scott Armstrong for showing me the test function $\overline{u}$ defined in Lemma \ref{sarm}.

\appendix

\newpage


\end{document}